\begin{document}
\newcommand{\Chi}{\raisebox{2pt}{\ensuremath{\chi}}}
\def\C{\mathbb{C}}
\def\N{\mathbb{N}}
\def\Z{\mathbb{Z}}
\def\R{\mathbb{R}}
\def\T{\mathbb{T}}
\def\supp{\operatorname{supp}}
\def\Aut{\operatorname{Aut}}
\def\End{\operatorname{End}}
\def\Ad{\operatorname{{Ad}}}
\def\Tr{\operatorname{Tr}}
\def\id{\operatorname{id}}
\def\lsp{\operatorname{span}}
\def\clsp{\operatorname{\overline{span}}}
\def\To{\mathcal{T}}

\def\H{\mathcal{H}}
\def\B{\mathcal{B}}
\def\F{\mathcal{F}}

\def\K{\mathcal{K}}
\def\TT{\mathcal{T}}
\def\QQ{\mathcal{Q}}
\def\AA{\mathcal{A}}
\def\EE{\mathcal{E}}
\def\L{\mathcal{L}}
\def\O{\mathcal{O}}
\def\OO{\mathcal{O}}
\renewcommand{\Chi}{\raisebox{2pt}{\ensuremath{\chi}}}
\newcommand{\under}{\backslash}
\newcommand{\qregular}{regular }

\renewcommand{\theenumi}{\alph{enumi}} 
\renewcommand{\theenumii}{\roman{enumii}}

\newtheorem{thm}{Theorem}  [section]
\newtheorem{cor}[thm]{Corollary}
\newtheorem{lemma}[thm]{Lemma}
\newtheorem{prop}[thm]{Proposition}
\theoremstyle{definition}
\newtheorem{defn}[thm]{Definition}
\newtheorem{remark}[thm]{Remark}
\newtheorem{example}[thm]{Example}
\newtheorem{keyexample}[thm]{Key Example}
\newtheorem{remarks}[thm]{Remarks}

\numberwithin{equation}{section}

\title[Exel and Stacey crossed products, and Cuntz-Pimsner algebras]
{\boldmath{Exel crossed products, Stacey crossed products,\\ and Cuntz-Pimsner algebras}}

\author[an Huef]{Astrid an Huef}

\author[Raeburn]{Iain Raeburn}
\address{Department of Mathematics and Statistics\\ University of Otago\\ PO Box 56\\Dunedin 9054\\ New Zealand}
\email{astrid@maths.otago.ac.nz, iraeburn@maths.otago.ac.nz}

\begin{abstract}
There are many different crossed products by an endomorphism of a $C^*$-algebra, and constructions by Exel and Stacey have proved particularly useful. Here we show that every Exel crossed product is isomorphic to a Stacey crossed product (though by a different endomorphism of a different $C^*$-algebra), that every Stacey crossed product is an Exel crossed product, and answer some questions raised by Ionescu and Muhly.

While this manuscript is not yet in its final form(s) several people have already used our results, so we are posting it now.
\end{abstract}

\subjclass[2000]{46L55}
\keywords{Exel crossed product, Stacey crossed product, Cuntz-Pimsner algebra, endomorphism, graph algebra, gauge action}
\date{6 July 2011}
\maketitle

\section{Introduction}
Everybody agrees that when $\alpha$ is an automorphism of a unital $C^*$-algebra $A$, the crossed product $A\rtimes_\alpha\Z$ is generated by a unitary element $u$ and a representation $\pi:A\to A\rtimes_{\alpha}\Z$ which satisfy the covariance relation
\begin{equation}\label{covauto}
\pi(\alpha(a))=u\pi(a)u^*\ \text{ for $a\in A$.}
\end{equation}
The covariance relation can be reformulated as $\pi(\alpha(a))u=u\pi(a)$ or $u^*\pi(\alpha(a))u=\pi(a)$, and, when $\alpha$ is an automorphism, these reformulations are equivalent to \eqref{covauto}. When $\alpha$ is an endomorphism, though, these reformulations are no longer equivalent, and give different crossed products. Thus there are several crossed products based on covariance relations in which $u$ is an isometry \cite{Mur, S, E}, and still more crossed products in which $u$ is a partial isometry \cite{Lin,ABL}. The crossed products constructed by Stacey \cite{S} and by Exel \cite{E} have proved to be particularly useful. 

Stacey's crossed product $A\times_\alpha \N$ is generated by an isometry $s$ and a representation $\pi$ of $A$ satisfying $\pi(\alpha(a))=s\pi(a)s^*$. His motivating example was the endomorphism $\alpha$ of the UHF core $A$ in the Cuntz algebra $\OO_n$ described by Cuntz in \cite{C}, for which we recover $\OO_n$ as $A\times_\alpha \N$ (see also  \cite{P, BKR}). Stacey's construction has been extended to semigroups of endomorphisms, and these semigroup crossed products were used to study Toeplitz algebras \cite{ALNR,LR1}; they have since been used extensively in the analysis of $C^*$-algebras arising in number theory (see \cite{LR2, Bren, L, Li, LNT}, for example).

Exel's construction depends on the choice of a transfer operator $L:A\to A$ for $\alpha$, which is a positive linear map satisfying $L(\alpha(a)b)=aL(b)$. He uses $L$ to build a Hilbert bimodule $M_L$ over $A$, and then his crossed product $A\rtimes_{\alpha,L}\N$ is closely related to the Cuntz-Pimsner algebra $\OO(M_L)$ of this bimodule (the precise relationship is described in \cite{BR}). The motivating example for Exel's construction is the endomorphism of $C(\{1,\cdots,n\}^\infty)$ induced by the backward shift, for which averaging over the $n$ preimages of each point gives a transfer operator $L$ such that $\OO_n\cong C(\{1,\cdots,n\}^\infty)\rtimes_{\alpha,L}\N$. More generally, Exel realised each Cuntz-Krieger algebra $\OO_A$ as a crossed product by the corresponding subshift of finite type, thereby giving a very direct proof that the Cuntz-Krieger algebra is determined up to isomorphism by the subshift. Exel's construction has attracted a good deal of attention in connection with irreversible dynamics \cite{EV, EaHR, CS}, and has also been extended to semigroups of endomorphisms with interesting consequences \cite{Lar, BaHLR}.

The purpose of this paper is to discuss some new relationships between the constructions of Stacey and Exel. On the face of it, their constructions are quite different, and are interesting for different classes of endomorphisms: for example, Stacey crossed products are not interesting for unital endomorphisms whereas Exel crossed products are. Nevertheless, we have noticed that $C^*$-algebras can have several different descriptions as crossed products by endomorphisms (as discussed for $\OO_n$ above). Our interest in this subject arose from recent work of Ionescu and Muhly \cite{IM} which describes two different realisations of a particular groupoid $C^*$-algebra $C^*(G)$ as Cuntz-Pimsner algebras. We recognised that both these Cuntz-Pimsner algebras are Exel crossed products, and asked ourselves whether there is a general mechanism at work. 

We think that we have found some interesting relationships between Exel crossed products and Stacey crossed products which explain the phenomenon we observed in \cite{IM}. Our results say that, modulo some minor extra hyotheses on $(A, \alpha, L)$:
\begin{itemize}
\item  Every relative Cuntz-Pimsner algebra of an Exel system is a Stacey crossed product (Theorem~\ref{relCPalg=St}, generalising \cite[Theorem~6.5]{E2}).  In particular, every Exel crossed product is a Stacey crossed product (Corollary~\ref{Ex=St2} and Theorem~\ref{CP=St}).
\item Every Stacey crossed product is an Exel crossed product (Theorem~\ref{extendExel}, generalising \cite[Theorem~4.7]{E}).
\end{itemize}
Combining them gives our explanation of the Ionescu-Muhly phenomenon concerning  the groupoid $C^*$-algebra $C^*(G)$ (see \S\ref{sec-IM}). Ionescu and Muhly also ask whether, with some additional hypotheses,  $C^*(G)$ is  isomorphic to a certain Stacey crossed product of multiplicity $n$;  in an appendix we give an example  where this is not the case.  

Exel crossed products have recently been used to model and study the $C^*$-algebras of directed graphs \cite{BRV}. Since allowing infinite graphs gave important extra generality for graph algebras, this has highlighted the need to study Exel crossed products for nonunital algebras, and we do this throughout. At the end, we apply our results to graph algebras, and find a new realisation of the graph algebra $C^*(E)$ as a Stacey crossed product $C^*(E)^\gamma\times_\beta\N$ by an endomorphism $\beta$ of the core, extending work of Kwa\'sniewski on finite graphs \cite{Kwa} (see \S\ref{ckex}). Our analysis requires a concrete description of the core in the $C^*$-algebra of a column-finite graph, which we provide in an appendix.


\section{Stacey crossed products}

Suppose that $\alpha $ is an endomorphism of a $C^*$-algebra $A$. A \emph{Stacey-covariant representation} of $(A,\alpha)$ in a $C^*$-algebra $B$ consists of a nondegenerate homomorphism $\pi:A\to B$ and an isometry $V\in M(B)$ such that $\pi(\alpha(a))=V\pi(a)V^*$. Stacey showed in \cite[\S3]{S} that there is a crossed product $A\times_\alpha\N$ which is generated by a universal Stacey-covariant representation $(i_A,v)$. If $(\pi,V)$ is a Stacey-covariant representation of $(A,\alpha)$ in $B$, then we write $\pi\times V$ for the nondegenerate homomorphism of $A\times_\alpha \N$ into $B$ such that $(\pi\times V)\circ i_A=\pi$ and $(\pi\times V)(v)=V$. (Stacey called $A\times_\alpha\N$  ``the multiplicity-one crossed product" of $(A,\alpha)$.) 

The crossed product $A\times_\alpha\N$ carries a dual action $\hat\alpha$ of $\T$, which is characterised by $\hat\alpha_z(i_A(a))=i_A(a)$ and $\hat\alpha_z(v)=zv$. The following ``dual-invariant uniqueness theorem'' says that this dual action identifies $A\times_\alpha\N$ among $C^*$-algebras generated by Stacey-covariant representations of $(A,\alpha)$. It was basically proved in \cite[Proposition~2.1]{BKR} (modulo the correction made in \cite{ALNR}).  

\begin{prop}\label{diut}
Suppose that $\alpha$ is an endomorphism of a $C^*$-algebra $A$, and $(\pi,V)$ is a Stacey-covariant representation of $(A,\alpha)$ in a $C^*$-algebra $D$. If $\pi$ is faithful and there is a strongly continuous action $\gamma:\T\to \Aut D$ such that $\gamma_z(\pi(a))=\pi(a)$ and $\gamma_z(V)=zV$, then $\pi\times V$ is faithful on $A\times_\alpha\N$.
\end{prop}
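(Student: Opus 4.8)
The plan is to run the standard gauge-invariant uniqueness argument. Write $\Phi:=\pi\times V\colon A\times_\alpha\N\to D$; as $\Phi$ is nondegenerate it extends to multiplier algebras, so $\Phi(v)=V$. (The hypotheses force $\alpha$ to be injective: if $\alpha(a)=\alpha(a')$ then $V\pi(a)V^*=V\pi(a')V^*$, and applying $V^*(\,\cdot\,)V$ and using $V^*V=1$ gives $\pi(a)=\pi(a')$, hence $a=a'$.) The first step is that $\Phi$ intertwines the dual action $\hat\alpha$ and $\gamma$: $\Phi(\hat\alpha_z(i_A(a)))=\pi(a)=\gamma_z(\pi(a))=\gamma_z(\Phi(i_A(a)))$ and $\Phi(\hat\alpha_z(v))=zV=\gamma_z(V)=\gamma_z(\Phi(v))$, and since $i_A(A)$ and $v$ generate $A\times_\alpha\N$ we get $\Phi\circ\hat\alpha_z=\gamma_z\circ\Phi$ for all $z\in\T$.

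Next, average over $\T$. Since $\T$ is compact and the actions are strongly continuous, $E(x)=\int_\T\hat\alpha_z(x)\,dz$ and $F(y)=\int_\T\gamma_z(y)\,dz$ define faithful conditional expectations of $A\times_\alpha\N$ onto $(A\times_\alpha\N)^{\hat\alpha}$ and of $D$ onto $D^\gamma$ --- faithful because if $E(x^*x)=0$ the nonnegative continuous function $z\mapsto\hat\alpha_z(x^*x)$ has zero integral, hence vanishes identically, so $x^*x=0$. Equivariance gives $\Phi\circ E=F\circ\Phi$, so $\Phi(x^*x)=0$ forces $\Phi(E(x^*x))=F(\Phi(x^*x))=0$; thus, if $\Phi$ is faithful on the fixed-point algebra, $E(x^*x)=0$ and so $x^*x=0$. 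So it suffices to prove that $\Phi$ is faithful on $(A\times_\alpha\N)^{\hat\alpha}$.

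Finally, analyse the fixed-point algebra. As $A\times_\alpha\N$ is densely spanned by products of elements of $i_A(A)\cup\{v,v^*\}$ and $\hat\alpha_z$ scales such a product by $z$ raised to the power (number of $v$'s minus number of $v^*$'s), $(A\times_\alpha\N)^{\hat\alpha}$ is the closed span of the balanced products. Using $v^*v=1$, the covariance relation $vi_A(a)v^*=i_A(\alpha(a))$ (so that $vi_A(a)=i_A(\alpha(a))v$ and $i_A(a)v^*=v^*i_A(\alpha(a))$), and the nondegeneracy of $i_A$ --- which, for an approximate unit $(e_\lambda)$ of $A$, gives $i_A(\alpha(e_\lambda))=vi_A(e_\lambda)v^*\to vv^*$ strictly and hence $vv^*i_A(a),\ i_A(a)vv^*\in i_A(A)$ --- one shows that $(A\times_\alpha\N)^{\hat\alpha}$ is generated by $i_A(A)$ together with the commuting range projections $v^kv^{*k}$ ($k\ge 0$), and reduces the faithfulness of $\Phi$ there to the faithfulness of $\pi$, using that $V$ is an isometry so that the projections $V^kV^{*k}$ are nonzero.

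I expect this last step to be the main obstacle, and the delicate point is the nonunital case: $v$ is only a multiplier of $A\times_\alpha\N$, the projections $v^kv^{*k}$ and the element $1-vv^*$ need not lie in $i_A(A)$, and one must argue carefully with multiplier algebras and strict limits. This is presumably where the argument of \cite[Proposition~2.1]{BKR} needed the correction recorded in \cite{ALNR}; everything else is routine.
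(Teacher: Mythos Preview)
Your outline follows exactly the strategy the paper uses: prove equivariance, reduce to the fixed-point algebra via the expectations obtained by averaging, and then show faithfulness on the core. Your direct argument with the faithful expectations $E$ and $F$ is precisely the content of \cite[Lemma~2.2]{BKR}, which the paper invokes rather than reproves; so on that step you are doing the same thing, just unpacked.

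The gap is in your final step. You assert that faithfulness on $(A\times_\alpha\N)^{\hat\alpha}$ ``reduces to the faithfulness of $\pi$, using that $V$ is an isometry so that the projections $V^kV^{*k}$ are nonzero,'' but this is not an argument --- nonvanishing of the $V^kV^{*k}$ is far from enough. An element of the core is (a limit of) a sum $\sum_{k=0}^n i_A(a_k)\,v^kv^{*k}\,i_A(b_k)$, and showing that $\Phi$ kills such a sum only when each piece vanishes requires a genuine induction on $n$ exploiting the decreasing chain $1\geq vv^*\geq v^2v^{*2}\geq\cdots$ together with the fact that $v^kv^{*k}i_A(A)v^kv^{*k}\subset i_A(A)$ (which you correctly note). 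The paper does not carry this out either: it cites the proof of \cite[Lemma~1.5]{ALNR}, observing that that proof needs neither unitality of $A$ nor the norm estimate (ii) of \cite[Theorem~1.2]{ALNR}. Your guess about where the nonunital subtlety lies is essentially right --- it is in the structural analysis of the core --- but you have not supplied the missing argument; you have only identified where it must go. To complete your proof you should either reproduce the inductive argument of \cite[Lemma~1.5]{ALNR} or cite it, as the paper does.
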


\begin{proof}
The conditions on $\gamma$ say that $(\pi\times V)(\hat\alpha_z(b))=\gamma_z((\pi\times V)(b))$ for all $b\in A\times_\alpha\N$. Thus
\begin{align*}
\Big\|(\pi\times V)\Big(\int_{\T}\hat\alpha_z(b)\,dz\Big)\Big\|&=\Big\|\int_{\T}(\pi\times V)(\hat\alpha_z(b))\,dz\Big\|=\Big\|\int_{\T}\gamma_z((\pi\times V)(b))\,dz\Big\|\\
&\leq \int_{\T}\|\gamma_z((\pi\times V)(b))\|\,dz=\int_{\T}\|(\pi\times V)(b)\|\,dz\\
&=\|(\pi\times V)(b)\|,
\end{align*}
We now take $(B,\beta)=(A\times_\alpha\N,\hat\alpha)$, and apply \cite[Lemma~2.2]{BKR} to $(B,\beta)$. We have just verified the hypothesis (2) of \cite[Lemma~2.2]{BKR}. The other hypothesis (1) asks for $\pi\times V$ to be faithful on the fixed-point algebra $B^\beta=(A\times_\alpha\N)^{\hat\alpha}$. However, the proof of \cite[Lemma~1.5]{ALNR} uses neither that $A$ is unital nor the estimate (ii) in \cite[Theorem~1.2]{ALNR}, and hence we can deduce from that proof that $\pi\times V$ is faithful on $B^\beta$. Thus \cite[Lemma~2.2]{BKR} applies, and the result follows.
\end{proof}


\section{Relative Cuntz-Pimsner algebras of Exel systems}\label{mainthm}

An endomorphism $\alpha$ of a $C^*$-algebra $A$ is \emph{extendible} if it extends to a strictly continuous endomorphism $\overline{\alpha}$ of $M(A)$. Nondegenerate endomorphisms, for example, are autmatically extendible with $\overline{\alpha}(1)=1$. In this paper we are interested in \emph{Exel systems} $(A,\alpha,L)$ of the kind studied in \cite{BRV}, which means that $\alpha$ is an extendible endomorphism of a $C^*$-algebra $A$  and  $L:A\to A$ is a  positive linear map which  extends to a  positive linear map $\overline{L}:M(A)\to M(A)$ such that 
\begin{equation}\label{transferoponmofa}
L(\alpha(a)m)=a\overline{L}(m)\ \text{ for $a\in A$ and $m\in M(A)$.}
\end{equation}
Equation~\eqref{transferoponmofa} implies that $\overline{L}$ is strictly continuous, and we then assume further that $\overline{L}(1_{M(A)})=1_{M(A)}$ (but not that $\overline{\alpha}$ is unital).  We say that $L$ is a \emph{transfer operator} for $(A, \alpha)$.

Let $M_L$ denote the Hilbert bimodule over $A$ constructed in \cite{BRV}. (This construction extends the one  of \cite{E} to non-unital $A$, and follows the lines of \cite{BR}.)  Briefly,  $A$ is given a bimodule structure by $a\cdot m=am$ and $m\cdot b=m\alpha(b)$, and the pairing $\langle m\,,\, n\rangle= L(m^*n)$ defines a pre-inner product on $A$.  Modding out by $m$ such that $\langle m\,,\, m\rangle=0$ and completing gives a right-Hilbert bimodule $M_L$. We denote by $q:A\to M_L$ the canonical map of $A$ onto a dense sub-bimodule of $M_L$, and by $\phi$ the homomorphism of $A$ into $\L(M_L)$ implementing the left action.  

Following \cite{FR}, a representation $(\psi,\pi)$ of $M_L$ in a $C^*$-algebra $B$  consists of a linear map $\psi:M_L\to B$ and a homomorphism $\pi:A\to B$ such that
\[
\psi(m\cdot a)=\psi(m)\pi(a),\quad \psi(m)^*\psi(n)=\pi(\langle m\,,\, n\rangle), \text{\ and\ }\psi(a\cdot m)=\pi(a)\psi(m)
\]
for $a\in A$ and $m,n\in M_L$. By \cite[Proposition~1.8]{FR}, a representation $(\psi,\pi)$ of $M_L$ gives a representation $(\psi^{\otimes i},\pi)$ of $M_L^{\otimes i}:=M_L\otimes_A\cdots\otimes_A M_L$ such that $\psi^{\otimes i}(m_1\otimes_A\cdots\otimes_A m_i)=\psi(m_1)\cdots\psi(m_i)$. The Toeplitz algebra $\TT(M_L)$  is the $C^*$-algebra genertaed by a universal  representation $j_M,j_A)$ of $M_L$, and \cite[Lemma~2.4]{FR} says that
\[
\TT(M_L)=\clsp\big\{j_M^{\otimes i}(m)j_M^{\otimes j}(n)^*:m\in M_L^{\otimes i},\;n\in M_L^{\otimes j}\;i,j\in\N\big\}.
\]
There is  a strongly continuous action $\gamma:\T\to\Aut \TT(M_L)$, called the \emph{gauge action}, such that $\gamma_z(j_A(a))=j_A(a)$ and $\gamma_z(j_M(m))=zj_M(m)$ \cite[Proposition~1.3]{FR}.

A representation $(\psi,\pi)$ of $M_L$ in $B$ gives  a homomorphism $(\psi,\pi)^{(1)}:\K(M_L)\to B$ such that $(\psi,\pi)^{(1)}(\Theta_{m,n})=\psi(m)\psi(n)^*$ for every rank-one operator $\Theta_{m,n}:p\mapsto m\cdot\langle n\,,\, p\rangle$. Following \cite[\S1]{FMR}, if $J$ is an ideal of $A$ contained in $\phi^{-1}(\K(M_L))$, then we view the \emph{relative Cuntz-Pimsner algebra} $\O(J,M_L)$ of \cite{MS} as the quotient of $\TT(M_L)$ by the ideal generated by
\[
\big\{j_A(a)-(j_M, j_A)^{(1)}(\phi(a)): a\in J\big\}.
\]
We write $Q$ or $Q_J$ for the quotient map.  Then $(k_M,k_A):=(Q_J\circ j_M,Q_J\circ j_A)$ is universal for  representations $(\psi,\pi)$ which are \emph{coisometric} on $J$ (that is, satisfy $\pi|_J= (\psi,\pi)^{(1)}\circ\phi|_J$).  If $J=\{0\}$ then $\O(J,M_L)$ is just $\TT(M_L)$; if $J=\phi^{-1}(\K(M_L))$ then $\O(J,M_L)$ is the \emph{Cuntz-Pimsner algebra} $\O(M_L)$ of \cite{Pim}, and representations that are coisometric on $\phi^{-1}(\K(M_L))$ are called \emph{Cuntz-Pimsner covariant}.

It follows from \cite[Lemma~2.4]{FR} that each quotient $\O(J,M_L)$ carries a gauge action $\gamma:\T\to \O(J,M_L)$ such that the quotient map  $Q_J$ is equivariant, and  the fixed-point algebra or \emph{core}  is
\[
\O(J,M_L)^\gamma=\clsp\big\{k_M^{\otimes i}(m)k_M^{\otimes i}(n)^*:m,n\in M_L^{\otimes i},\;i\in\N\big\}.
\]
The following theorem generalises \cite[Theorem~6.5]{E2}.

\begin{thm}\label{relCPalg=St}
Suppose that $(A,\alpha,L)$ is an Exel system, and $J$ is an ideal of $A$ contained in $\phi^{-1}(\K(M_L))$.  
 There is a unique isometry $V\in M(\O(J,M_L))$ such that $k_M(q(a))=k_A(a)V$ for $a\in A$, and $\Ad V$ restricts to an endomorphism $\alpha'$ of the core $C_J:=\O(J,M_L)^\gamma$ such that 
\begin{equation}\label{defalpha'}
\alpha'\big(k_M^{\otimes i}(a\cdot m)k_M^{\otimes i}(b\cdot n)^*\big)
=k_M^{\otimes (i+1)}(q(\alpha(a))\otimes_Am)k_M^{\otimes (i+1)}(q(\alpha(b))\otimes_An)^* 
\end{equation}
for $a,b\in A$ and $m,n\in M_L^{\otimes i}$.
Further,  $\alpha'$ is extendible with $\overline{\alpha'}(1)=VV^*$, is injective and has range $\overline{\alpha'}(1)C_J\overline{\alpha'}(1)$.
Finally,  $(\id,V)$ is a Stacey-covariant representation of $(C_J,\alpha')$ such that $\id\times V$ is an isomorphism of the Stacey crossed product $C_J\times_{\alpha'}\N$ onto $\O(J,M_L)$.
\end{thm}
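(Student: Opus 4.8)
The plan is to build the isometry $V$ using the universal property of the Toeplitz algebra, verify the covariance relation $k_M(q(a)) = k_A(a)V$, check that $\operatorname{Ad}V$ preserves the core and is given by the stated formula, and then invoke the dual-invariant uniqueness theorem (Proposition~\ref{diut}) to conclude that $\operatorname{id}\times V$ is an isomorphism.

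First I would construct $V$. Since $\overline{L}(1) = 1$, the element $q$ extends to a unital map on $M(A)$, and in particular $q(1_{M(A)})$ should serve as a candidate for a multiplier $V = k_M(q(1))$ of $\mathcal{O}(J, M_L)$; one checks $V^*V = k_A(\langle q(1), q(1)\rangle) = k_A(\overline{L}(1)) = k_A(1) = 1$ in $M(\mathcal{O}(J,M_L))$, so $V$ is an isometry, and the bimodule identity $q(1)\cdot a = q(\alpha(a))$ wait — actually $q(a) = q(1 \cdot a) = q(1)\cdot a$ is false in general since the right action is twisted; rather $a \cdot q(1) = q(a)$, so $k_A(a)V = k_M(a \cdot q(1)) = k_M(q(a))$, giving the covariance relation. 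Uniqueness of $V$ follows because $\mathcal{O}(J,M_L)$ is generated by the ranges of $k_A$ and $k_M$ and the relation $k_M(q(a)) = k_A(a)V$ pins down $V$ on a dense subspace.

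Next I would establish that $\operatorname{Ad}V$ maps $C_J$ into $C_J$ and compute it. A typical spanning element of the core is $k_M^{\otimes i}(a\cdot m)k_M^{\otimes i}(b\cdot n)^*$; applying $\operatorname{Ad}V$ and using $V^* k_A(a) = V^* k_A(a) V V^* = k_M(q(1))^* k_M(q(a)) V^* $ wait, more directly, $V k_A(a) = k_M(q(a)) = k_M(a \cdot q(1))$, and one pushes $V$ through using $V k_M^{\otimes i}(\xi) = k_M^{\otimes(i+1)}(q(1) \otimes_A \xi)$ together with $q(1) \otimes_A (a \cdot m) = q(a) \otimes_A m = \cdots$; the twisting of the left action by $\alpha$ is exactly what produces $q(\alpha(a))$ in formula~\eqref{defalpha'}. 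This shows $\alpha' := \operatorname{Ad}V|_{C_J}$ is an endomorphism; extendibility with $\overline{\alpha'}(1) = VV^*$ is immediate since $\operatorname{Ad}V$ is strictly continuous and $VV^* \in M(\mathcal{O}(J,M_L))$, and $VV^* \in C_J$ because it is gauge-invariant. Injectivity of $\alpha'$ follows from $V$ being an isometry ($\alpha'(c) = 0 \Rightarrow V c V^* = 0 \Rightarrow c = V^*VcV^*V = V^*(VcV^*)V = 0$), and the range computation $\operatorname{range}\alpha' = VV^* C_J VV^*$ is a direct consequence of $\operatorname{Ad}V$ being an isomorphism onto the corner $VV^* \mathcal{O}(J,M_L) VV^*$ intersected with $C_J$.

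Finally, $(\operatorname{id}, V)$ is Stacey-covariant by the computation of $\alpha'$, so by universality there is a nondegenerate homomorphism $\operatorname{id}\times V : C_J \times_{\alpha'}\N \to \mathcal{O}(J,M_L)$, which is surjective because its range contains $C_J$ and $V$, and these generate $\mathcal{O}(J,M_L)$ (as $k_M(q(a)) = k_A(a)V$, and general $k_M^{\otimes i}(m) k_M^{\otimes j}(n)^*$ can be built from core elements and powers of $V, V^*$). For injectivity I would apply Proposition~\ref{diut}: the embedding $\operatorname{id} : C_J \to \mathcal{O}(J,M_L)$ is faithful, and the restriction of the gauge action $\gamma$ to $\mathcal{O}(J,M_L)$ fixes $C_J$ pointwise and satisfies $\gamma_z(V) = \gamma_z(k_M(q(1))) = z\, k_M(q(1)) = zV$, which are exactly the hypotheses needed. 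The main obstacle I anticipate is the bookkeeping in the second step — carefully verifying that $V$ intertwines the iterated left actions on $M_L^{\otimes i}$ so that formula~\eqref{defalpha'} holds and that $\operatorname{Ad}V$ genuinely lands in $C_J$ rather than a larger subalgebra — together with the subtlety that $\overline\alpha$ need not be unital, so one must check $VV^*$-compressions appear in the right places and that $C_J$ is still generated in the way claimed when passing between $\mathcal{O}(J,M_L)$ and the crossed product.
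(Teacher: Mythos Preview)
Your overall strategy---build $V$, verify $k_M(q(a))=k_A(a)V$, check $\operatorname{Ad}V$ preserves the core with the stated formula, then apply Proposition~\ref{diut}---is exactly the paper's approach, and your arguments for injectivity, Stacey covariance, surjectivity, and the application of the dual-invariant uniqueness theorem are all fine. Your range argument (using that $V^*C_JV\subset C_J$ by gauge-invariance of $V^*cV$) is slicker than the paper's explicit computation and works.

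The genuine gap is the construction of $V$. The paper allows $A$ to be non-unital, and in that case there is no $1_A$, so ``$V=k_M(q(1))$'' has no meaning: $q$ maps $A$ into $M_L$, and neither $q$ nor $k_M$ has been extended to multipliers. Your sentence ``$q$ extends to a unital map on $M(A)$'' conflates the target $M_L$ with $M(A)$ and is not justified. What the paper actually does is take an approximate identity $\{e_\lambda\}$ for $A$, prove that $\{k_M(q(e_\lambda))\}$ is strictly Cauchy in $M(\O(J,M_L))$ (this uses that the spanning elements $k_M^{\otimes i}(a\cdot m)k_M^{\otimes j}(b\cdot n)^*$ are hit in a controlled way and that $\|q(e_\lambda)\|\le 1$), and define $V$ as the strict limit. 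The isometry computation $V^*V=1$ then uses strict continuity of $\overline{L}$ and nondegeneracy of $k_A$ (cited from \cite{BRV}) rather than a direct evaluation at $1$. Once $V$ exists, everything you wrote goes through---including $\gamma_z(V)=zV$, which now follows from $\gamma_z(k_M(q(e_\lambda)))=zk_M(q(e_\lambda))$ and strict continuity---but you must supply this approximate-identity construction to handle the non-unital case the theorem is stated for.
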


\begin{proof}
Since we know from \cite[Corollary~3.5]{BRV} that $k_A:A\to \O(J,M_L)$ is nondegenerate\footnote{We caution that this nondegeneracy is not at all obvious, and even slightly surprising, because the representation $\pi$ in a Toeplitz representation $(\psi,\pi)$ is not required to be nondegenerate.}, there is at most one multiplier $V$ satisfying $k_M(q(a))=k_A(a)V$, and we have uniqueness.

When the $C^*$-algebra $A$ has an identity, we can deduce from the results in \cite[\S3]{BR} that $V:=k_M(q(1))$ has the required properties. When $A$ does not have an identity, we take an approximate identity $\{e_{\lambda}\}$ for $A$, and claim, following Fowler \cite[\S3]{F}, that $\{k_M(q(e_{\lambda}))\}$ converges strictly in $M(\O(J,M_L))$ to a multiplier $V$. Indeed, for $a,b\in A$, $m\in M_L^{\otimes i}$ and $n\in M_L^{\otimes j}$ we have
\begin{align}\label{psialcvgeleft}
k_M(q(e_\lambda))k_M^{\otimes i}(a\cdot m)k_M^{\otimes j}(b\cdot n)^* 
&=k_M(q(e_\lambda))k_A(a)k_M^{\otimes i}(m)k_M^{\otimes j}(b\cdot n)^*\\
&=k_M(q(e_\lambda\alpha(a)))k_M^{\otimes i}(m)k_M^{\otimes j}(b\cdot n)^*\notag\\
&\to k_M(q(\alpha(a)))k_M^{\otimes i}(m)k_M^{\otimes j}(b\cdot n)^*,\notag
\end{align}
and similarly
\begin{equation}\label{psialcvgeright}
k_M^{\otimes i}(a\cdot m)k_M^{\otimes j}(b\cdot n)^*k_M(q(e_\lambda))\to k_M^{\otimes i}(a\cdot m)k_M^{\otimes j}(n)^*k_M(q(b^*)).
\end{equation}
Since $\overline{L}$ is positive and $\overline{L}(1)=1$, we have $\|L\|\leq 1$, and $\|q(e_{\lambda})\|\leq \|e_{\lambda}\|\leq 1$ for all $\lambda$. Thus, since the elements $k_M^{\otimes i}(a\cdot m)k_M^{\otimes j}(b\cdot n)^*$ span a dense subspace of $\O(J,M_L)$, an $\epsilon/3$ argument using \eqref{psialcvgeleft} and~\eqref{psialcvgeright} shows that $\{k_M(q(e_{\lambda}))b\}$ and $\{bk_M(q(e_{\lambda}))\}$ converge in $\O(J,M_L)$ for every $b\in \O(J,M_L)$. Thus $\{k_M(q(e_{\lambda}))\}$ is strictly Cauchy, and since $M(\O(J,M_L)$ is strictly complete we deduce that $\{k_M(q(e_{\lambda}))\}$ converges strictly to a multiplier $V$; \eqref{psialcvgeleft} implies that $V$ satisfies
\begin{equation}\label{propV}
Vk_M^{\otimes i}(a\cdot m)k_M^{\otimes j}(b\cdot n)^*=k_M(q(\alpha(a)))k_M^{\otimes i}(m)k_M^{\otimes j}(b\cdot n)^*.
\end{equation}

To see that $V$ is an isometry, we observe that 
\[
k_M(q(e_{\lambda}))^*k_M(q(e_{\lambda}))=k_A(\langle q(e_{\lambda}),q(e_{\lambda})\rangle)=k_A(L(e_{\lambda}^2));
\]
since $\overline{L}$ is strictly continuous, $L(e_{\lambda}^2)$ converges strictly to $\overline{L}(1_{M(A)})=1_{M(A)}$.  Since $k_A:A\to \O(J,M_L)$ is nondegenerate, $k_M(q(e_{\lambda}))^*k_M(q(e_{\lambda}))$ converges strictly to $1=1_{M(\O(J,M_L))}$, and since the multiplication in a multiplier algebra is jointly strictly continuous on bounded sets (by another $\epsilon/3$ argument), we deduce that $V^*V=1$. Thus $V$ is an isometry. Next, we let $a\in A$ and compute
\begin{equation}
k_M(q(a))=\lim_{\lambda}k_M(q(ae_\lambda))=\lim_{\lambda}k_M(a\cdot q(e_\lambda))=\lim_{\lambda}k_A(a)k_M(q(e_\lambda))=k_A(a)V,
\end{equation}
so $V$ has the required properties. 

Conjugating by the isometry $V\in M(\O(J,M_L))$ gives an endomorphism $\Ad V:T\mapsto VTV^*$, and two applications of \eqref{propV} show that
\begin{align}\label{formforalpha'}
\Ad V\big(k_M^{\otimes i}(a\cdot m)k_M^{\otimes i}(b\cdot n)^*\big)
&=k_M(q(\alpha(a)))k_M^{\otimes i}(m)k_M^{\otimes i}(n)^*k_M(q(\alpha(b)))^*\\
&=k_M^{\otimes(i+1)}(q(\alpha(a))\otimes_A m)k_M^{\otimes (i+1)}(q(\alpha(b))\otimes_An)^*.\notag
\end{align}
The formula \eqref{formforalpha'} implies that $\Ad V$ maps the core $C_J$ into itself, and that the restriction $\alpha':=(\Ad V)|_{C_J}$ satisfies \eqref{defalpha'}. The pair $(\id,V)$ is then by definition Stacey covariant for $\alpha'\in \End C_J$, and we can apply the dual-invariant uniqueness theorem (Proposition~\ref{diut}) to the gauge action $\gamma$ on $\O(J,M_L)$, finding that $\id\times V$ is a faithful representation of $C_J\times_{\alpha'}\N$ in $\O(J,M_L)$. The identity $k_M(q(a))=k_A(a)V$ implies that $k_M(M_L)=\overline{k_A(A)V}$, and since $\O(J,M_L)$ is generated by $k_A(A)\subset C_J$ and $k_M(M_L)$, the range of $\id\times V$ contains the generating set $k_A(A)\cup k_A(A)V$, and hence is all of $\O(J,M_L)$.

It remains for us to prove the assertions about $\alpha'$.  It is injective because $\Ad V$ is. Since  $k_A$ is nondegenerate the image $\{k_A(e_\lambda)\}$ of an approximate identity $\{e_\lambda\}$ converges strictly to $1$ in $M(\O(J,M_L))$. Hence $\{\alpha'(k_A(e_\lambda))\}=\{Vk_A(e_\lambda) V^*\}$ converges strictly to the projection $VV^*$. Thus $\alpha'$ is extendible with $\overline{\alpha'}(1)=VV^*$ by, for example, \cite[Proposition~3.1.1]{adji}. 

The range of $\alpha'$ is certainly contained in the corner $\overline{\alpha'}(1)C_J \overline{\alpha'}(1)$. To see the reverse inclusion, fix $T\in \overline{\alpha'}(1)C_J\overline{\alpha'}(1)$. Then $T=VV^*SVV^*=\Ad V(V^*SV)$ for some $S\in C_J$. 
Since $C_J$ is $\alpha'$-invariant, to see that $T$ is in the range of $\alpha'=\Ad V|_{C_J}$  it suffices to see that $V^*SV$ is in $C_J$, and, by continuity of $\Ad V^*$, it suffices to see this for $S$ of the form
\[
S=k_M^{\otimes i}((a\cdot m_1)\otimes_Am')k_M^{\otimes i}((b\cdot n_1)\otimes_A n')^*
\]
where $a, b\in A$, $m_1, n_1\in M_L$ and $m',n'\in M_L^{\otimes (i-1)}$. For $i\geq 1$ the calculation
\begin{align*}
V^*k_M^{\otimes i}((a\cdot m_1)\otimes_Am')
&=(k_A(a^*)V)^*k_M(m_1)k_M^{\otimes (i-1)}(m')\\
&=k_M(q(a^*))^*k_M(m_1)k_M^{\otimes (i-1)}(m')\\
&=k_A(\langle q(a^*),m_1\rangle)k_M^{\otimes (i-1)}(m')\\
&=k_M^{\otimes (i-1)}(\langle q(a^*),m_1\rangle\cdot m')
\end{align*}
gives 
\[
V^*SV=k_M^{\otimes (i-1)}(\langle q(a^*),m_1\rangle\cdot m')k_M^{\otimes (i-1)}(\langle q(b^*),n_1\rangle\cdot n')^*\in C_J.
\]
For $i=0$  we have 
\[V^*SV=V^*k_A(a)k_A(b)^*V=k_M(q(a^*))^*k_M(q(b^*))=k_A(\langle q(a^*)\,,\, q(b^*)\rangle)\in C_J.
\]
Thus $T=\Ad V(V^*SV)=\alpha'(V^*SV)$, and $\alpha'$ has range $\overline{\alpha'}(1)C_J \overline{\alpha'}(1)$. 
\end{proof}


\section{Exel crossed products}

Suppose that $(A,\alpha,L)$ is an Exel system, as in \S\ref{mainthm}.
As in \cite{BRV}, a \emph{Toeplitz-covariant representation} of $(A,\alpha,L)$ in a $C^*$-algebra $B$ consists of a nondegenerate homomorphism $\pi:A\to B$ and an element $S\in M(B)$ such that
\[S\pi(a)=\pi(\alpha(a))S\quad\text{and}\quad
S^*\pi(a)S=\pi(L(a)).
\]
The \emph{Toeplitz crossed product} $\TT(A,\alpha,L)$ is generated by a universal Toeplitz-covariant representation $(i,s)$ (or, more correctly, by $i(A)\cup i(A)s$).  By \cite[Proposition~3.1]{BRV}, there is a map $\psi_s:M_L\to \TT(A,\alpha,L)$ such that $\psi_s(q(a))=i(a)s$, and $(\psi_s,i)$ is a  representation of $M_L$. Set 
\[
K_\alpha:=\overline{A\alpha(A)A}\cap \phi^{-1}(\K(M_L)).
\]
 In \cite[\S4]{BRV}, the \emph{Exel crossed product} $A\rtimes_{\alpha,L}\N$ of a possibly non-unital $C^*$-algebra $A$ by $\N$ is the quotient of $\TT(A,\alpha,L)$ by the ideal generated by
\[
\big\{i(a)-(\psi_s,i)^{(1)}(\phi(a)): a\in K_\alpha\big\}.
\]
We write $\QQ$ for the quotient map of $\TT(A,\alpha,L)$ onto $A\rtimes_{\alpha,L}\N$, and $(j,t):=(\QQ\circ i,\overline{\QQ}(s))$. There is a \emph{dual action} $\hat\alpha$ of $\T$ on $A\rtimes_{\alpha,L}\N$ such that $\hat\alpha_z(j(a))=j(a)$ and $\hat\alpha_z(t)=zt$.

Theorem~4.1 of \cite{BRV} says that there is an isomorphism $\theta$ of $\O(K_\alpha,M_L)$ onto $A\rtimes_{\alpha,L}\N$ such that $\theta\circ k_A=\QQ\circ i$ and $\theta\circ k_M=\QQ\circ \psi_s$. We will now use $\theta$ to transfer the conclusions of Theorem~\ref{relCPalg=St} over to $A\rtimes_{\alpha,L}\N$.

If $\{e_\lambda\}$ is an approximate idenity for $A$, then
\[
\theta\circ k_M(q(e_\lambda))=\QQ(\psi_s(q(e_\lambda)))=\QQ(i(e_\lambda)s)=\QQ(i(e_{\lambda}))\overline{\QQ}(s)
\]
converges by nondegeneracy of $i$ to $\overline{\QQ}(s)$, and hence $\overline{\theta}$ carries the isometry $V$ of Theorem~\ref{relCPalg=St} into $t:=\overline{\QQ}(s)$.  
The isomorphism $\theta$ is equivariant for the gauge action $\gamma$ on $\O(K_\alpha,M_L)$ and the dual action $\hat\alpha$ on $A\rtimes_{\alpha, L}\N$, and hence maps the core $C_{K_\alpha}=\O(K_\alpha,M_L)^\gamma$ onto $(A\rtimes_{\alpha,L}\N)^{\hat\alpha}$. 

Next, we want a workable description of $(A\rtimes_{\alpha,L}\N)^{\hat\alpha}$. 
For $m=q(a_1)\otimes_A\cdots\otimes_Aq(a_i)\in M_L^{\otimes i}$ we have
\begin{align}\label{simplifyinEcp}
k_M^{\otimes i}(m)&=k_M(q(a_1))\cdots\ k_M(q(a_i))=k_A(a_1)V\cdots k_A(a_i)V\\
&=k_A(a_1)k_A(\alpha(a_2))V^2k_A(a_3)V\cdots k_A(a_i)V\notag\\
&=k_A\big(a_1\alpha(a_2)\alpha^{2}(a_3)\cdots \alpha^{i-1}(a_i)\big)V^i,\notag
\end{align}
and hence $\theta$ takes $k_M^{\otimes i}(m)$ into an element of the form $j(a)t^i$. Now
\[
\lsp\big\{j(a)t^it^{*k}j(b):a,b\in A,\;i,k\in\N\big\}
\]
is a $*$-subalgebra of $A\rtimes_{\alpha,L}\N$ containing the generating set $j(A)\cup j(A)t$, and hence is dense in $A\rtimes_{\alpha,L}\N$. The expectation onto $(A\rtimes_{\alpha,L}\N)^{\hat\alpha}$ is continuous and kills terms with $i\not= k$, so 
\begin{equation}\label{presentfpas}
(A\rtimes_{\alpha,L}\N)^{\hat\alpha}=\clsp\big\{j(a)t^it^{*i}j(b):a,b\in A,\;i\in\N\big\}.
\end{equation}

\begin{cor}\label{Ex=St2}
Suppose that $(A,\alpha,L)$ is an Exel system. Then there is an injective endomorphism $\beta$ of $(A\rtimes_{\alpha,L}\N)^{\hat\alpha}$ such that
\begin{equation}\label{formforbeta}
\beta(j(a)t^it^{*i}j(b))=j(\alpha(a))t^{i+1}t^{*(i+1)}j(\alpha(b)).
\end{equation}
The endomorphism $\beta$ is extendible with $\overline\beta(1)=tt^*$ and has range $tt^*(A\rtimes_{\alpha,L}\N)^{\hat\alpha}tt^*$. The pair $(\id,t)$ is a Stacey-covariant representation of $\big((A\rtimes_{\alpha,L}\N)^{\hat\alpha},\beta\big)$, and $\id\times t$ is an isomorphism of the Stacey crossed product $(A\rtimes_{\alpha,L}\N)^{\hat\alpha}\times_\beta\N$ onto the Exel crossed product $A\rtimes_{\alpha,L}\N$.
\end{cor}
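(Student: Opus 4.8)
The plan is to obtain Corollary~\ref{Ex=St2} by transporting the conclusions of Theorem~\ref{relCPalg=St} across the isomorphism $\theta\colon\O(K_\alpha,M_L)\to A\rtimes_{\alpha,L}\N$ of \cite[Theorem~4.1]{BRV}. The three facts recorded just before the statement do most of the work: $\theta\circ k_A=j$, the strict extension $\overline\theta$ carries the isometry $V$ of Theorem~\ref{relCPalg=St} to $t$, and $\theta$ is equivariant for the gauge action and the dual action, so it restricts to an isomorphism $\theta_0\colon C_{K_\alpha}\to(A\rtimes_{\alpha,L}\N)^{\hat\alpha}$. I would define $\beta:=\theta_0\circ\alpha'\circ\theta_0^{-1}$, where $\alpha'$ is the endomorphism of $C_{K_\alpha}$ from Theorem~\ref{relCPalg=St}; this is an injective endomorphism of $(A\rtimes_{\alpha,L}\N)^{\hat\alpha}$ because $\theta_0$ is an isomorphism and $\alpha'$ is an injective endomorphism.

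To get the formula \eqref{formforbeta}, I would first observe, using $k_M(q(a))=k_A(a)V$ as in \eqref{simplifyinEcp}, that for $a,b\in A$ and $i\in\N$ the element $k_A(a)V^iV^{*i}k_A(b)$ is fixed by the gauge action, hence lies in $C_{K_\alpha}$, and that $\theta_0\big(k_A(a)V^iV^{*i}k_A(b)\big)=j(a)t^it^{*i}j(b)$. Since $\alpha'=(\Ad V)|_{C_{K_\alpha}}$, and since $k_M(q(a))=k_A(a)V$ gives $Vk_A(a)=k_A(\alpha(a))V$ and $k_A(b)V^*=V^*k_A(\alpha(b))$, a short computation yields
\[
\alpha'\big(k_A(a)V^iV^{*i}k_A(b)\big)=k_A(\alpha(a))V^{i+1}V^{*(i+1)}k_A(\alpha(b)),
\]
and applying $\theta_0$ gives \eqref{formforbeta}. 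By \eqref{presentfpas} the elements $j(a)t^it^{*i}j(b)$ span a dense subspace of $(A\rtimes_{\alpha,L}\N)^{\hat\alpha}$, so \eqref{formforbeta} also determines $\beta$ uniquely.

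The remaining structural claims about $\beta$ are transported the same way. Because $\overline\theta$ is a $*$-isomorphism with $\overline\theta(V)=t$, Theorem~\ref{relCPalg=St} gives that $\beta$ is extendible with $\overline\beta(1)=\overline\theta(\overline{\alpha'}(1))=\overline\theta(VV^*)=tt^*$, and that the range of $\beta$ is $\overline\theta\big(\overline{\alpha'}(1)C_{K_\alpha}\overline{\alpha'}(1)\big)=tt^*(A\rtimes_{\alpha,L}\N)^{\hat\alpha}tt^*$. For the Stacey-covariance of $(\id,t)$: $t$ is an isometry since $t^*t=\overline\theta(V^*V)=\overline\theta(1)=1$; the inclusion $\id\colon(A\rtimes_{\alpha,L}\N)^{\hat\alpha}\to A\rtimes_{\alpha,L}\N$ is nondegenerate because $j(A)\subseteq(A\rtimes_{\alpha,L}\N)^{\hat\alpha}$ and $j$ is nondegenerate; and $\beta=(\Ad t)|_{(A\rtimes_{\alpha,L}\N)^{\hat\alpha}}$ because $\beta=\theta_0\circ(\Ad V)|_{C_{K_\alpha}}\circ\theta_0^{-1}$ and $\overline\theta$ intertwines $\Ad V$ with $\Ad t$, which is exactly the covariance relation $\id(\beta(x))=t\,\id(x)\,t^*$.

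It remains to see that $\id\times t$ is an isomorphism of $(A\rtimes_{\alpha,L}\N)^{\hat\alpha}\times_\beta\N$ onto $A\rtimes_{\alpha,L}\N$. The cleanest route is to apply the dual-invariant uniqueness theorem (Proposition~\ref{diut}): $\id$ is faithful, and the dual action $\hat\alpha$ is a strongly continuous action of $\T$ on $A\rtimes_{\alpha,L}\N$ that fixes $(A\rtimes_{\alpha,L}\N)^{\hat\alpha}$ pointwise and satisfies $\hat\alpha_z(t)=zt$; hence $\id\times t$ is faithful, and its range is all of $A\rtimes_{\alpha,L}\N$ since it contains the generating set $j(A)\cup j(A)t$. (Alternatively, one checks that $\theta$ conjugates the isomorphism $\id\times V$ of Theorem~\ref{relCPalg=St} to $\id\times t$.) I do not expect a real obstacle here: the whole corollary is transport of structure along $\theta$, and the only place warranting care is the multiplier-algebra bookkeeping---that $\overline\theta(V)=t$ (done before the statement) and the identification of $\theta_0^{-1}\big(j(a)t^it^{*i}j(b)\big)$ used for \eqref{formforbeta}.
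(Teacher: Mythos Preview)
Your proposal is correct and follows the same strategy as the paper: transport Theorem~\ref{relCPalg=St} across the isomorphism $\theta$ of \cite[Theorem~4.1]{BRV}, using $\overline\theta(V)=t$ and the equivariance of $\theta$. The only cosmetic difference is in verifying \eqref{formforbeta}: the paper computes $\theta(k_M^{\otimes i}(c\cdot m))$ via \eqref{simplifyinEcp} and then invokes the tensor-product formula \eqref{defalpha'}, whereas you work directly with $k_A(a)V^iV^{*i}k_A(b)$ and the commutation relation $Vk_A(a)=k_A(\alpha(a))V$ (which follows from $k_M(q(a))=k_A(a)V$); both computations are equivalent, and your version is arguably more direct.
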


\begin{proof}
Applying the isomorphism $\theta:\O(K_\alpha, M_L)\to A\rtimes_{\alpha, L}\N$ to the conclusion of Theorem~\ref{relCPalg=St} gives an endomorphism $\beta:=\theta\circ\alpha'\circ\theta^{-1}$ of $(A\rtimes_{\alpha,L}\N)^{\hat\alpha}$ and an isomorphism $\id\times t$ of $(A\rtimes_{\alpha,L}\N)^{\hat\alpha}\times_\beta\N$ onto $A\rtimes_{\alpha,L}\N$. It remains for us to check the formula for $\beta$. Let $m=q(a_1)\otimes_A\cdots\otimes_Aq(a_i)$. Then the calculation \eqref{simplifyinEcp} shows that $\theta$ carries $k_M^{\otimes i}(c\cdot m)$ into $j(ca_1\alpha(a_2)\cdots\alpha^{i-1}(a_i))t^i$, and $k_M^{\otimes(i+1)}(q(\alpha(c))\otimes_Am)$ into 
\[
j\big(\alpha(c)\alpha(a_1)\alpha^2(a_2)\cdots\alpha^i(a_i)\big)t^{i+1}=j\big(\alpha(ca_1\alpha(a_2)\cdots\alpha^{i-1}(a_i))\big)t^{i+1},
\]
so \eqref{formforbeta} follows from \eqref{defalpha'}.
\end{proof}

\begin{remark}\label{idcpCP}
Since we have identified how the action $\alpha'$ on the core of  $\O(K_\alpha, M_L)$ pulls over to the Exel crossed product $A\rtimes_{\alpha, L}\N$, we will from now on freely identify $\O(K_\alpha, M_L)$ and $A\rtimes_{\alpha, L}\N$, and drop the isomorphism $\theta$ from our notation.
\end{remark}

\begin{example}
We now discuss a family of Exel systems studied in \cite{EaHR} and \cite{LRR}. Let $d\in \N$ and fix $B\in M_d(\Z)$ with nonzero determinant $N$. (This matrix $B$ plays the same role as the matrix $B$ in \cite{EaHR,LRR}; because $A$ is already heavily subscribed in this paper, we write $B^t$ in place of the matrix $A$ used there.)

The map $\sigma_{B^t}:\T^d\to \T^d$ characterised by $\sigma_{B^t}(e^{2\pi ix})=e^{2\pi iB^tx}$ is a covering map, and induces an endomorphism $\alpha_{B^t}:f\mapsto f\circ\sigma_{B^t}$ of $C(\T^d)$, and $L(f)(z):=N^{-1}\sum_{\sigma_{B^t}(w)=z}f(w)$ defines a transfer operator $L$ for $\alpha_{B^t}$. Proposition~3.3 of \cite{LRR} says that the Exel crossed product $C(\T^d)\rtimes_{\alpha_{B^t},L}\N$ is the universal $C^*$-algebra generated by a unitary representation $u$ of $\Z^d$ and an isometry $v$ satisfying
\begin{itemize}
\item[(E1)] $vu_m=u_{Bm}$, 
\smallskip
\item[(E2)] $v^*u_mv=\begin{cases} u_{B^{-1}m}&\text{if $m\in B\Z^d$}\\
0&\text{otherwise, and}\end{cases}$
\smallskip
\item[(E3)] $1=\sum_{m\in\Sigma} (u_mv)(u_mv)^*$;
\end{itemize}
we then have
\[
C(\T^d)\rtimes_{\alpha_{B^t},L}\N=\clsp\{u_mv^kv^{*l}u_n^*:k,l\in \N\text{ and }m,n\in \Z^d\}.
\]
When we view $C(\T^d)\rtimes_{\alpha_{B^t},L}\N$ as a Cuntz-Pimsner algebra $(\O(M_L),j_{M_L},j_{C(\T^d)})$ as in Remark~\ref{idcpCP}, $v=j_{M_L}(q(1))=t$ and $u$ is the representation $m\mapsto j_{C(\T^d)}(\gamma_m)$, where $\gamma_m(z):=z^m$. Since $\alpha_{B^t}(\gamma_m)=\gamma_{Bm}$, Corollary~\ref{Ex=St2} gives an endomorphism $\beta$ of
\[
\big(C(\T^d)\rtimes_{\alpha_{B^t},L}\N\big)^{\hat\alpha_{B^t}}=\clsp\{u_mv^iv^{*i}u_n^*:i\in \N\text{ and }m,n\in \Z^d\}
\]
satisfying
\begin{equation}\label{defbeta4}
\beta(u_mv^iv^{*i}u_n^*)=u_{Bm}v^{i+1}v^{*(i+1)}u_{Bn}^*.
\end{equation}

Now that we have the formula for $\beta$, we can prove directly that there is such an endomorphism. To see this, we choose a set $\Sigma$ of coset representatives for $\Z^d/B\Z^d$, and recall from \cite[Proposition~5.5(b)]{LRR} that for each $i$ and 
\[
\Sigma_i:=\{\mu_1+B\mu_2+\cdots B^{i-1}\mu_i:\mu\in\Sigma^i\},
\]
$\{u_mv^iv^{*i}u_n^*:m,n\in\Sigma_i\}$ is a set of nonzero matrix units. Thus there is a homomorphism $\zeta_i:M_{\Sigma_i}(\C)\to M_{\Sigma_{i+1}}(\C)$ such that $\zeta_i(u_mv^iv^{*i}u_n^*)=u_{Bm}v^{i+1}v^{*(i+1)}u_{Bn}^*$. Let $\delta$ denote the universal representation of $B^1\Z^d$ in $C^*(B^{i+1}\Z^d)$. Then the unitary representation $B^im\mapsto \delta_{B^{i+1}m}$ induces a homomorphism $\eta_i:C^*(B^i\Z^d)\to C^*(B^{i+1}\Z^d)$. When we identify $C_i:=\clsp\{u_mv^iv^{*i}u_n^*\}$ with $M_{\Sigma_i}(\C)\otimes C^*(B^i\Z^d)$ as in \cite[Proposition~5.5(c)]{LRR}, we get homomorphisms 
\[
\beta_i:=\zeta_i\otimes \eta_i:C_i=M_{\Sigma_i}(\C)\otimes C^*(B^i\Z^d)\to C_{i+1}=M_{\Sigma_{i+1}}(\C)\otimes C^*(B^{i+1}\Z^d) 
\]
satisfying \eqref{defbeta4}. The Cuntz relation (E3) implies that $\beta_{i+1}|_{C_i}=\beta_i$, and hence the $\beta_i$ combine to give a homomorphism $\beta:\bigcup_{i=1}^\infty C_i\to \bigcup_{i=1}^\infty C_i$ satisfying \eqref{defbeta4}; since the homomorphisms $\beta_i$ are norm-decreasing, $\beta$ extends to an endomorphism of $\O(M_L)^\gamma=\overline{\bigcup_{i=1}^\infty C_i}$.
\end{example}


\section{Cuntz-Pimsner algebras}\label{cp-algs}  Let $(A, \alpha, L)$ be an Exel system as in \S\ref{mainthm}, and let $I$ be an ideal in $A$. Following \cite[Definition~4.1]{BR}, we say that $L$  is \emph{almost faithful on $I$} if 
\[a\in I\text{\ and\ }L(b^*a^*ab)=0\text{\ for all $b\in A$ imply }a=0. 
\]
Since 
\begin{equation}\label{almostfaithful}
L(b^*a^*ab)=\langle q(ab),q(ab)\rangle=\langle\phi(a)(q(b)),\phi(a)(q(b))\rangle,
\end{equation}
$L$ is almost faithful on $I$ if and only if $\phi|_I:I\to \L(M_L)$ is injective. In this section, we suppose that $L$ is almost faithful on $\phi^{-1}(\K(M_L))$. Then \cite[Proposition~2.1]{MS} implies that the canonical map $k_A:A\to \O(M_L)$ is injective. This will allow us to use \cite[Corollary~4.9]{FMR} to realise the core $\O(M_L)^\gamma$ as a direct limit. 

We denote the identity operator on $M_L^{\otimes i}$ by $1_i$, and write $\K(M_L^{\otimes j})\otimes_A 1_{i-j}$ for the image of $\K(M_L^{\otimes j})$ under the map $T\mapsto T\otimes_A 1_{i-j}$. Then, following \cite[\S4]{FMR}, we define
\[
C_i=(A\otimes_A1_i)+(\K(M_L)\otimes_A1_{i-1})+\cdots+\K(M_L^{\otimes i}),
\]
which is a $C^*$-subalgebra of $\L(M_L^{\otimes i})$. We define $\phi_i:C_i\to C_{i+1}$ by $\phi_i(T)=T\otimes_A 1_i$, and define $(C_\infty,\iota^i):=\varinjlim (C_i,\phi_i)$. Since $k_A$ is injective, we can now  apply \cite[Corollary~4.9]{FMR} to the Cuntz-Pimsner covariant representation $(k_M,k_A)$, and deduce that there is an isomorphism $\kappa$ of $C_\infty$ onto the core $\O(M_L)^\gamma$ such that
\begin{equation}\label{defkappa}
\kappa(\iota^i(T\otimes_A1_{i-j}))=(k_M^{\otimes j},k_A)^{(1)}(T)\ \text{ for $T\in \K(M_L^{\otimes j})$ and $i\geq j$}
\end{equation}
(the notation in \cite{FMR} suppresses the maps $\iota^i$ ).

To describe the endomorphism $\beta:=\kappa^{-1}\circ\alpha'\circ\kappa$ of $C_\infty$, we need some notation.

\begin{lemma}\label{defU}
The map $U:A\to M_L$ defined by $U(a)=q(\alpha(a))$ is an adjointable isometry such that $U^*(q(a))=L(a)$ for $a\in A$.
\end{lemma}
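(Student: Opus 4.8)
The plan is to regard the source $A$ as the standard right-Hilbert $A$-module $A_A$ (with inner product $\langle a,b\rangle=a^*b$) and to produce the adjoint $U^*$ as the continuous extension of the map $q(a)\mapsto L(a)$ on the dense submodule $q(A)\subseteq M_L$. The one computational fact I would isolate first is the identity $L\circ\alpha=\id_A$: putting $m=1_{M(A)}$ in the transfer-operator relation \eqref{transferoponmofa} and using $\overline L(1_{M(A)})=1_{M(A)}$ gives $L(\alpha(a))=a\,\overline L(1_{M(A)})=a$ for every $a\in A$.

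Granting this, that $U$ is a module map is immediate: since $q$ is a bimodule map, $\alpha$ is multiplicative, and the right action on $M_L$ is $m\cdot b=m\alpha(b)$, we get $U(a\cdot b)=q(\alpha(a)\alpha(b))=q(\alpha(a))\cdot b=U(a)\cdot b$. Moreover $U$ preserves inner products, because $\langle U(a),U(b)\rangle=L(\alpha(a)^*\alpha(b))=L(\alpha(a^*b))=a^*b=\langle a,b\rangle$ by $L\circ\alpha=\id_A$; once adjointability is established this says $U^*U=\id_A$, i.e.\ $U$ is an isometry.

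The substance is the construction of $U^*$. I would define $W_0\colon q(A)\to A$ by $W_0(q(a))=L(a)$ and show it is contractive (which in particular makes it well defined): using $\langle q(\alpha(b)),q(c)\rangle=L(\alpha(b^*)c)=b^*L(c)$, an approximate identity $\{e_\lambda\}$ for $A$, and the Cauchy--Schwarz inequality for Hilbert modules,
\[
\|e_\lambda L(c)\|=\|\langle q(\alpha(e_\lambda)),q(c)\rangle\|\le\|q(\alpha(e_\lambda))\|\,\|q(c)\|\le\|q(c)\|,
\]
since $\|q(\alpha(e_\lambda))\|^2=\|L(\alpha(e_\lambda^*e_\lambda))\|=\|e_\lambda^*e_\lambda\|\le 1$; letting $\lambda$ run through the net gives $\|L(c)\|\le\|q(c)\|$, so $W_0$ extends to a contraction $W\colon M_L\to A$. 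Finally I would verify the adjoint relation on the dense set $q(A)$: for $a,b\in A$, $\langle U(b),q(a)\rangle=L(\alpha(b^*)a)=b^*\overline L(a)=b^*L(a)=\langle b,W(q(a))\rangle$, and then extend by density and continuity to conclude that $U$ is adjointable with $U^*=W$; in particular $U^*(q(a))=L(a)$.

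The only genuine obstacle is this last step: adjointability of an isometric Hilbert-module map is not automatic, and since $L$ is assumed merely positive (not $2$-positive) one cannot invoke a Kadison--Schwarz inequality to bound $\|L(c)\|$ by $\|q(c)\|$ directly. The approximate-identity-with-Cauchy--Schwarz estimate above is exactly what circumvents this; everything else is routine bookkeeping, and no faithfulness hypothesis on $L$ is needed.
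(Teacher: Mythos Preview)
Your proof is correct and follows essentially the same route as the paper's. The paper obtains the key bound $\|L(b)\|\le\|q(b)\|$ from the identity $\langle U(a),q(b)\rangle=a^*L(b)$ together with Cauchy--Schwarz, giving $\|a^*L(b)\|\le\|a\|\,\|q(b)\|$ for all $a\in A$; you do the same computation but specialise $a$ to an approximate identity $e_\lambda$ and then take the limit---a cosmetic difference only, since the sup over $\|a\|\le1$ of $\|a^*L(b)\|$ is $\|L(b)\|$ for exactly the same reason.
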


\begin{proof}
The calculation
\begin{equation}
\langle U(a)\,,\,U(b)\rangle=\langle q(\alpha(a))\,,\,q(\alpha(b))\rangle=L(\alpha(a)^*\alpha(b))=L(\alpha(a^*b))=a^*b=\langle a\,,\,b\rangle
\end{equation}
shows that  $U$ is inner-product preserving. We next note that 
\begin{equation}\label{compU*}
\langle U(a)\,,\,q(b)\rangle=L(\alpha(a)^*b)=a^*L(b)=\langle a\,,\,L(b)\rangle.
\end{equation}
Equation~\eqref{compU*} implies that 
\[
\|a^*L(b)\|=\|\langle U(a)\,,\,q(b)\rangle\|\leq \|U(a)\|\,\|q(b)\|=\|a\|\,\|q(b)\|;
\]
thus $\|L(b)\|\leq\|q(b)\|$, and there is a well-defined bounded linear map $T:M_L\to A$ such that $T(q(b))=L(b)$. Now \eqref{compU*}
shows that $U$ is adjointable with adjoint $T$.
\end{proof}

\begin{cor}\label{defUi}
Define maps $U_i:M_{L}^{\otimes i}\to M_L^{\otimes(i+1)}$ by identifying $M_{L}^{\otimes i}$ with $A\otimes_A M_L^{\otimes i}$ and taking
\[
U_i:=U\otimes_A 1_i:M^{\otimes i}=A\otimes_A M_L^{\otimes i}\to M_L\otimes_A M_L^{\otimes i}=M_L^{\otimes(i+1)}.
\]
Then each $U_i$ is an adjointable isometry, and
\begin{enumerate}
\setlength\itemindent{-16pt} 
\item $U_i(a\cdot m)=q(\alpha(a))\otimes_A m$;
\item $U_i^*(q(a)\otimes_Am)=L(a)\cdot m$;
\item $U_{i+1}=U_i\otimes_A1$ and $U_{i+1}^*=U_i^*\otimes_A 1$.
\end{enumerate}
\end{cor}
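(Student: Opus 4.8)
The plan is to verify each assertion by reducing everything to the already-proven facts about the isometry $U$ from Lemma~\ref{defU} and the standard functoriality of the interior tensor product. First I would establish that $U_i$ is a well-defined adjointable isometry: since $U\in\L(A,M_L)$ is adjointable by Lemma~\ref{defU}, the operator $U\otimes_A 1_i\in\L(A\otimes_A M_L^{\otimes i},\,M_L\otimes_A M_L^{\otimes i})$ is adjointable with adjoint $U^*\otimes_A 1_i$, and $(U\otimes_A 1_i)^*(U\otimes_A 1_i)=(U^*U)\otimes_A 1_i=1_A\otimes_A 1_i$, which corresponds to the identity on $M_L^{\otimes i}$ under the canonical unitary $A\otimes_A M_L^{\otimes i}\cong M_L^{\otimes i}$. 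So $U_i$ is an isometry, and everything takes place modulo the identification of $M_L^{\otimes i}$ with $A\otimes_A M_L^{\otimes i}$ via $a\cdot m\leftrightarrow a\otimes_A m$.

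Next I would compute (a), (b), (c) on elementary tensors, which determine the maps by linearity and continuity. For (a): under the identification, $a\cdot m$ corresponds to $a\otimes_A m$, and $(U\otimes_A 1_i)(a\otimes_A m)=U(a)\otimes_A m=q(\alpha(a))\otimes_A m$ by the definition of $U$ in Lemma~\ref{defU}. For (b): the adjoint is $U^*\otimes_A 1_i$, and on $q(a)\otimes_A m$ this gives $U^*(q(a))\otimes_A m=L(a)\otimes_A m$, which corresponds to $L(a)\cdot m$ under the identification; here I use $U^*(q(a))=L(a)$ from Lemma~\ref{defU}. For (c): this is just associativity/compatibility of the interior tensor product with the identifications $M_L^{\otimes(i+1)}\cong M_L^{\otimes i}\otimes_A M_L\cong M_L\otimes_A M_L^{\otimes i}$ — more precisely $U_{i+1}=U\otimes_A 1_{i+1}=(U\otimes_A 1_i)\otimes_A 1=U_i\otimes_A 1$, and taking adjoints gives $U_{i+1}^*=U_i^*\otimes_A 1$. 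One should be a little careful that the iterated tensor identifications are done consistently, but this is routine bookkeeping.

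The main obstacle, such as it is, will be purely notational: keeping track of the several canonical isomorphisms $M_L^{\otimes i}\cong A\otimes_A M_L^{\otimes i}$ and $M_L^{\otimes(i+1)}\cong M_L\otimes_A M_L^{\otimes i}$, and confirming that the formulas in (a)--(c) hold literally (not just up to these isomorphisms) once those identifications are fixed once and for all. Since the paper already works freely with $M_L^{\otimes i}$ and with maps of the form $T\otimes_A 1_{i-j}$ (e.g. in the definition of $C_i$ in \S\ref{cp-algs}), I would simply invoke the standard properties of the interior tensor product of Hilbert modules — that $S\mapsto S\otimes_A 1$ is a $*$-homomorphism $\L(X)\to\L(X\otimes_A Y)$, and more generally that $U\otimes_A 1$ is adjointable with adjoint $U^*\otimes_A 1$ for adjointable $U$ — and present the three computations above on elementary tensors. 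No genuinely hard estimate or new idea is needed; the content is entirely in Lemma~\ref{defU}.
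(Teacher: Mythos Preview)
Your proposal is correct and matches the paper's approach: the paper states the corollary without proof, treating it as an immediate consequence of Lemma~\ref{defU} and the standard functoriality of the interior tensor product, exactly as you outline.
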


With the notation of Corollary~\ref{defUi}, we can now describe  the endomorphism on $C_\infty$.

\begin{thm}\label{CP=St}
Suppose that $(A,\alpha,L)$ is an Exel system such that  $L$ is almost faithful on $\phi^{-1}(\K(M_L))$. Then there is an endomorphism $\beta$ of $C_\infty:=\varinjlim(C_i,\phi_i)$ such that 
\begin{equation}
\beta(\iota^i(T))=\iota^{i+1}(U_iTU_i^*)\ \text{ for $T\in C_i$,}\label{defbeta2}
\end{equation}
and $\beta$ is extendible and injective with range $\overline{\beta}(1)C_\infty\overline{\beta}(1)$. Let $\kappa$ be the isomorphism of $C_\infty$ onto $\O(M_L)^\gamma$ satisfying \eqref{defkappa}, and let $V$ be the isometry in $M(\O(M_L))$ such that $k_M(q(a))=k_A(a)V$ for $a\in A$ (as given by Theorem~\ref{relCPalg=St}). Then $(\kappa,V)$ is a Stacey-covariant representation of $(C_\infty,\beta)$ in $\O(M_L)$, and $\kappa\times V$ is an isomorphism of the Stacey crossed product $C_\infty\times_\beta\N$ onto $\O(M_L)$.
\end{thm}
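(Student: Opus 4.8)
## Proof proposal

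The plan is to transport the conclusions of Theorem~\ref{relCPalg=St} (in the special case $J=\phi^{-1}(\K(M_L))$, so that $\O(J,M_L)=\O(M_L)$) through the isomorphism $\kappa:C_\infty\to\O(M_L)^\gamma$, exactly as Corollary~\ref{Ex=St2} transports them through $\theta$. Define $\beta:=\kappa^{-1}\circ\alpha'\circ\kappa$, where $\alpha'=(\Ad V)|_{\O(M_L)^\gamma}$ is the endomorphism of the core produced by Theorem~\ref{relCPalg=St}. Being a conjugate of $\alpha'$, the map $\beta$ is automatically an injective endomorphism of $C_\infty$, it is extendible (conjugating the strict convergence of an approximate identity), and its range is $\overline\beta(1)C_\infty\overline\beta(1)$ with $\overline\beta(1)=\kappa^{-1}(VV^*)$, since these are all properties of $\alpha'$ established in Theorem~\ref{relCPalg=St} and preserved by any isomorphism. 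Likewise, since $(\id,V)$ is Stacey-covariant for $(\O(M_L)^\gamma,\alpha')$ with $\id\times V$ an isomorphism onto $\O(M_L)$, composing with $\kappa$ shows $(\kappa,V)$ is Stacey-covariant for $(C_\infty,\beta)$ and $\kappa\times V=(\id\times V)\circ(\kappa\times_{*}\id)$ is an isomorphism of $C_\infty\times_\beta\N$ onto $\O(M_L)$.

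The real content, then, is to verify the explicit formula \eqref{defbeta2}: $\beta(\iota^i(T))=\iota^{i+1}(U_iTU_i^*)$ for $T\in C_i$. By definition of $\kappa$ in \eqref{defkappa} and linearity/continuity it suffices to check this on a spanning set, namely on elements $T=\Theta_{m,n}\otimes_A 1_{i-j}\in\K(M_L^{\otimes j})\otimes_A1_{i-j}\subseteq C_i$ with $m,n\in M_L^{\otimes j}$ and $i\geq j$; and since $k_A$ is injective (by the almost-faithfulness hypothesis and \cite[Proposition~2.1]{MS}), $\kappa$ is genuinely an isomorphism so this reduction is legitimate. For such $T$ we have $\kappa(\iota^i(T))=(k_M^{\otimes j},k_A)^{(1)}(\Theta_{m,n})=k_M^{\otimes j}(m)k_M^{\otimes j}(n)^*$. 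Applying $\alpha'=\Ad V$ and using \eqref{propV} (equivalently the formula \eqref{formforalpha'}) twice gives
\[
\alpha'\big(k_M^{\otimes j}(m)k_M^{\otimes j}(n)^*\big)=k_M^{\otimes(j+1)}\big(q(\alpha(\cdot))\otimes_A\cdot\big)\text{-type terms},
\]
which by Corollary~\ref{defUi}(a) is precisely $k_M^{\otimes(j+1)}(U_{j-1}m)k_M^{\otimes(j+1)}(U_{j-1}n)^*$ when $m=a\cdot m'$, $n=b\cdot n'$; unwinding $(k_M^{\otimes(j+1)},k_A)^{(1)}$ this equals $(k_M^{\otimes(j+1)},k_A)^{(1)}(\Theta_{U_{j-1}m,\,U_{j-1}n})=(k_M^{\otimes(j+1)},k_A)^{(1)}(U_{j-1}\Theta_{m,n}U_{j-1}^*)$. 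Now using part~(c) of Corollary~\ref{defUi}, $U_{j-1}\Theta_{m,n}U_{j-1}^*$ tensored up by $1_{i-j}$ equals $U_i(\Theta_{m,n}\otimes_A 1_{i-j})U_i^*=U_iTU_i^*$, and applying $\kappa^{-1}$ and \eqref{defkappa} once more identifies the result as $\iota^{i+1}(U_iTU_i^*)$, as required. (One should also note $U_iTU_i^*\in C_{i+1}$: since $T\in\K(M_L^{\otimes j})\otimes_A1_{i-j}$ and $U_i$ is adjointable, $U_iTU_i^*$ lies in $\K(M_L^{\otimes(j+1)})\otimes_A1_{i-j}\subseteq C_{i+1}$, using $U_i=U\otimes_A 1_i$ and that $\Theta_{U(a),U(b)}=U\Theta_{a,b}U^*$ is compact.)

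The main obstacle is purely bookkeeping: matching the two indexing conventions — the tensor-power bookkeeping of the $U_i$, $\iota^i$, $\phi_i$ and $C_i$ coming from \cite{FMR}, against the $\Ad V$ formula \eqref{formforalpha'} — and checking carefully that the formula is independent of which representative $i\geq j$ one picks (i.e.\ that \eqref{defbeta2} is compatible with the connecting maps $\phi_i$, so that $\beta$ is well defined on the direct limit). This compatibility is exactly the statement $\phi_{i+1}\circ(U_i(\cdot)U_i^*)=(U_{i+1}(\cdot)U_{i+1}^*)\circ\phi_i$ on $C_i$, which is immediate from Corollary~\ref{defUi}(c) since both sides send $T$ to $U_iTU_i^*\otimes_A 1$. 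No genuinely new estimate is needed beyond what Theorem~\ref{relCPalg=St} and Corollary~\ref{defUi} already supply.
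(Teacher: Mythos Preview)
Your proof is correct and follows essentially the same route as the paper: define $\beta:=\kappa^{-1}\circ\alpha'\circ\kappa$, transport the structural properties of $\alpha'$ from Theorem~\ref{relCPalg=St} through $\kappa$, and verify formula~\eqref{defbeta2} on rank-one operators $\Theta_{m,n}$ (with $m,n\in M_L^{\otimes j}$) and then on their tensorings $\Theta_{m,n}\otimes_A 1_{i-j}$ using Corollary~\ref{defUi}(c). One small slip to fix: throughout your second paragraph the isometry should be $U_j$, not $U_{j-1}$, since $U_j:M_L^{\otimes j}\to M_L^{\otimes(j+1)}$; with that correction Corollary~\ref{defUi}(c) gives $U_j\Theta_{m,n}U_j^*\otimes_A1_{i-j}=U_i(\Theta_{m,n}\otimes_A1_{i-j})U_i^*$, which is exactly the identification you need.
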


\begin{proof}
We define $\beta:=\kappa^{-1}\circ\alpha'\circ \kappa$, where $\alpha'$ is the endomorphism from Theorem~\ref{relCPalg=St}. Let $a\cdot m,b\cdot n\in M_L^{\otimes i}$. Then $\kappa\circ \iota^i(\Theta_{a\cdot m,b\cdot n})=k_M^{\otimes i}(a\cdot m)k_M^{\otimes i}(b\cdot n)^*$, and  
\begin{align*}
\kappa\circ\beta\circ\iota^i(\Theta_{a\cdot m,b\cdot n})&=\alpha'\circ\kappa\circ\iota^i(\Theta_{a\cdot m,b\cdot n})\\
&=k_M^{\otimes (i+1)}(q(\alpha(a))\otimes_Am)k_M^{\otimes (i+1)}(q(\alpha(b))\otimes_An)^*\quad\quad\text{(using \eqref{defalpha'})}\\
&=\kappa\circ\iota^{i+1}(\Theta_{q(\alpha(a))\otimes_Am,q(\alpha(b))\otimes_A n})\\
&=\kappa\circ\iota^{i+1}(\Theta_{U_i(a\cdot m),U_i(b\cdot n)})\\
&=\kappa\circ\iota^{i+1}(U_i\Theta_{a\cdot m,b\cdot n}U_i^*).
\end{align*}
This gives \eqref{defbeta2} for $T\in\K(M_L^{\otimes i})$. For $j<i$ and $S\in \K(M_L^{\otimes j})$, we have
\begin{align*}
\beta(\iota^i(S\otimes_A1_{i-j}))&=\beta(\iota^j(S))=\iota^{j+1}(U_jSU_j^*)\\
&=\iota^{i+1}((U_j\otimes_A1_{i-j})(S\otimes_A1_{i-j})(U_j^*\otimes_A1_{i-j}))\\
&=\iota^{i+1}(U_i(S\otimes_A1_{i-j})U^*_i),
\end{align*}
and adding over $j$ gives \eqref{defbeta2} for arbitrary $T$ in $C_i$.
The theorem now follows from Theorem~\ref{relCPalg=St}.
\end{proof}

\begin{remark}
Equation~\eqref{almostfaithful} implies that  $\phi$ is injective if and only if $L$ is almost faithful on $A$.  For a classical system associated to a surjective local homeomorphism $\sigma:X\to X$, the transfer operator $L$ that averages over inverse images is always faithful. Other transfer operators for other surjections $\sigma:X\to X$ need not be faithful--- see \cite[Example~4.7]{BR}. The canonical transfer operators for corner endomorphisms, on the other hand, are never faithful but, as we shall see in \S\ref{sec:St=Ex}, they are often almost faithful. 
\end{remark}

\subsection*{Connections with a construction of Exel}
In \cite[Theorem~6.5]{E2}, Exel shows that if $A$ is unital,  $\alpha$ is injective and unital, and  there is a faithful conditional expectation $E$ of $A$ onto $\alpha(A)$, then his crossed product $A\rtimes_{\alpha,\alpha^{-1}\circ E}\N$ is isomorphic to a Stacey crossed product $\check\AA\times_{\beta'}\N$. The $C^*$-algebra $\check\AA$ is by definition a subalgebra of the $C^*$-algebraic direct limit of a sequence of algebras of the form $\L(M_i)$ for certain Hilbert modules $M_i$. Exel's hypotheses on $\alpha$ imply that $L:=\alpha^{-1}\circ E$ is a faithful transfer operator for $\alpha$ satisfying $L(1)=1$, and hence that $\phi:A\to \L(M_L)$ is injective. Thus the Exel crossed product $A\rtimes_{\alpha,L}\N$ is the Cuntz-Pimsner algebra $\O(M_L)$, and Theorem~\ref{CP=St} gives an isomorphism of $C_\infty\times_{\beta}\N$ onto $A\rtimes_{\alpha,L}\N$. It is natural to ask whether Exel's system $(\check\AA,\beta')$ is the same as the system $(C_\infty,\beta)$ appearing in Theorem~\ref{CP=St}.

Exel's module $M_i$ is the Hilbert module over $\alpha^i(A)$ associated to the expectation $\alpha^i\circ L^i$ of $A$ onto $\alpha^i(A)$ (which he denotes by $\EE_i$), 
and hence is a completion of a copy $q_i(A)$ of $A$. By restricting the action we can view $M_i$ as a module over $\alpha^{i+1}(A)$, and this induces a linear map $j_i:M_i\to M_{i+1}$; Lemma~4.7 of \cite{E2} says that there is a homomorphism $\phi_i:\L(M_i)\to \L(M_{i+1})$ characterised by $\phi_i(T)\circ j_i=j_i\circ T$. (Exel writes the maps $j_i$ as inclusions.) Since $L^i$ is a transfer operator for $\alpha^i$, $\alpha^i\circ L^i$ extends to a self-adjoint projection $\check e_i$ in $\L(M_i)$. 
Exel's $C^*$-algebra $\check\AA$ is the $C^*$-subalgebra of $\varinjlim (\L(M_i),\phi_i)$ generated by the images of $A=\L(M_0)$ and $\{\check e_i:i\in\N\}$. Propositions 4.2 and 4.3 of \cite{E2} say that $\alpha$ and $L$ extend to isometric linear maps $\alpha_i:M_i\to M_{i+1}$ and $L_i:M_{i+1}\to M_i$, Proposition~4.6 of \cite{E2} says that the maps $\beta_i':T\mapsto \alpha_i\circ T\circ L_i$ are injective homomorphisms of $\L(M_i)$ into $\L(M_{i+1})$, and Proposition 4.10 of \cite{E2} says that they induce an endomorphism $\beta'$ of $\varinjlim (\L(M_i),\phi_i)$ which leaves $\check\AA$ invariant and satisfies $\beta'(\check e_i)=\check e_{i+1}$ for $i\geq 0$.

To compare our construction with that of \cite[\S4]{E2}, we use the maps 
\[
V_i:q(a_1)\otimes_A\cdots\otimes_Aq(a_i)\mapsto
q_i\big(a_1\alpha(a_2)\alpha^2(a_3)\cdots \alpha^{i-1}(a_i)\big);
\]
the pairs $(V_i,\alpha^i)$ then form compatible isomorphisms of $(M_L^{\otimes i},A)$ onto $(M_i,\alpha^i(A))$, and induce isomorphisms $\theta_i$ of $\L(M_L^{\otimes i})$ onto $\L(M_i)$. One quickly checks that the isometries $U_i$ of Corollary~\ref{defUi} satisfy
\begin{align*}
V_{i+1}U_iV_i^{-1}(q_i(a))&=V_{i+1}U_i(q(a)\otimes_A1\cdots\otimes_A1\\
&=V_{i+1}(q(1)\otimes_Aq(a)\otimes_A\cdots \otimes q(1))\\
&=q_{i+1}(\alpha(a))=\alpha_i(q_i(a)),
\end{align*}
and similiarly $V_iU_i^*V_{i+1}(q_{i+1}(a))=q_i(L(a))=L_i(q_{i+1}(a))$. Thus our endomorphism $\Ad U_i$ is carried into Exel's $\beta_i'$. The isomorphisms $\theta_i$ combine to give an injection of our direct limit $C_\infty=\varinjlim(C_i,\phi_i)$ into $\varinjlim (\L(M_i),\phi_i)$, and since $t^it^{*i}=\beta^i(1)$ is carried into $(\beta')^i(1)=\check e_i$, the formula \eqref{presentfpas} implies that the range of this injection is $\clsp\{a\check e_ib:a,b\in A\}$, which by \cite[Proposition~4.9]{E2} is precisely $\check\AA$. So $(C_\infty,\beta)$ is indeed isomorphic to $(\check\AA,\beta')$, and Theorem~\ref{CP=St} extends \cite[Theorem~6.5]{E2}.


\section{Stacey crossed products as Exel crossed products}\label{sec:St=Ex}

Stacey crossed products are particularly useful for studying \emph{corner endomorphisms} which map a $C^*$-algebra onto a corner $pAp$ (see \cite{P, BKR,LR2}, for example).
In his original paper on the subject, Exel proved that if $B$ is unital and $\beta\in \End B$ is an injective corner endomorphism, then the Stacey crossed product $B\times_\beta \N$ is an Exel crossed product $B\rtimes_{\beta,K}\N$ for the transfer operator $K:b\mapsto \beta^{-1}(\beta(1)b\beta(1))$ \cite[Theorem~4.7]{E}. Since the endomorphisms $\alpha'$ and $\beta$ appearing in our main theorems are corner endomorphisms, we are interested in a version of this result for nonunital algebras.

\begin{prop}\label{cornerendo}
Suppose that $\beta$ is an extendible endomorphism of a $C^*$-algebra $B$ such that $\beta$ is injective and $\beta(B)=\overline{\beta}(1)B\overline{\beta}(1)$. Then $K:b\mapsto \beta^{-1}(\overline{\beta}(1)b\overline{\beta}(1))$ is a transfer operator for $\beta$, and $K$ is almost faithful on $\overline{B\beta(B)B}=\overline{B\overline{\beta}(1)B}$.
\end{prop}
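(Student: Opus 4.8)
The plan is to verify directly that $K(b):=\beta^{-1}(\overline\beta(1)b\overline\beta(1))$ is well-defined, positive, extends to $M(B)$, satisfies the transfer-operator identity \eqref{transferoponmofa}, and then to establish almost faithfulness on the ideal $\overline{B\beta(B)B}=\overline{B\overline\beta(1)B}$. First I would note that $\overline\beta(1)b\overline\beta(1)$ lies in the corner $\overline\beta(1)B\overline\beta(1)=\beta(B)$ for $b\in B$, so $\beta^{-1}$ makes sense on it because $\beta$ is injective; hence $K:B\to B$ is a well-defined linear map. Positivity of $K$ follows since $\beta^{-1}$ is a (contractive) $*$-isomorphism of $\beta(B)$ onto $B$, hence positive, and $b\mapsto\overline\beta(1)b\overline\beta(1)$ is positive. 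Since $\beta$ is extendible, $\overline\beta$ is a strictly continuous endomorphism of $M(B)$ with $\overline\beta(1)$ a projection; one checks that $\overline\beta$ maps $M(B)$ into the corner $\overline\beta(1)M(B)\overline\beta(1)$ injectively (injectivity on $M(B)$ because strict continuity plus injectivity on $B$), so we may define $\overline K(m):=\overline\beta^{-1}(\overline\beta(1)m\overline\beta(1))$ on $M(B)$; strict continuity of $\overline K$ is inherited from that of $\overline\beta$ and $\overline\beta^{-1}$, and $\overline K(1)=\overline\beta^{-1}(\overline\beta(1))=1$.

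Next I would verify the transfer identity $K(\beta(b)m)=b\overline K(m)$ for $b\in B$, $m\in M(B)$. Since $\beta(b)=\overline\beta(1)\beta(b)\overline\beta(1)$, we get $\overline\beta(1)\beta(b)m\overline\beta(1)=\beta(b)\overline\beta(1)m\overline\beta(1)=\overline\beta(b)\,\overline\beta(1)m\overline\beta(1)=\overline\beta\big(b\,\overline\beta^{-1}(\overline\beta(1)m\overline\beta(1))\big)=\overline\beta(b\overline K(m))$, where we used that $\overline\beta$ is multiplicative and that $b\overline K(m)\in M(B)$ so that $\overline\beta^{-1}$ is applicable after noting $\overline\beta(b\overline K(m))$ sits in the corner. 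Applying $\overline\beta^{-1}$ gives $K(\beta(b)m)=b\overline K(m)$ as required; restricting to $m\in B$ gives the analogous identity for $K$ itself. This shows $K$ is a transfer operator for $\beta$, so $(B,\beta,K)$ is an Exel system.

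For almost faithfulness on the ideal $I:=\overline{B\overline\beta(1)B}$, I would first confirm the asserted equality $\overline{B\beta(B)B}=\overline{B\overline\beta(1)B}$: since $\beta(B)=\overline\beta(1)B\overline\beta(1)\subseteq B\overline\beta(1)B$ we get $\overline{B\beta(B)B}\subseteq\overline{B\overline\beta(1)B}$, and conversely $\overline\beta(1)$ is the strict limit of $\beta(e_\lambda)$ for an approximate identity $\{e_\lambda\}$ of $B$ (by extendibility), so $b\overline\beta(1)b'=\lim b\beta(e_\lambda)b'\in\overline{B\beta(B)B}$. Then suppose $a\in I$ and $K(b^*a^*ab)=0$ for all $b\in B$. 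By definition of $K$ this says $\overline\beta(1)b^*a^*ab\,\overline\beta(1)=0$ for all $b\in B$, i.e. $ab\,\overline\beta(1)=0$ for all $b$, equivalently $aB\overline\beta(1)=0$, hence (taking closed linear spans and using approximate identities) $a\overline{B\overline\beta(1)}=0$ and so $a\overline{B\overline\beta(1)B}=0$, i.e. $aI=0$. Since $a\in I$, applying this with elements approximating $a$ gives $a^*a\in\overline{aI}=\{0\}$... more carefully: $a\in I=\overline{BI}$, so $a=\lim b_k i_k$ with $i_k\in I$, giving $a^*a=\lim a^*b_k i_k$; but $a^*I=(Ia)^*\subseteq(aI)^* $ need not vanish directly, so instead I use that $aI=0$ and $a\in I$ imply $a\cdot a=0$ only after symmetrizing — the clean route is: $aI=0$ forces $aa^*\in aI$? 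No. The correct finish: from $ab\overline\beta(1)=0$ for all $b$ we also get, replacing the computation, that $\|a i\|^2=\|i^*a^*ai\|$ with $i\in I$ approximating so that $a^*ai$ is controlled; since $aI=0$ and $a\in I$ means there is a net $i_\lambda\in I$ (e.g. from an approximate identity of $I$) with $ai_\lambda\to a$, but $ai_\lambda=0$, hence $a=0$. This last step — getting an approximate identity of $I$ into the game — is the main obstacle, and it works precisely because $I$ is an ideal of $B$ with its own approximate identity, and $a\in I$, so $a=\lim a i_\lambda=0$.
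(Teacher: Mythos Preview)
Your argument follows essentially the same route as the paper, and the almost-faithfulness part is correct. Two small points are worth noting.

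First, in defining $\overline K$ on $M(B)$ you write $\overline K(m)=\overline\beta^{-1}(\overline\beta(1)m\overline\beta(1))$, but you only checked that $\overline\beta$ is \emph{injective} into the corner $\overline\beta(1)M(B)\overline\beta(1)$; you have not shown that $\overline\beta(1)m\overline\beta(1)$ actually lies in the range of $\overline\beta$, so $\overline\beta^{-1}$ is not yet known to apply. The paper's fix is to observe that, since $\beta:B\to\overline\beta(1)B\overline\beta(1)$ is a $*$-isomorphism, its extension $\overline\beta$ is an isomorphism of $M(B)$ onto $M(\overline\beta(1)B\overline\beta(1))$, and that $\overline\beta(1)m\overline\beta(1)$ (for $m\in M(B)$) multiplies the corner and so defines an element of $M(\overline\beta(1)B\overline\beta(1))$; one then applies $(\overline\beta)^{-1}$ viewed as $M(\overline\beta(1)B\overline\beta(1))\to M(B)$.

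Second, your closing step --- deducing $a=0$ from $aI=0$ and $a\in I$ via an approximate identity of $I$ --- is fine, but the paper's finish is one line: the ideal $I=\overline{B\beta(B)B}$ is $*$-closed, so $a^*\in I$, whence $aa^*\in aI=\{0\}$ and $a=0$. This avoids the detour through the several abandoned attempts in your write-up.
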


\begin{proof}
It is easy to see that $K$ is a positive linear map with norm $1$, and a very quick calculation shows that it satisfies $K(\beta(b)c)=bK(c)$ for $b,c\in B$. Since $\beta$ is injective, the extension $\overline\beta$ is an isomorphism of $M(B)$ onto $M(\overline{\beta}(1)B\overline{\beta}(1))$. Multipliers of the form $\overline{\beta}(1)m\overline{\beta}(1)$ in $M(B)$ multiply the corner $\overline{\beta}(1)B\overline{\beta}(1)$, so we can define $\overline{K}:M(B)\to M(B)$ by $\overline{K}(m)=(\overline{\beta})^{-1}(\overline{\beta}(1)m\overline{\beta}(1))$, and this has the required property $K(\beta(b)m)=b\overline{K}(m)$ for $b\in B$ and $m\in M(B)$.

To see that $K$ is almost faithful, we suppose that $b\in\overline{B\beta(B)B}$  satisfies $K(c^*b^*bc)=0$ for all $c\in B$, and  prove that $b=0$. We have
\begin{align*}
K(c^*b^*bc)=0 \text{ for all $c\in B$}
&\Longrightarrow \overline{\beta}(1)(c^*b^*bc)\overline{\beta}(1)=0\text{ for all $c\in B$}\\
&\Longrightarrow bc\overline{\beta}(1)=0\text{ for all $c\in B$}\\
&\Longrightarrow bc\overline{\beta}(1)d=0\text{ for all $c,d\in B$}\\
&\Longrightarrow b(\overline{B\beta(B)B})=0\\
&\Longrightarrow bb^*=0,
\end{align*}
which implies $b=0$. Thus $K$ is almost faithful. An approximate identity argument shows that $\overline{B\overline{\beta}(1)B}\subset \overline{B\beta(B)B}$, and the reverse inclusion holds because the range of $\beta$ is $\overline{\beta}(1)B\overline{\beta}(1)$.
\end{proof}

We can now give a generalisation of \cite[Theorem~4.7]{E}.

\begin{thm}\label{extendExel}
Suppose that $\beta$ is an extendible endomorphism of a $C^*$-algebra $B$ such that $\beta$ is injective and has range $\overline{\beta}(1)B\overline{\beta}(1)$, and $(B,\beta,K)$ is the Exel system of Proposition~\ref{cornerendo}. Let $(k_M,k_B)$ be the universal  representation of $M_K$ in $B\rtimes_{\beta,K}\N:=\O(K_\beta,M_K)$, and let $V$  be the isometry in $M(\O(K_\beta,M_K))$ such that $k_M(q(b))=k_B(b)V$ (as given by Theorem~\ref{relCPalg=St}). Then $(k_B,V)$ is a Stacey-covariant representation of $(B,\beta)$, and $k_B\times V$ is an isomorphism of the Stacey crossed product $B\times_\beta\N$ onto $B\rtimes_{\beta,K}\N$.
\end{thm}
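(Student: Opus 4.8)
The plan is to verify the Stacey-covariance of $(k_B,V)$ directly and then apply the dual-invariant uniqueness theorem (Proposition~\ref{diut}). First I would check covariance. By Proposition~\ref{cornerendo}, $(B,\beta,K)$ is a genuine Exel system, so Theorem~\ref{relCPalg=St} applies and provides the isometry $V\in M(\O(K_\beta,M_K))$ with $k_M(q(b))=k_B(b)V$. The identity $k_B(b)V=k_M(q(b))=k_M(1\cdot q(b)\cdot 1)$, combined with the bimodule relations $q(b)\cdot c=q(b\beta(c))$ and $d\cdot q(b)=q(db)$ and the defining property \eqref{propV} of $V$, should give $Vk_B(b)=k_M(q(\beta(b)))\cdots$; more precisely, \eqref{propV} with $i=j=0$, $m=n$ trivial yields $Vk_B(b)=k_M(q(\beta(b)))\cdot(\text{something})$, and then $k_M(q(\beta(b)))=k_B(\beta(b))V$, so $Vk_B(b)=k_B(\beta(b))V$. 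This is a Toeplitz-type relation; to upgrade it to the Stacey relation $k_B(\beta(b))=Vk_B(b)V^*$ I multiply on the right by $V^*$ and use $VV^*$-idempotency together with the fact that $\beta(b)\in\overline{\beta}(1)B\overline{\beta}(1)$, which forces $k_B(\beta(b))VV^*=k_B(\beta(b))$ (because $\overline{\alpha'}(1)=VV^*$ implements the corner, as in Theorem~\ref{relCPalg=St} applied to the core, and $k_B(\beta(b))$ lives in that corner). Combined with $V^*V=1$ (Theorem~\ref{relCPalg=St}) this gives $Vk_B(b)V^*=k_B(\beta(b))$, so $(k_B,V)$ is Stacey-covariant.

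Next I would produce the homomorphism and check surjectivity. Stacey-covariance gives a nondegenerate homomorphism $k_B\times V:B\times_\beta\N\to \O(K_\beta,M_K)$ with $(k_B\times V)\circ i_B=k_B$ and $(k_B\times V)(v)=V$. For surjectivity, recall from Theorem~\ref{relCPalg=St} that $\O(K_\beta,M_K)$ is generated by $k_B(B)$ and $k_M(M_K)=\overline{k_B(B)V}$ (since $k_M(q(b))=k_B(b)V$ and $q(B)$ is dense in $M_K$); hence the range of $k_B\times V$, being a $C^*$-algebra containing $k_B(B)$ and $k_B(B)V$, is all of $\O(K_\beta,M_K)$.

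For injectivity I would invoke Proposition~\ref{diut} with $(D,\gamma)=(\O(K_\beta,M_K),\text{gauge action})$. We need $k_B$ faithful on $B$: since $K$ is almost faithful on $\overline{B\beta(B)B}=K_\beta$ by Proposition~\ref{cornerendo}, equation \eqref{almostfaithful} gives that $\phi|_{K_\beta}$ is injective, and then \cite[Proposition~2.1]{MS} (as invoked in \S\ref{cp-algs}) shows $k_B:B\to\O(K_\beta,M_K)$ is injective. The gauge action $\gamma$ on $\O(K_\beta,M_K)=\O(K_\beta,M_K)$ fixes $k_B(B)$ and scales $k_M(M_K)$, hence $\gamma_z(V)=zV$ because $k_M(q(e_\lambda))\to V$ strictly and $\gamma_z$ is strictly continuous on bounded sets; thus $\gamma_z(k_B(b))=k_B(b)$ and $\gamma_z(V)=zV$. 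Proposition~\ref{diut} then yields that $k_B\times V$ is faithful, completing the proof that it is an isomorphism of $B\times_\beta\N$ onto $B\rtimes_{\beta,K}\N$.

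The main obstacle is the passage from the Toeplitz-type relation $Vk_B(b)=k_B(\beta(b))V$ to the genuine Stacey relation $k_B(\beta(b))=Vk_B(b)V^*$; this is exactly the point where injectivity of $\beta$ and the corner condition $\beta(B)=\overline{\beta}(1)B\overline{\beta}(1)$ are essential (for a general endomorphism these relations are inequivalent, as noted in the introduction), and care is needed to see that $k_B(\beta(b))$ is absorbed by the projection $VV^*=\overline{\alpha'}(1)$ restricted appropriately — equivalently, that conjugation by $V$ already lands in the core and agrees there with $\alpha'$ on the copy of $B=k_B(B)\subset C_{K_\beta}$.
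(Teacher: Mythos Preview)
Your overall architecture matches the paper's: verify Stacey covariance, then use the dual-invariant uniqueness theorem (Proposition~\ref{diut}) together with injectivity of $k_B$ for faithfulness, and finish with surjectivity from the generating set $k_B(B)\cup k_B(B)V$. The surjectivity and uniqueness parts are fine (though the correct reference for injectivity of $k_B$ on the \emph{relative} Cuntz--Pimsner algebra $\O(K_\beta,M_K)$ is \cite[Theorem~4.3]{BRV}, not \cite[Proposition~2.1]{MS}, which treats only the full Cuntz--Pimsner algebra).

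The gap is in your derivation of Stacey covariance. You correctly obtain the Toeplitz relation $Vk_B(b)=k_B(\beta(b))V$, and you correctly identify that the passage to $k_B(\beta(b))=Vk_B(b)V^*$ comes down to showing $k_B(\beta(b))VV^*=k_B(\beta(b))$. But your justification---``$\overline{\alpha'}(1)=VV^*$ implements the corner \dots\ and $k_B(\beta(b))$ lives in that corner''---is circular. Theorem~\ref{relCPalg=St} tells you that the range of $\alpha'$ is the corner $VV^*C_{K_\beta}VV^*$, but $\alpha'(k_B(b))=Vk_B(b)V^*$, so asserting that $k_B(\beta(b))$ lies in that corner is equivalent to asserting $k_B(\beta(b))=Vk_B(b)V^*$, which is exactly what you are trying to prove. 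Likewise, the fact that $\beta(b)\in\overline{\beta}(1)B\overline{\beta}(1)$ would only help if you knew $\overline{k_B}(\overline{\beta}(1))=VV^*$, and you have not established that.

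What is missing is any use of the coisometry condition that distinguishes $\O(K_\beta,M_K)$ from $\TT(M_K)$; without it your argument would apply verbatim in the Toeplitz algebra, where Stacey covariance fails. The paper closes the gap via Lemma~\ref{comprank1}: $\Theta_{q(\beta(b)),q(e_\lambda)}=\phi(\beta(b)\overline{\beta}(1)e_\lambda)\to\phi(\beta(b))$ in norm, which shows both that $\beta(b)\in K_\beta$ and (by coisometry on $K_\beta$) that $(k_M,k_B)^{(1)}(\Theta_{q(\beta(b)),q(e_\lambda)})\to k_B(\beta(b))$; computing the same limit as $k_B(\beta(b))VV^*k_B(e_\lambda)\to k_B(\beta(b))VV^*$ then gives $k_B(\beta(b))=k_B(\beta(b))VV^*=Vk_B(b)V^*$. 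Your proposal should insert exactly this step (or, equivalently, use Lemma~\ref{comprank1} to prove $\overline{k_B}(\overline{\beta}(1))=VV^*$ first).
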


The proof of Theorem~\ref{extendExel} uses a simple lemma which we will need again.

\begin{lemma}\label{comprank1}
Suppose that $(B,\beta,K)$ are as in Proposition~\ref{cornerendo}. Then for every $b,c\in B$, we have $\Theta_{q(b),q(c)}=\phi(b\overline{\beta}(1)c^*)$. \end{lemma}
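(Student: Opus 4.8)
The plan is to compute both sides of the claimed identity $\Theta_{q(b),q(c)} = \phi(b\overline{\beta}(1)c^*)$ as operators on the dense subspace $q(B) \subseteq M_K$, and then extend by continuity. Recall that $M_K$ is the Hilbert module associated to the Exel system $(B,\beta,K)$, with pairing $\langle q(x)\,,\, q(y)\rangle = K(x^*y)$, left action $\phi(b)q(x) = q(bx)$, and right action $q(x)\cdot b = q(x\beta(b))$.

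First I would evaluate the rank-one operator. For $x \in B$ we have, directly from the definition of $\Theta_{m,n}$,
\[
\Theta_{q(b),q(c)}(q(x)) = q(b)\cdot\langle q(c)\,,\, q(x)\rangle = q(b)\cdot K(c^*x) = q(b\,\beta(K(c^*x))).
\]
Now using the explicit formula $K(y) = \beta^{-1}(\overline{\beta}(1)y\overline{\beta}(1))$ from Proposition~\ref{cornerendo}, we get $\beta(K(c^*x)) = \overline{\beta}(1)c^*x\overline{\beta}(1)$. Since $x \in B$, the product $x\overline{\beta}(1)$ lies in $B$ (as $\overline{\beta}(1)$ is a multiplier), so this makes sense and we obtain
\[
\Theta_{q(b),q(c)}(q(x)) = q\big(b\,\overline{\beta}(1)\,c^*\,x\,\overline{\beta}(1)\big).
\]

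Next I would evaluate the left-multiplication operator $\phi(b\overline{\beta}(1)c^*)$ on $q(x)$. Here one must be slightly careful: $b\overline{\beta}(1)c^* \in B$ because $b\overline{\beta}(1) \in B$, so $\phi(b\overline{\beta}(1)c^*)$ is a genuine element of $\phi(B) \subseteq \L(M_K)$, and
\[
\phi(b\overline{\beta}(1)c^*)(q(x)) = q\big(b\,\overline{\beta}(1)\,c^*\,x\big).
\]
Comparing the two displays, the difference is the trailing factor $\overline{\beta}(1)$ inside $q(\cdot)$. To reconcile them I would note that in $M_K$ we have $q(y\overline{\beta}(1)) = q(y)$ for all $y \in B$: indeed $\langle q(y) - q(y\overline{\beta}(1))\,,\, q(y) - q(y\overline{\beta}(1))\rangle = K\big((y - y\overline{\beta}(1))^*(y - y\overline{\beta}(1))\big)$, and expanding and using $K(\beta(1)z) = K(z)$ together with $\overline{\beta}(1)^* = \overline{\beta}(1)$ shows this pairing vanishes; alternatively, $q(y\overline{\beta}(1)) = q(y)\cdot 1_B = q(y)$ directly from the right-action formula $q(y)\cdot 1 = q(y\overline{\beta}(1))$ once one observes $\overline{\beta}(1) = \overline{\beta}(1_{M(B)})$. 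Applying this with $y = b\overline{\beta}(1)c^*x$ gives $q(b\overline{\beta}(1)c^*x\overline{\beta}(1)) = q(b\overline{\beta}(1)c^*x)$, so the two operators agree on the dense set $q(B)$ and hence are equal on $M_K$.

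The main obstacle, such as it is, is the bookkeeping around $\overline{\beta}(1)$: one needs that $q(y\overline{\beta}(1)) = q(y)$, which is the statement that the right action of the multiplier unit is trivial, and one needs $b\overline{\beta}(1) \in B$ so that all the expressions $\phi(b\overline{\beta}(1)c^*)$ and $q(b\overline{\beta}(1)c^*x)$ live in the right places — both are immediate once recalled, so the proof is genuinely short. I would also remark that this lemma in particular shows $\phi(b\overline{\beta}(1)c^*) \in \K(M_K)$, which is presumably how it gets used in the proof of Theorem~\ref{extendExel}.
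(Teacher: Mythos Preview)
Your proof is correct and follows essentially the same route as the paper's: compute both sides on $q(x)$, reduce to showing that the trailing $\overline{\beta}(1)$ can be dropped inside $q(\cdot)$, and verify this via the inner product (the paper phrases this as $\|q(b\overline{\beta}(1)a\overline{\beta}(1)) - q(b\overline{\beta}(1)a)\| = 0$). One caveat: your alternative justification via ``$q(y)\cdot 1_B$'' only makes sense when $B$ is unital, which is not assumed here, so rely on the inner-product calculation instead.
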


\begin{proof} 
We take  $d\in B$ and compute:
\begin{equation}\label{eq-temp}
\Theta_{q(b),q(c)}(q(d))=q(b)\cdot\langle q(c)\,,\, q(d)\rangle=q(b\beta( K(c^*d)))=q(b\overline{\beta}(1)(c^*d)\overline{\beta}(1)).
\end{equation}
A direct calculation shows that $\|q(b\overline\beta(1)a\overline\beta(1))-q(b\overline\beta(1)a)\|=0$ for all $a\in B$, so \eqref{eq-temp} gives
\[\Theta_{q(b),q(c)}(q(d))
=q(b\overline{\beta}(1)(c^*d))
=\phi(b\overline{\beta}(1)c^*)(q(d)).\qedhere
\]
\end{proof}

\begin{proof}[Proof of Theorem~\ref{extendExel}] We choose an approximate identity $\{e_\lambda\}$ for $B$, and then Lemma~\ref{comprank1} implies that $\Theta_{q(\beta(b)),q(e_\lambda)}$ converges in norm in $\L(M_K)$ to $\phi(\beta(b))$. This implies, first, that $\beta(b)$ belongs to $K_\beta:=\overline{B\beta(B)B}\cap \phi^{-1}(\K(M_K))$, and, second, that 
\begin{equation}\label{otoh}
(k_M,k_B)^{(1)}(\Theta_{q(\beta(b)),q(e_\lambda)})\to (k_M,k_B)^{(1)}(\phi(\beta(b)))=k_B(\beta(b)).
\end{equation} 
But
\[
(k_M,k_B)^{(1)}(\Theta_{q(\beta(b)),q(e_\lambda)})=k_M(q(\beta(b)))k_M(q(e_\lambda))^*=k_B(\beta(b))VV^*k_B(e_\lambda),
\]
and $k_B$ is nondegenerate by \cite[Corollary~3.5]{BRV}, so 
\begin{equation}\label{oto}
(k_M,k_B)^{(1)}(\Theta_{q(\beta(b)),q(e_\lambda)})\to k_B(\beta(b))VV^*=Vk_B(b)V^*.
\end{equation}
Together, \eqref{otoh} and \eqref{oto} imply that $k_B(\beta(b))=Vk_B(b)V^*$, which is Stacey covariance.

The induced homomorphism $k_B\times V:B\times_\beta \N\to \O(K_\beta, M_K)$ is equivariant for the dual action $\hat\beta$ and the gauge action $\gamma$.  By Proposition~\ref{cornerendo}, the transfer operator $K$ is almost faithful on $K_\beta$, and so Theorem~4.3 of \cite{BRV} implies that $k_B:B\to \O(K_\beta, M_K)$ is injective. Thus the dual-invariant uniqueness theorem (Proposition~\ref{diut}) implies that $k_B\times V$ is injective, and since its range contains all the generators $k_B(B)\cup k_B(B)V=k_B(B)\cup k_M(q(B))$, it is an isomorphism.

Finally, recall that we are identifying $\O(K_\beta, M_K)$ with $B\rtimes_{\beta, K}\N$ using \cite[Theorem~4.1]{BRV} (see Remark~\ref{idcpCP}).
\end{proof} 

Theorem~\ref{extendExel} applies to the endomorphism $\alpha'$ of Theorem~\ref{relCPalg=St} and hence to the endomorphism $\beta$ of Corollary~\ref{Ex=St2}. Together, Theorem~\ref{extendExel} and Corollary~\ref{Ex=St2} give an alternative description of every Exel crossed product as an Exel crossed product of a larger algebra:

\begin{cor}\label{St=Ex}
Suppose that $(A,\alpha,L)$ is an Exel system. Let $\beta$ be the endomorphism of $B:=(A\rtimes_{\alpha,L}\N)^{\hat\alpha}$ described in Corollary~\ref{Ex=St2}, and let $K$ be the transfer operator for $(B, \beta)$ described in Proposition~\ref{cornerendo}.  Let $V$ be the isometry in $M(B\rtimes_{\beta,K}\N)$ such that $k_{M_K}(q(b))=k_B(b)V$ for $b\in B$. Then there is an isomorphism $\Upsilon$ of $A\rtimes_{\alpha,L}\N$ onto $B\rtimes_{\beta,K}\N$ such that $\Upsilon\circ k_A=k_B\circ k_A$ and $\Upsilon(t)=V$.
\end{cor}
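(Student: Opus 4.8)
The plan is to produce $\Upsilon$ as a composite of two isomorphisms that are already in hand. By Corollary~\ref{Ex=St2}, $\beta$ is an extendible, injective endomorphism of $B=(A\rtimes_{\alpha,L}\N)^{\hat\alpha}$ with $\overline\beta(1)=tt^*$ and range $\overline\beta(1)B\overline\beta(1)$, so $(B,\beta)$ satisfies the hypotheses of Proposition~\ref{cornerendo}; the transfer operator $K$ of Proposition~\ref{cornerendo} is precisely the one named in the statement. Hence Theorem~\ref{extendExel} applies to $(B,\beta,K)$ and gives an isomorphism $k_B\times V$ of the Stacey crossed product $B\times_\beta\N$ onto $B\rtimes_{\beta,K}\N$, where $V$ is the isometry with $k_{M_K}(q(b))=k_B(b)V$ for $b\in B$ --- exactly the $V$ of the corollary. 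On the other hand, Corollary~\ref{Ex=St2} itself gives an isomorphism $\id\times t$ of $B\times_\beta\N$ onto $A\rtimes_{\alpha,L}\N$.

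First I would set $\Upsilon:=(k_B\times V)\circ(\id\times t)^{-1}$, an isomorphism of $A\rtimes_{\alpha,L}\N$ onto $B\rtimes_{\beta,K}\N$. To verify the two identities, recall that the universal property of $B\times_\beta\N$ gives $(\id\times t)\circ i_B=\id_B$ (the inclusion $B\hookrightarrow A\rtimes_{\alpha,L}\N$) and $(\id\times t)(v)=t$, so $(\id\times t)^{-1}$ sends $b\in B\subseteq A\rtimes_{\alpha,L}\N$ to $i_B(b)$ and sends $t$ to $v$. Since $\hat\alpha_z\circ k_A=k_A$, the image $k_A(A)$ lies in $B$, so for $a\in A$ we get $(\id\times t)^{-1}(k_A(a))=i_B(k_A(a))$, and therefore $\Upsilon(k_A(a))=(k_B\times V)(i_B(k_A(a)))=k_B(k_A(a))$ using $(k_B\times V)\circ i_B=k_B$; likewise $\Upsilon(t)=(k_B\times V)(v)=V$.

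There is essentially no obstacle here beyond bookkeeping: the substance is packaged into Corollary~\ref{Ex=St2} and Theorem~\ref{extendExel}, and the only points needing care are that the objects $K$ and $V$ in the statement are literally the ones produced by those results (true by construction) and that $k_A$ takes values in the fixed-point algebra $B$, so that the composite is defined. If one preferred to avoid writing $(\id\times t)^{-1}$, one could instead observe that $(k_B\circ\id,\,V)$ pulled back along $\id\times t$ is a Stacey-covariant representation of $(B,\beta)$ on $A\rtimes_{\alpha,L}\N$ compatible with the gauge action, and invoke the dual-invariant uniqueness theorem (Proposition~\ref{diut}) directly; but composing the two established isomorphisms is cleaner and keeps track of the normalisations automatically.
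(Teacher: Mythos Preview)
Your proof is correct and is essentially the paper's own argument: the paper defines $\Upsilon=\Upsilon_2\circ\Upsilon_1^{-1}$ where $\Upsilon_1=\id\times t$ (from Corollary~\ref{Ex=St2}) and $\Upsilon_2=k_B\times V$ (from Theorem~\ref{extendExel}), exactly as you do. You supply more detail than the paper in checking the identities $\Upsilon\circ k_A=k_B\circ k_A$ and $\Upsilon(t)=V$, but the structure is identical.
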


\begin{proof}
We let $\Upsilon_1:B\times_\beta \N\to A\rtimes_{\alpha,L}\N$ be the isomorphism of Corollary~\ref{Ex=St2}, let $\Upsilon_2:B\times_\beta\N\to B\rtimes_{\beta,K}\N$ be the isomorphism of Theorem~\ref{extendExel}, and then 
$\Upsilon=\Upsilon_2\circ\Upsilon_1^{-1}$ has the required properties.
\end{proof}

The Stacey system $(C_\infty,\beta)$ appearing in Theorem~\ref{CP=St} also involves a corner endomorphism, and we can apply Theorem~\ref{extendExel} to this system. We show below that the ideal $K_\beta$ in $C_\infty$ is $\phi^{-1}(\K(M_K))$, and hence the relative Cuntz-Pimsner algebra $\O(K_\beta,M_K)$ is the Cuntz-Pimsner algebra $\O(M_K)$.  For the proof we need the following general lemma.

\begin{lemma}\label{cornervsK}
Suppose that $(B,\beta,K)$ is the Exel system described in Proposition~\ref{cornerendo}. Then 
\begin{enumerate}\setlength\itemindent{-16pt}
\item $\K(M_K)=\phi\big(\overline{B\beta(B)B}\big)$, and 
\item $\overline{B\beta(B)B}\cap \ker\phi=\{0\}$. 
\end{enumerate}
If the ideal $\overline{B\beta(B)B}$ is essential in $B$, then $\phi$ is injective and 
\[
K_\beta=\overline{B\beta(B)B}=\phi^{-1}(\K(M_K)).
\]
\end{lemma}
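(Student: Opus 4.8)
The plan is to read the whole statement off Lemma~\ref{comprank1}, Proposition~\ref{cornerendo}, and the identity~\eqref{almostfaithful}; essentially no new computation is required.

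For part~(a), I would start from the fact that $q(B)$ is dense in $M_K$ and that $(m,n)\mapsto\Theta_{m,n}$ is jointly continuous, so that $\K(M_K)=\clsp\{\Theta_{q(b),q(c)}:b,c\in B\}$. Lemma~\ref{comprank1} rewrites $\Theta_{q(b),q(c)}$ as $\phi(b\overline{\beta}(1)c^*)$. Since $\overline{B\overline{\beta}(1)B}$ is a closed ideal of $B$, hence a $C^*$-subalgebra, its image under the $*$-homomorphism $\phi$ is closed and is the closed span of the elements $\phi(b\overline{\beta}(1)c^*)$; therefore $\K(M_K)=\phi(\overline{B\overline{\beta}(1)B})$. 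Proposition~\ref{cornerendo} already supplies $\overline{B\overline{\beta}(1)B}=\overline{B\beta(B)B}$, which finishes~(a).

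For part~(b), I would invoke the equivalence recorded just after~\eqref{almostfaithful}: for an ideal $I$, the restriction $\phi|_I$ is injective if and only if $K$ is almost faithful on $I$ (one direction uses $\|\phi(a)q(b)\|^2=K(b^*a^*ab)$, the other uses that $q(B)$ is dense in $M_K$). Proposition~\ref{cornerendo} states precisely that $K$ is almost faithful on $\overline{B\beta(B)B}$, so $\phi$ is injective on that ideal, which is exactly the assertion $\overline{B\beta(B)B}\cap\ker\phi=\{0\}$.

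For the final claim, assume $\overline{B\beta(B)B}$ is essential in $B$. Then $\ker\phi$ is a closed ideal of $B$ which by~(b) meets the essential ideal $\overline{B\beta(B)B}$ only in $\{0\}$, so $\ker\phi=\{0\}$ and $\phi$ is injective. Combining injectivity with~(a): if $\phi(b)\in\K(M_K)=\phi(\overline{B\beta(B)B})$, then $\phi(b)=\phi(c)$ for some $c\in\overline{B\beta(B)B}$, whence $b=c\in\overline{B\beta(B)B}$; thus $\phi^{-1}(\K(M_K))=\overline{B\beta(B)B}$, and consequently $K_\beta=\overline{B\beta(B)B}\cap\phi^{-1}(\K(M_K))=\overline{B\beta(B)B}$. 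The only mildly delicate points are the standard facts that a $*$-homomorphic image of a $C^*$-algebra is closed (used in~(a)) and that a nonzero ideal cannot intersect an essential ideal trivially (used in the last step); the substantive content is already packaged in Lemma~\ref{comprank1} and Proposition~\ref{cornerendo}, so I do not anticipate a genuine obstacle.
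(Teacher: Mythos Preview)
Your proof is correct, and for parts~(a) and the final claim it matches the paper's argument essentially line for line (Lemma~\ref{comprank1} plus $\overline{B\overline{\beta}(1)B}=\overline{B\beta(B)B}$ for~(a), and the essential-ideal argument followed by applying $\phi^{-1}$ to~(a) for the final claim).

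For part~(b) you take a slightly different route from the paper. The paper does a direct computation characterising $\ker\phi$ completely: it shows $\phi(b)=0\iff b\big(B\overline{\beta}(1)B\big)=\{0\}$, i.e.\ $\ker\phi$ is the annihilator of the ideal $\overline{B\overline{\beta}(1)B}$, and then uses that the intersection of two ideals equals the closed span of their products. You instead cite the equivalence recorded after~\eqref{almostfaithful} (that $\phi|_I$ is injective iff $K$ is almost faithful on $I$) together with the almost-faithfulness already established in Proposition~\ref{cornerendo}. Your argument is shorter and avoids repeating a computation already done in that proposition; the paper's version buys the extra information that $\ker\phi$ is exactly the annihilator of $\overline{B\beta(B)B}$, which is a bit stronger than~(b) but not needed elsewhere.
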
 

\begin{proof}
The equality in (a) follows from Lemma~\ref{comprank1} and the identity $\overline{B\beta(B)B}=\overline{B\overline{\beta}(1)B}$ from Proposition~\ref{cornerendo}. For (b), we let $b\in B$, and then
\begin{align*}
\phi(b)=0&\Longleftrightarrow q(bc)=\phi(b)(q(c))=0\text{ for all $c\in B$}\\
&\Longleftrightarrow K(c^*b^*bc)=0\text{ for all $c\in B$}\\
&\Longleftrightarrow \overline{\beta}(1)(c^*b^*bc)\overline{\beta}(1)=0\text{ for all $c\in B$}\\
&\Longleftrightarrow bc\overline{\beta}(1)=0\text{ for all $c\in B$}\\
&\Longleftrightarrow b\big(B\overline{\beta}(1)B\big)=\{0\};
\end{align*}
since the intersection of two ideals $I$ and $J$ is $\clsp\{ij:i\in I,\;j\in J\}$, this implies (b). If $\overline{B\beta(B)B}$ is essential, then it must meet every non-zero ideal non-trivially, so (b) implies that $\ker\phi=\{0\}$. Now applying $\phi^{-1}$ to both sides of (a) gives the last assertion.
\end{proof}

\begin{prop}\label{CP=CP}
Suppose that $(A,\alpha,L)$ is an Exel system such that $L$ is almost faithful on $\phi^{-1}(\K(M_L))$. Let $(C_\infty,\beta)$ be the Stacey system constructed in Theorem~\ref{CP=St} and let $K:b\mapsto \beta^{-1}\big(\overline{\beta}(1)b\overline{\beta}(1)\big)$ be the transfer operator for $(C_\infty, \beta)$ from Proposition~\ref{cornerendo}.
\begin{enumerate}
\item\label{CP=CPa} The ideal $J:=\overline{C_\infty\beta(C_\infty)C_\infty}$ is essential in $C_\infty$.
\item\label{CP=CPb} $K_\beta=J=\phi^{-1}(\K(M_K))$.
\item\label{CP=CPc} Let $V$ be the isometry in $M(\O(M_K))$ such that $k_M(q(c))=k_{C_\infty}(c)V$ for $c\in C_\infty$. Then $(k_{C_\infty},V)$ is a Stacey-covariant representation of $(C_\infty,\beta)$ in $\O(M_K)$, and $k_{C_\infty}\times V$ is an isomorphism of the Stacey crossed product $C_\infty\times_\beta \N$ onto the Exel crossed product $C_\infty\rtimes_{\beta, K}\N=\O(M_K)$. 
\end{enumerate}
\end{prop}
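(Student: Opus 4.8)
The strategy is to verify the three assertions in order, with parts (b) and (c) following quickly from part (a) via Lemma~\ref{cornervsK} and Theorem~\ref{extendExel}. So the real work is part (a): showing that the ideal $J=\overline{C_\infty\beta(C_\infty)C_\infty}$ is essential in $C_\infty$.

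\smallskip

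For part (a), I would argue as follows. We have $J=\overline{C_\infty\,\overline{\beta}(1)\,C_\infty}$ by Proposition~\ref{cornerendo}, and by Theorem~\ref{CP=St} the projection $\overline{\beta}(1)$ corresponds under $\kappa$ to $VV^*$ in $M(\O(M_L))$, i.e.\ to $\beta(1)=t\,t^*$ in the picture where $C_\infty$ is the core $\O(M_L)^\gamma$. Unwinding the direct-limit description, $\overline{\beta}(1)=\iota^1(1_1)$ viewed in $C_1\subset C_\infty$, so $J$ is the closure of $\bigcup_i \iota^i(C_i)\,\iota^1(1_1)\,\iota^i(C_i)$. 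The point is that for $i\geq 1$ the compact operators $\K(M_L^{\otimes i})\subset C_i$ already sit inside $\iota^i(C_i)\iota^1(1_1)\iota^i(C_i)$: indeed $1_1\otimes_A 1_{i-1}=\phi_{i}\cdots(1_1)$ acts as the identity on the submodule, and a rank-one operator $\Theta_{q(a_1)\otimes m,\,q(b_1)\otimes n}\in\K(M_L^{\otimes i})$ can be written as $\Theta_{q(a_1),q(a_1)}^{1/\text{loc}}\otimes\cdots$ — more precisely one factors $\Theta_{q(a_1)\otimes m,\,q(b_1)\otimes n}=(\Theta_{q(a_1),q(a_1)}\otimes 1_{i-1})\cdot\Theta_{q(a_1)\otimes m,\,q(b_1)\otimes n}\cdot(\Theta_{q(b_1),q(b_1)}\otimes 1_{i-1})$ up to a positive-scalar rescaling of $a_1,b_1$, and $\Theta_{q(a_1),q(a_1)}\otimes 1_{i-1}=\phi_{i-1}(\cdots\phi_1(\Theta_{q(a_1),q(a_1)}))$ lies in $(\K(M_L)\otimes_A 1_{i-1})\,(1_1\otimes 1_{i-1})\,(\text{same})$. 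Hence every $\iota^i(\K(M_L^{\otimes i}))$ lies in $J$, and since $\bigcup_{i\geq 1}\iota^i(\K(M_L^{\otimes i}))$ together with $\iota^0(A)=k_A(A)$ spans a dense subset of $C_\infty$ — equivalently, since the core is $\clsp\{k_M^{\otimes i}(m)k_M^{\otimes i}(n)^*:i\in\N\}$ and the $i=0$ term is $k_A(A)$ — we get that $J\supseteq \overline{\bigcup_{i\geq 1}\iota^i(\K(M_L^{\otimes i}))}$, an ideal whose only ``missing'' piece is $k_A(A)$ at level $0$. To finish, I would show this ideal is essential: if $x\in C_\infty$ annihilates it, then in particular $x$ annihilates every $\iota^i(\K(M_L^{\otimes i}))$ for $i\geq 1$; pushing $x$ down the direct limit and using that $k_A$ is injective (which holds by the almost-faithfulness hypothesis, via \cite[Proposition~2.1]{MS}) one finds $x=0$. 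The cleanest packaging is probably: since $\phi:A\to\L(M_L)$ restricted to $k_A(A)$ is implemented by the left action, and the almost-faithfulness of $L$ on $\phi^{-1}(\K(M_L))\supseteq\{0\}$ in fact gives injectivity where needed, essentiality follows from the faithfulness of the core on the full algebra.

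\smallskip

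Once (a) is in hand, (b) is immediate: Lemma~\ref{cornervsK} applied to the Exel system $(C_\infty,\beta,K)$ of Proposition~\ref{cornerendo} says that when $\overline{C_\infty\beta(C_\infty)C_\infty}$ is essential, $\phi$ (the left action of $C_\infty$ on $M_K$) is injective and $K_\beta=\overline{C_\infty\beta(C_\infty)C_\infty}=\phi^{-1}(\K(M_K))=J$. In particular $\O(K_\beta,M_K)=\O(\phi^{-1}(\K(M_K)),M_K)=\O(M_K)$. Then (c) follows by applying Theorem~\ref{extendExel} verbatim to the system $(C_\infty,\beta,K)$: its hypotheses (extendibility of $\beta$, injectivity, range equal to $\overline{\beta}(1)C_\infty\overline{\beta}(1)$) are exactly the conclusions about $\beta$ recorded in Theorem~\ref{CP=St}, so Theorem~\ref{extendExel} produces the isometry $V$ with $k_M(q(c))=k_{C_\infty}(c)V$, makes $(k_{C_\infty},V)$ Stacey-covariant for $(C_\infty,\beta)$, and gives the isomorphism $k_{C_\infty}\times V:C_\infty\times_\beta\N\to\O(K_\beta,M_K)=\O(M_K)=C_\infty\rtimes_{\beta,K}\N$.

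\smallskip

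The main obstacle is part (a): one must be careful about the distinction between $\overline{C_\infty\beta(C_\infty)C_\infty}$ and the larger ideal generated by all the higher compacts, and about whether the ``level-zero'' summand $k_A(A)$ causes a failure of essentiality. The almost-faithfulness hypothesis on $L$ is exactly what rules this out (it is what makes $k_A$ — and the core representation — faithful), so the proof of (a) will hinge on invoking \cite[Corollary~4.9]{FMR} or \cite[Proposition~2.1]{MS} at the right moment to conclude injectivity, rather than on any delicate module computation.
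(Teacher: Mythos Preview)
Your overall architecture is exactly the paper's: part (a) is the real work, and (b), (c) then drop out of Lemma~\ref{cornervsK} and Theorem~\ref{extendExel} respectively, using that $\beta$ is extendible, injective, and has range $\overline{\beta}(1)C_\infty\overline{\beta}(1)$ by Theorem~\ref{CP=St}. That part of your proposal is correct and needs no change.

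The proof of (a), however, has two genuine gaps.

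\emph{Showing $\iota^i(\K(M_L^{\otimes i}))\subset J$.} Your factorisation
\[
\Theta_{q(a_1)\otimes m,\,q(b_1)\otimes n}=(\Theta_{q(a_1),q(a_1)}\otimes 1_{i-1})\,\Theta_{q(a_1)\otimes m,\,q(b_1)\otimes n}\,(\Theta_{q(b_1),q(b_1)}\otimes 1_{i-1})
\]
does not hold, even ``up to rescaling'': the left flank acts on $q(a_1)\otimes m$ by sending it to $q(a_1)\otimes L(a_1^*a_1)\cdot m$, and $L(a_1^*a_1)$ is not a scalar. The paper avoids module gymnastics entirely by passing through $\kappa$ to the core $\O(M_L)^\gamma$. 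There $\kappa(\iota^i(\Theta_{m,n}))=k_M^{\otimes i}(m)k_M^{\otimes i}(n)^*$, and the calculation \eqref{simplifyinEcp} shows this equals $k_A(a)V^iV^{*i}k_A(b)^*$ for suitable $a,b$. Since $V^iV^{*i}=V^iV^{*i}\cdot VV^*$ for $i\geq 1$ and $VV^*=\overline{\kappa}(\overline{\beta}(1))$, the element lies in the ideal generated by $\overline{\beta}(1)$, i.e.\ in $J$. This is the step you are missing.

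\emph{Essentiality.} Your argument (``$x$ annihilates every $\iota^i(\K(M_L^{\otimes i}))$, push $x$ down the direct limit, use injectivity of $k_A$'') does not close: approximating $x$ by elements of $\iota^i(C_i)$ does not transfer the annihilation property to the approximants, and the appeal to ``faithfulness of the core on the full algebra'' is not a statement that implies what you need. The paper's argument is ideal-theoretic: take an ideal $I$ with $IJ=\{0\}$; since $C_\infty=\overline{\bigcup_i\iota^i(C_i)}$, one has $I=\overline{\bigcup_i I\cap\iota^i(C_i)}$ (cf.\ \cite[Lemma~1.3]{ALNR}), so it suffices to show $I\cap\iota^i(C_i)=\{0\}$. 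If $\iota^i(T)\in I$ with $T\in C_i\subset\L(M_L^{\otimes i})$, then $\iota^i(T)\iota^i(S)=0$ for all $S\in\K(M_L^{\otimes i})$ because $\iota^i(\K(M_L^{\otimes i}))\subset J$. The key fact you never invoke is that $\K(M_L^{\otimes i})$ is an \emph{essential} ideal of $\L(M_L^{\otimes i})$ (since $\L=M(\K)$), which forces $T=0$. The almost-faithfulness hypothesis enters only to make $\kappa$ an isomorphism (via \cite[Proposition~2.1]{MS} and \cite[Corollary~4.9]{FMR}); it is not the mechanism behind essentiality itself.
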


\begin{proof} Once we have established (\ref{CP=CPa}), (\ref{CP=CPb}) will follow from Lemma~\ref{cornervsK}, and then (\ref{CP=CPc}) will follow from Theorem~\ref{extendExel}. 

To prove (\ref{CP=CPa}), we first need to get our hands on some elements of the ideal $J$.
Recall that the system $(C_\infty,\beta)$ is pulled back from $(\O(M_L)^\gamma,\alpha')$ along the isomorphism $\kappa$ described in \eqref{defkappa}. The elements $V^iV^{*i}=V^iV^{*i}VV^*$ with $i\geq 1$ belong to the ideal in $\O(M_L)^\gamma$ generated by $VV^*=\overline{\alpha'}(1)=\overline{\kappa}(\overline{\beta}(1))$, and hence so do all elements of the form
\begin{align*}
k_M^{\otimes i}\big((q(a_1)&\otimes_A\cdots\otimes_A q(a_i)\big)k_M^{\otimes i}\big(q(b_1)\otimes_A\cdots\otimes_A q(b_i)\big)^*\\&=k_A\big(a_1\alpha(a_2)\alpha^2(a_3)\cdots \alpha^{i-1}(a_i)\big)V^iV^{*i}k_A\big(b_1\alpha(b_2)\alpha^2(b_3)\cdots \alpha^{i-1}(b_i)\big)^*
\end{align*}
(see the calculation in \eqref{simplifyinEcp}). For $m=q(a_1)\otimes_A\cdots\otimes_A q(a_i)$ and $n=q(b_1)\otimes_A\cdots\otimes_A q(b_i)$ we have $k_M^{\otimes i}(m)k_M^{\otimes i}(n)^*=\kappa(\iota^i(\Theta_{m,n}))$. So $\iota^i(\Theta_{m,n})$ belongs to the ideal in $C_\infty$ generated by $\overline{\beta}(1)$, which is precisely $J$. Thus $\iota^i(\K(M_L^{\otimes i}))\subset J$ for every $i\geq 1$.

An ideal $J$ is essential if it has nonzero intersection with every nonzero ideal. We suppose that $I$ is an ideal in $C_\infty$ such that $IJ=I\cap J=\{0\}$, and aim to show that $I=\{0\}$. Since $C_\infty=\overline{\bigcup_{i=1}^\infty\iota^i(C_i)}$, we know from \cite[Lemma~1.3]{ALNR}, for example, that $I=\overline{\bigcup_{i=1}^\infty\iota^i(C_i)\cap I}$. So it suffices to prove that $\iota^i(C_i)\cap I=\{0\}$ for every $i\geq 1$. Suppose that $T\in \K(M_L^{\otimes i})$ for some $i\geq 1$ and that $\iota^i(T)$ belongs to $I$. Then the assumption $IJ=\{0\}$ and the inclusion in the previous paragraph imply that 
\begin{equation}\label{proveJess}
\iota^i(TS)=\iota^i(T)\iota^i(S)=0\ \text{ for every $S\in \K(M_L^{\otimes i})$.}
\end{equation}
Since $\L(M_L^{\otimes i})$ is the multiplier algebra of $\K(M_L^{\otimes i})$ (see \cite[Corollary~2.54]{tfb}, for example), $\K(M_L^{\otimes i})$ is essential in $\L(M_L^{\otimes i})$, and \eqref{proveJess} implies that $T=0$. Thus $\iota^i(C_i)\cap I=\{0\}$, as required, and $J$ is essential in $C_\infty$. As indicated at the beginning, this completes the proof of the proposition.
\end{proof}

\begin{remark}\label{rmkSt=CP}
When we combine Proposition~\ref{CP=CP} with Theorem~\ref{CP=St}, we obtain an isomorphism of $\O(M_L)$ onto $\O(M_K)=C_\infty\rtimes_{\beta,K}\N$.  So even though $\O(M_L)$ need not be the Exel crossed product of the orginal system $(A,\alpha,L)$, it can still be realised as an Exel crossed product. 
\end{remark}


\section{The example of Ionescu and Muhly}\label{sec-IM}

We now discuss the example of  \cite[Theorems~3.3 and 3.4]{IM} and \cite[Proposition~3.3]{D} which motivated this paper. Let $\sigma$ be a surjective local homeomorphism of a compact Hausdorff space $X$.  The \emph{Deaconu-Renault} groupoid (named after its use in \cite{D2} and a special case in  \cite[page~138]{ren})  is 
\[G:=\{(x,n,y)\in X\times\Z\times X:\sigma^k(x)=\sigma^l(y)\text{\ for some $k,l\in \N$ with $n=k-l$}\}\]
with unit space $G^{(0)}=X$, source and range maps $s((x,n,y))=y$, $r((x,n,y))=x$, product $(x,n,z)(z,k,y)=(x, n+k, y)$, and inverse $(x,n,y)^{-1}=(y,-n,x)$. Suppose that $U$, $V$ are open subsets of $X$, and $k,l$ are natural numbers such that $\sigma^k|_U$ and $\sigma^l|_V$ are homeomorphisms with $\sigma^k(U)=\sigma^l(V)$, and set
\[
Z(U,V, k,l)=\{(x,k-l, y)\in  G: x\in U, y\in V\}.
\]
It is shown in \cite[\S3]{ER} that the sets $Z(U,V, k,l)$ are a basis for a locally compact Hausdorff topology on $G$, and that $G$ is then an $r$-discrete groupoid for which the counting measures form a Haar system.  Since  $X=G^{(0)}$ is open in $G$, $C(X)=C(G^{(0)})$ embeds isometrically in  $C^*(G)$. 

As in \cite{IM}, the function  $S\in C_c(G)$ defined  by 
\begin{equation}\label{eqS} 
S(x,k, y)=\begin{cases} \frac{1}{\sqrt{|\sigma^{-1}(\sigma(x))|}} &\text{if $k=1$ and $y=\sigma(x)$}\\0&\text{otherwise,}\end{cases}
\end{equation} is an isometry in $C^*(G)$.  The discussion on page~110 of \cite{ren} show that there is an action $\gamma':\T\to \Aut C^*(G)$ such that $\gamma'_z(f)(x,k,y)=z^kf(x,k,y)$. The following is a restatement of \cite[Theorem~3.3]{IM}.

\begin{thm}\label{thmIM1}\textup{(Deaconu, Ionescu-Muhly)} \label{IMthm1}
Let $\alpha$ be the endomorphism  $f\mapsto f\circ\sigma$ of $C(X)$,
and let $L$ be the transfer operator for $(C(X),\alpha)$ defined by \[L(f)(x)=\frac{1}{|\sigma^{-1}(x)|}\sum_{\sigma(y)=x}f(y).\] 
Let $\iota$ be the identification of $C(X)$ with the isometric embedding of $C(G^{(0)})$ in $C^*(G)$ and  $S$ the isometry of \eqref{eqS}, and  define $\psi:M_L\to C^*(G)$  by $\psi(f)=\iota(f)S$.  Then $(\psi, \iota)$ is   a  Cuntz-Pimsner covariant representation of $M_L$, and $\psi\times\iota$ is a $\gamma$--$\gamma'$ equivariant isomorphism of $\O(M_L)$  onto $C^*(G)$.
\end{thm}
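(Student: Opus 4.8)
The plan is to verify the three defining relations of a Cuntz-Pimsner covariant representation for the pair $(\psi,\iota)$, and then to invoke the gauge-invariant uniqueness theorem for $\O(M_L)$ together with the equivariance of $\gamma'$ to conclude that $\psi\times\iota$ is an isomorphism onto $C^*(G)$. First I would check that $\iota:C(X)\to C^*(G)$ is a nondegenerate homomorphism: this follows because $C(X)=C(G^{(0)})$ sits inside $C^*(G)$ as a subalgebra which acts as an approximate identity on $C_c(G)$, so $\iota(C(X))C^*(G)$ is dense. Next I would compute $S^*\iota(f)S$ directly as a convolution in $C_c(G)$ and show it equals $\iota(L(f))$; here the factor $1/\sqrt{|\sigma^{-1}(\sigma(x))|}$ in the definition of $S$ is exactly what produces the averaging constant $1/|\sigma^{-1}(x)|$ in $L$, so this is a bookkeeping calculation with counting measures. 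Then the identity $S\iota(f)=\iota(\alpha(f))S=\iota(f\circ\sigma)S$ should likewise come out of a short convolution computation, using that $S$ is supported on $\{(x,1,\sigma(x))\}$. These three identities say precisely that $(\psi,\iota)$ is a representation of $M_L$ with $\psi(f)=\iota(f)S$, by the characterisation of such representations recalled before Theorem~\ref{relCPalg=St} (equivalently, by \cite[Proposition~3.1]{BRV}, $(\psi_s,i)\mapsto(\psi,\iota)$).

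The substantive step is Cuntz-Pimsner covariance, i.e. $\iota|_{J}=(\psi,\iota)^{(1)}\circ\phi|_{J}$ on $J=\phi^{-1}(\K(M_L))$. Since $\sigma$ is a surjective local homeomorphism, $\alpha$ is injective, $L$ is faithful, and $\phi:C(X)\to\L(M_L)$ is injective with image landing in $\K(M_L)$ — indeed for a local homeomorphism $M_L$ is finitely generated projective and $\phi^{-1}(\K(M_L))=C(X)$, so $J$ is all of $C(X)$. Thus what must be shown is that for every $f\in C(X)$, $\iota(f)=(\psi,\iota)^{(1)}(\phi(f))$. I would do this locally: choose a finite open cover $\{U_1,\dots,U_m\}$ of $X$ on which $\sigma$ is injective, a partition of unity $\{h_j^2\}$ subordinate to $\sigma(U_j)$, and set $g_j:=h_j\circ\sigma|_{U_j}$ extended by zero; then $q(g_j)$ are generators of $M_L$ with $\sum_j\Theta_{q(g_j),q(g_j)}=\phi(1)$, and for $f\in C(X)$ one checks $\sum_j\Theta_{f\cdot q(g_j),q(g_j)}=\phi(f)$. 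Applying $(\psi,\iota)^{(1)}$ gives $\sum_j\psi(f\cdot g_j)\psi(g_j)^* = \sum_j\iota(f)\iota(g_j)S S^*\iota(g_j)$, and a computation with the convolution product in $C_c(G)$ — using that $SS^*$ is the function supported on $\{(x,0,y):\sigma(x)=\sigma(y)\}$ with value $|\sigma^{-1}(\sigma(x))|^{-1}$ — collapses the sum to $\iota(f)$. This convolution identity, showing that the "Cuntz sum" $\sum_m(u_m v)(u_m v)^*$-type relation holds inside $C^*(G)$, is the main obstacle; it is essentially the observation, already present in \cite{IM} and \cite{D}, that the groupoid $C^*$-algebra encodes exactly the Cuntz-Pimsner relation.

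Having established that $(\psi,\iota)$ is Cuntz-Pimsner covariant, the universal property gives a homomorphism $\psi\times\iota:\O(M_L)\to C^*(G)$ with $(\psi\times\iota)\circ k_{C(X)}=\iota$ and $(\psi\times\iota)\circ k_{M_L}=\psi$. For surjectivity I would note that the range is a $C^*$-subalgebra containing $\iota(C(X))$ and $\iota(C(X))S$, hence containing $S$, $S^*$, and the isometries $\iota(g)S$; since the sets $Z(U,V,k,l)$ generate the topology of $G$, these elements generate $C^*(G)$ — concretely, any $Z(U,V,1,0)$-type generator is obtained from $\iota(C(X))$ and $S$, and general $Z(U,V,k,l)$ from products and adjoints. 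For injectivity I would verify that $\psi\times\iota$ is equivariant for the gauge action $\gamma$ on $\O(M_L)$ and $\gamma'$ on $C^*(G)$: on generators, $\gamma'_z(S)(x,1,y)=zS(x,1,y)$ so $\gamma'_z(S)=zS=\gamma'_z$-image of $k_{M_L}(q(1))$, while $\gamma'_z$ fixes $C(X)=C(G^{(0)})$, matching $\gamma_z$ on $k_{M_L}$ and $k_{C(X)}$; since these generate, equivariance holds. As $\iota$ is faithful (isometric embedding of $C(X)$ in $C^*(G)$) and $\gamma'$ is a strongly continuous action fixing $\iota(C(X))$ and scaling $S$, the gauge-invariant uniqueness theorem for Cuntz-Pimsner algebras (the $J=\phi^{-1}(\K(M_L))$ case of the dual-invariant uniqueness principle, cf.\ Proposition~\ref{diut} and \cite{FMR}) applies and shows $\psi\times\iota$ is faithful. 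Combined with surjectivity, $\psi\times\iota$ is a $\gamma$--$\gamma'$ equivariant isomorphism, as claimed.
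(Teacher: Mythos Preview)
The paper does not give its own proof of this theorem: it is explicitly presented as a restatement of \cite[Theorem~3.3]{IM} (and \cite[Proposition~3.3]{D}), with no argument supplied. So there is no in-paper proof to compare against. That said, your outline is the standard route to such a result and is essentially correct; the partition-of-unity argument you sketch for Cuntz--Pimsner covariance is precisely the one the paper later invokes in the proof of Corollary~\ref{cor-IM} (citing \cite[Proposition~8.2]{EV}), and your appeal to gauge-invariant uniqueness is the right way to get injectivity.

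One technical slip worth fixing: your frame elements $g_j=h_j\circ\sigma|_{U_j}$ do \emph{not} give $\sum_j\Theta_{q(g_j),q(g_j)}=\phi(1)$. A direct computation shows that $\sum_j\Theta_{q(g_j),q(g_j)}$ applied to $q(f)$ yields $q\bigl(x\mapsto f(x)/|\sigma^{-1}(\sigma(x))|\bigr)$, which is off by the weight $|\sigma^{-1}(\sigma(x))|^{-1}$. The correct normalisation (as in the paper's proof of Corollary~\ref{cor-IM}) is to take a partition of unity $\{g_j\}$ subordinate to $\{U_j\}$ and set $\eta_j(x)=\sqrt{|\sigma^{-1}(\sigma(x))|\,g_j(x)}$; then $\sum_j\Theta_{q(\eta_j),q(\eta_j)}=\phi(1)$ on the nose, and your subsequent convolution calculation with $SS^*$ goes through. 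Also, for injectivity you should cite the gauge-invariant uniqueness theorem for relative Cuntz--Pimsner algebras (e.g.\ \cite[Theorem~4.1]{FMR} or Katsura's version) rather than Proposition~\ref{diut}, which as stated is for Stacey crossed products.
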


For $n,m\in\N$, set $R_{nn}:=\{(x,0,y)\in G: \sigma^n(x)=\sigma^n(y)\}$ and 
$R_\infty :=\bigcup_{n\in\N}R_{n}$, which is an open and closed subgroupoid of $G$. Define $c:G\to\Z$ by $c(x,n,y)=n$, and note that $c$ is a continuous homomorphism such that $R_\infty=c^{-1}(0)$. We can therefore deduce from \cite[Theorem~6.2]{KQR}, for example, that the inclusion map induces an isometric embedding of $C^*(R_\infty)$ into $C^*(G)$.  
We can now restate \cite[Theorem~3.4]{IM} as follows:

\begin{thm}\textup{(Ionescu-Muhly)}\label{IMthm2}  The inclusion $i:C_c(R_\infty)\to C_c(G)\subset C^*(G)$ extends to an isomorphism $i$ of $C^*(R_\infty)$ onto the fixed-point algebra $C^*(G)^{\gamma'}$.  Let $S$ be the isometry in \eqref{eqS}. Then $(C^*(R_\infty),\Ad S,\Ad S^*)$ is an Exel system.   Define $\rho: C_c(R_\infty)\to C^*(G)$ by $\rho(h)=i(h)S$. Then $(\rho,i)$ extends to a Cuntz-Pimsner covariant representation of $M_{\Ad S^*}$, and $\rho\times i$ is an isomorphism of $\O(M_{\Ad S^*})$ onto $C^*(G)$.
\end{thm}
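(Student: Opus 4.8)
The plan is to bring $C^*(G)$ into the Cuntz-Pimsner picture via Theorem~\ref{thmIM1} and then read off the three assertions from the machinery of Sections~\ref{mainthm}--\ref{sec:St=Ex}. Write $A=C(X)$; since $\alpha$ is unital, $K_\alpha=\phi^{-1}(\K(M_L))$, so the Exel crossed product $A\rtimes_{\alpha,L}\N$ is the Cuntz-Pimsner algebra $\O(M_L)$, and since the averaging operator $L$ is faithful, $\phi$ is injective and $k_A$ is faithful. Let $(k_M,k_A)$ be the universal Cuntz-Pimsner covariant representation of $M_L$ and $V\in M(\O(M_L))$ the isometry of Theorem~\ref{relCPalg=St} taken with $J=\phi^{-1}(\K(M_L))$ (so $\O(J,M_L)=\O(M_L)$, $C_J=\O(M_L)^\gamma$ and $k_M(q(a))=k_A(a)V$). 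The isomorphism $\psi\times\iota$ of Theorem~\ref{thmIM1} satisfies $(\psi\times\iota)(k_M(q(1)))=\iota(1)S=S$, so it carries $V$ to $S$, and being $\gamma$--$\gamma'$ equivariant it carries $\O(M_L)^\gamma$ onto the fixed-point algebra $C^*(G)^{\gamma'}$.

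For assertion~(a): since $\gamma'_z(f)(x,k,y)=z^kf(x,k,y)$, the averaging projection $f\mapsto\int_\T\gamma'_z(f)\,dz$ restricts $C_c(G)$-functions to $c^{-1}(0)=R_\infty$, so $C^*(G)^{\gamma'}$ is the closure of $C_c(R_\infty)$ in $C^*(G)$, which by \cite[Theorem~6.2]{KQR} is precisely the range of the isometric embedding $i$. Hence $i$ is an isomorphism of $C^*(R_\infty)$ onto $C^*(G)^{\gamma'}$, and combining with the first paragraph, $i^{-1}\circ(\psi\times\iota)$ identifies $\O(M_L)^\gamma$ with $C^*(R_\infty)$ and carries $\alpha':=\Ad V|_{\O(M_L)^\gamma}$ to the restriction of $\Ad S$ to $C^*(R_\infty)$ (which lands in $C^*(R_\infty)$ because $\gamma'_z(SbS^*)=SbS^*$). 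For assertion~(b): Theorem~\ref{relCPalg=St} says $\alpha'$ is extendible and injective with $\overline{\alpha'}(1)=VV^*$ and range $VV^*\O(M_L)^\gamma VV^*$, so after transport $\Ad S$ is an extendible injective endomorphism of $C^*(R_\infty)$ with $\overline{\Ad S}(1)=SS^*$ and range $SS^*\,C^*(R_\infty)\,SS^*$. Proposition~\ref{cornerendo} then produces the Exel system $(C^*(R_\infty),\Ad S,K')$ with $K'(b)=(\Ad S)^{-1}(SS^*bSS^*)$; since $\Ad S(S^*bS)=SS^*bSS^*$ and $S^*bS\in C^*(R_\infty)$ (again by $\gamma'$-invariance), $K'=\Ad S^*|_{C^*(R_\infty)}$, and $(C^*(R_\infty),\Ad S,\Ad S^*)$ is an Exel system.

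For assertion~(c), apply Theorem~\ref{extendExel} to $(C^*(R_\infty),\Ad S,\Ad S^*)$: it gives an isometry $W\in M(\O(K_{\Ad S},M_{\Ad S^*}))$ with $k_{M_{\Ad S^*}}(q(b))=k_{C^*(R_\infty)}(b)W$ and an isomorphism $k_{C^*(R_\infty)}\times W$ of $C^*(R_\infty)\times_{\Ad S}\N$ onto $\O(K_{\Ad S},M_{\Ad S^*})$. Theorem~\ref{relCPalg=St} also gives the isomorphism $\id\times V$ of $\O(M_L)^\gamma\times_{\alpha'}\N$ onto $\O(M_L)$; transporting its Stacey-covariant pair $(\id,V)$ along the identifications of the first two paragraphs yields the pair $(i,S)$, so $\id\times V$ becomes an isomorphism $i\times S$ of $C^*(R_\infty)\times_{\Ad S}\N$ onto $C^*(G)$. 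Composing, $\Phi:=(i\times S)\circ(k_{C^*(R_\infty)}\times W)^{-1}$ is an isomorphism of $\O(K_{\Ad S},M_{\Ad S^*})$ onto $C^*(G)$ with $\Phi\circ k_{C^*(R_\infty)}=i$ and $\Phi(W)=S$, hence $\Phi(k_{M_{\Ad S^*}}(q(b)))=i(b)S=\rho(b)$. It remains to see $\O(K_{\Ad S},M_{\Ad S^*})=\O(M_{\Ad S^*})$: since $\O(M_L)^\gamma$ is the direct limit $C_\infty$ of \S\ref{cp-algs} and $\Ad S^*$ is the canonical corner transfer operator, the system $(C^*(R_\infty),\Ad S,\Ad S^*)$ is isomorphic to the system $(C_\infty,\beta,K)$ of Theorem~\ref{CP=St} and Proposition~\ref{cornerendo}, whence Proposition~\ref{CP=CP}(\ref{CP=CPb}) gives $K_{\Ad S}=\phi^{-1}(\K(M_{\Ad S^*}))$ and so $\O(K_{\Ad S},M_{\Ad S^*})$ is all of $\O(M_{\Ad S^*})$. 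Therefore $(k_{M_{\Ad S^*}},k_{C^*(R_\infty)})$ is Cuntz-Pimsner covariant, $(\rho,i)=\Phi\circ(k_{M_{\Ad S^*}},k_{C^*(R_\infty)})$ extends to a Cuntz-Pimsner covariant representation of $M_{\Ad S^*}$, and $\rho\times i=\Phi$ is an isomorphism of $\O(M_{\Ad S^*})$ onto $C^*(G)$.

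The step needing the most care is the bookkeeping in the last paragraph: tracing the canonical maps through the chain $\O(M_{\Ad S^*})=\O(K_{\Ad S},M_{\Ad S^*})\cong C^*(R_\infty)\times_{\Ad S}\N\cong\O(M_L)\cong C^*(G)$ to confirm that $i$ and $\rho$ are exactly the images of $k_{C^*(R_\infty)}$ and $k_{M_{\Ad S^*}}\circ q$, and pinning down the identification of $(C^*(R_\infty),\Ad S,\Ad S^*)$ with the direct-limit system $(C_\infty,\beta,K)$ needed so that Proposition~\ref{CP=CP} applies. All the substantive $C^*$-algebraic content has been established in Sections~\ref{mainthm}--\ref{sec:St=Ex}; this proof is an assembly of those results together with Theorem~\ref{thmIM1}.
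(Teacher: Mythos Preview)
The paper does not supply its own proof of Theorem~\ref{IMthm2}: it is stated as a restatement of \cite[Theorem~3.4]{IM} and is not proved here. What the paper does prove is Corollary~\ref{cor-IM}, which recovers the \emph{isomorphism} $\O(M_L)\cong\O(M_{\Ad S^*})$ from the paper's own machinery together with Theorem~\ref{thmIM1}. Your argument is essentially an expanded version of that corollary's proof: you transport $\alpha'$ along $\psi\times\iota$ and $i$, identify the resulting corner endomorphism and its canonical transfer operator with $(\Ad S,\Ad S^*)$ exactly as the paper does in the last paragraph of the proof of Corollary~\ref{cor-IM}, and you invoke Proposition~\ref{CP=CP}(\ref{CP=CPb}) to upgrade the relative Cuntz--Pimsner algebra to $\O(M_{\Ad S^*})$, again just as the paper does. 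So on the part where a comparison is possible, your route and the paper's route coincide.

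What you have added beyond Corollary~\ref{cor-IM} is a self-contained argument for the first assertion (that $i$ maps $C^*(R_\infty)$ isomorphically onto $C^*(G)^{\gamma'}$) and the explicit check that $(\rho,i)$ is the image of the universal Cuntz--Pimsner pair under your composite isomorphism. Both are correct: the fixed-point assertion follows from the conditional-expectation argument you give together with the isometric embedding already recorded in the paper via \cite[Theorem~6.2]{KQR}, and the bookkeeping in your final paragraph is the same bookkeeping the paper carries out implicitly in Corollary~\ref{cor-IM}. One small point: you use $A$ acting by compact operators on $M_L$ (to get $\O(K_\alpha,M_L)=\O(M_L)$ and to apply Proposition~\ref{CP=CP}); the paper supplies this via the partition-of-unity argument at the start of the proof of Corollary~\ref{cor-IM}, so you should cite that rather than leave it implicit.
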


Together Theorems~\ref{IMthm1} and \ref{IMthm2} give an isomorphism of $\O(M_L)$ onto $\O(M_{\Ad S^*})$. We can now recover this isomorphism from our results and Theorem~\ref{IMthm1}.

\begin{cor}\label{cor-IM}
Let $j$ be the inclusion of $C(X)=C(R_\infty^{(0)})$  in $C^*(R_\infty)$.  Then there is an isomorphism $\Upsilon$ of $C(X)\rtimes_{\alpha, L}\N=\O(M_L)$ onto $C^*(R_\infty)
\rtimes_{\Ad S, \Ad S^*}\N=\O(M_{\Ad S^*})$ such that $\Upsilon\circ k_{C(X)}=k_{C^*(R_\infty)}\circ j$ and $\Upsilon\circ k_{M_L}=k_{M_{\Ad S^*}}\circ j$.
\end{cor}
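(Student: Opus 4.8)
The plan is to recognise the Exel system $(C^*(R_\infty),\Ad S,\Ad S^*)$ of Theorem~\ref{IMthm2} as exactly the system that Corollary~\ref{Ex=St2} and Proposition~\ref{cornerendo} attach to the original system $(C(X),\alpha,L)$, and then to read off $\Upsilon$ from Corollary~\ref{St=Ex}. I would begin by recording two normalisations. First, since $C(X)$ and $\alpha$ are unital, $\overline{C(X)\alpha(C(X))C(X)}=C(X)$, so $K_\alpha=\phi^{-1}(\K(M_L))$ and hence $C(X)\rtimes_{\alpha,L}\N=\O(M_L)$. Second, since $L$ is the transfer operator of a surjective local homeomorphism that averages over inverse images, it is faithful, hence (as $C(X)$ is unital) almost faithful on $\phi^{-1}(\K(M_L))$, so the hypotheses of \S\ref{cp-algs} hold. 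By Theorem~\ref{IMthm1}, $\Psi:=\psi\times\iota$ is a $\gamma$--$\gamma'$ equivariant isomorphism of $\O(M_L)$ onto $C^*(G)$ with $\Psi\circ k_{C(X)}=\iota$ and $\Psi\circ k_{M_L}=\psi$; since $C(X)$ is unital, the isometry $V\in M(\O(M_L))$ of Theorem~\ref{relCPalg=St} equals $k_{M_L}(q(1))$, so $\Psi(V)=\psi(q(1))=\iota(1)S=S$. Restricting $\Psi$ to fixed-point algebras identifies the core $B:=\O(M_L)^\gamma$ with $C^*(G)^{\gamma'}$, which is $i(C^*(R_\infty))$ by the first assertion of Theorem~\ref{IMthm2} (this part, unlike the Cuntz--Pimsner statement, is a standard consequence of the $\Z$-grading of $C^*(G)$ by the cocycle $c$). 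Writing $\Phi\colon B\to C^*(R_\infty)$ for the resulting isomorphism, one has $\Phi\circ k_{C(X)}=j$, because $\iota|_{C(G^{(0)})}$ and $i|_{C(R_\infty^{(0)})}$ are the same identification of $C(X)$.

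Next I would transport the corner endomorphism and its transfer operator. Corollary~\ref{Ex=St2} provides the injective, extendible endomorphism $\beta=\Ad V|_B$ of $B$, with $\overline{\beta}(1)=VV^*$ and range $VV^*BVV^*$. Since $\gamma'_z(S)=zS$, conjugation by $S$ and by $S^*$ preserves $C^*(G)^{\gamma'}$, so via $\Phi$ the endomorphism $\beta$ becomes $\Ad S$ on $C^*(R_\infty)$; accordingly $\Ad S$ is injective and extendible with $\overline{\Ad S}(1)=SS^*$ and range $SS^*C^*(R_\infty)SS^*$, so it satisfies the hypotheses of Proposition~\ref{cornerendo}. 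The transfer operator $K\colon b\mapsto\beta^{-1}(\overline{\beta}(1)b\overline{\beta}(1))$ of Proposition~\ref{cornerendo} then transports, using $S^*S=1$, to $c\mapsto S^*(SS^*cSS^*)S=S^*cS=\Ad S^*(c)$. Hence $\Phi$ is an isomorphism of the Exel system $(B,\beta,K)$ onto $(C^*(R_\infty),\Ad S,\Ad S^*)$, and in particular the latter really is the Exel system of Proposition~\ref{cornerendo}.

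To finish I would identify $C^*(R_\infty)\rtimes_{\Ad S,\Ad S^*}\N$ with the full Cuntz--Pimsner algebra and then assemble $\Upsilon$. Since $L$ is almost faithful on $\phi^{-1}(\K(M_L))$, Proposition~\ref{CP=CP}(b) applies to $(C(X),\alpha,L)$; transporting its conclusion along $\Phi\circ\kappa$, which carries the Stacey system of Theorem~\ref{CP=St} onto $(C^*(R_\infty),\Ad S,\Ad S^*)$, gives $K_{\Ad S}=\phi^{-1}(\K(M_{\Ad S^*}))$, so $C^*(R_\infty)\rtimes_{\Ad S,\Ad S^*}\N=\O(M_{\Ad S^*})$. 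Now Corollary~\ref{St=Ex}, applied to $(C(X),\alpha,L)$ and transported by $\Phi$, produces an isomorphism $\Upsilon$ of $C(X)\rtimes_{\alpha,L}\N=\O(M_L)$ onto $C^*(R_\infty)\rtimes_{\Ad S,\Ad S^*}\N=\O(M_{\Ad S^*})$ with $\Upsilon\circ k_{C(X)}=k_{C^*(R_\infty)}\circ j$ (using $\Phi\circ k_{C(X)}=j$) and $\Upsilon(t)=W$, where $W\in M(\O(M_{\Ad S^*}))$ is the isometry of Theorem~\ref{relCPalg=St} for $(C^*(R_\infty),\Ad S,\Ad S^*)$, characterised by $k_{M_{\Ad S^*}}(q(c))=k_{C^*(R_\infty)}(c)W$. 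Applying $\Upsilon$ to $k_{M_L}(q(a))=k_{C(X)}(a)\,t$ gives $\Upsilon(k_{M_L}(q(a)))=k_{C^*(R_\infty)}(j(a))\,W=k_{M_{\Ad S^*}}(q(j(a)))$ for $a\in C(X)$; since $\Ad S^*\circ j=j\circ L$ (from the representation identity $\psi(q(a))^*\psi(q(b))=\iota(L(a^*b))$ of Theorem~\ref{IMthm1}), the assignment $q(a)\mapsto q(j(a))$ extends to an isometric map $j\colon M_L\to M_{\Ad S^*}$, and density plus continuity yield $\Upsilon\circ k_{M_L}=k_{M_{\Ad S^*}}\circ j$.

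The step I expect to be the main obstacle is the bookkeeping of these identifications --- verifying under $\Psi$ and $\kappa$ that the abstract core $\O(M_L)^\gamma$, the corner endomorphism $\beta$ of Corollary~\ref{Ex=St2} and the transfer operator of Proposition~\ref{cornerendo} correspond respectively to the concrete $C^*(R_\infty)$, $\Ad S$ and $\Ad S^*$ --- together with the point that $C^*(R_\infty)\rtimes_{\Ad S,\Ad S^*}\N$ is the \emph{full} Cuntz--Pimsner algebra $\O(M_{\Ad S^*})$ rather than a proper relative quotient, which is exactly where faithfulness of the averaging transfer operator $L$ is used.
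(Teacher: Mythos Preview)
Your proposal is correct and follows essentially the same route as the paper: transport the endomorphism $\alpha'$ of Theorem~\ref{relCPalg=St} across the equivariant isomorphism $\psi\times\iota$ of Theorem~\ref{IMthm1} to identify $(\O(M_L)^\gamma,\beta,K)$ with $(C^*(R_\infty),\Ad S,\Ad S^*)$, then invoke Corollary~\ref{St=Ex} for $\Upsilon$ and Proposition~\ref{CP=CP}(\ref{CP=CPb}) to see that the target crossed product is the full Cuntz--Pimsner algebra. The only noteworthy differences are cosmetic: the paper obtains $K_\alpha=\phi^{-1}(\K(M_L))$ by first showing via a partition-of-unity argument that $C(X)$ acts by compact operators, whereas you get it directly from unitality of $\alpha$ (both are fine, and yours is shorter); and you verify the intertwining relation $\Upsilon\circ k_{M_L}=k_{M_{\Ad S^*}}\circ j$ more explicitly than the paper, which simply asserts that $\Upsilon$ has ``the stated properties''.
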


\begin{proof} We first claim that $C(X)$ acts on the left of $M_L$ by compact operators. To see this, choose a finite open covering $\{U_i\}$ of $X$ such that $\sigma$ is a homeomorphism on each $U_i$, let $\{g_i\}$ be a partition of unity subordinate to $\{U_i\}$, and set $\eta_i(x)=\sqrt{|\sigma^{-1}(\sigma(x))|g_i(x)}$.  Then  \cite[Proposition~8.2]{EV} implies that
$\phi(f)=\sum_i\Theta_{f\cdot \eta_i,\eta_i}$ for $f\in C(X)$, and hence $\phi(f)$ is compact, as claimed. Since $\alpha$ is unital,  $K_\alpha=\phi^{-1}(\K(M_L))$, and $C(X)\rtimes_{\alpha, L}\N=\O(K_\alpha,M_L)$ coincides with  $\O(M_L)$. 

Since the isomorphism $\psi\times \iota$ of Theorem~\ref{IMthm1} is $\gamma$--$\gamma'$ equivariant we can pull the endomorphism $\alpha'$ of $\O(M_L)^\gamma$ from Theorem~\ref{relCPalg=St} over to an endomorphism $\tau$ of  $C^*(R_\infty)$ by setting $i\circ \tau:=(\psi\times \iota)\circ \alpha'\circ(\psi\times \iota)^{-1}$. Let $K$  be the transfer operator of $(C^*(R_\infty), \tau)$ defined by $K(b)=\tau^{-1}(\overline{\tau}(1)b\overline{\tau}(1))$. 
Then Corollary~\ref{St=Ex} says there is an isomorphism  $\Upsilon$ of $C(X)\rtimes_{\alpha, L}\N=\O(M_L)$ onto $C^*(R_\infty)\rtimes_{\tau, K}\N$ with the stated properties.  Since $L$ is faithful,  Proposition~\ref{CP=CP}(\ref{CP=CPb}) implies that $K_\tau=\phi^{-1}(\K(M_K))$, which implies that $C^*(R_\infty)\rtimes_{\tau, K}\N=\O(M_K)$.
\footnote{This contradicts the claim on page~201 of \cite{IM} that $\overline{C^*(R_\infty)\tau(C^*(R_\infty))C^*(R_\infty)}$ is a proper ideal.} 

Finally, we need to check that $(\tau, K)$ coincides with the Exel system of \cite[Theorem~3.4]{IM}. By the formula (3) at the top of page~197 of \cite{IM} we need to check that  $(\tau, K)=(\Ad S, \Ad S^*)$.  For this, recall from  Theorem~\ref{relCPalg=St} that $\alpha'$ is $(\Ad V)|$ where $V=k_{M_L}(1)$; since $\iota\rtimes \psi(V)=\psi(1)=\iota(1)S=S$ it follows that  $\tau$ is $\Ad S$. Now  $K(b)=\tau^{-1}(SS^*bSS^*)=S^*bS$, that is, $K$ is $\Ad S^*$.  Thus $M_K=M_{{\Ad} S^*}$ as required.
\end{proof}

Proposition~\ref{CP=CP} now implies that $C^*(G)$ is isomorphic to the Stacey crossed product $C^*(R_\infty)\times_{\Ad S}\N$, as also noticed by Anantharaman-Delaroche \cite[\S1.3.4]{AD} and Deaconu   \cite[page~1782]{D2}.


\section{K-theory}

Let $(A, \alpha, L)$ be an Exel system such that $L$ is almost faithful on $\phi^{-1}(\K(M_L))$.   Recall from Theorem~~\ref{CP=St} that the core $\O(M_L)^\gamma$ is isomorphic to a direct limit $C_\infty:=\varinjlim(C_i,\phi_i)$ and that there is an endomorphism $\beta$ of $C_\infty$ such that the Stacey crossed product $C_\infty\times_\beta\N$ is isomorphic to $\O(M_L)$.  In this section we establish a $6$-term cyclic exact sequence of K-groups of $C_\infty$ and $\O(M_L)$. For this we need to know that the range of $\beta$ is a full corner in $C_\infty$, and we show this with the following more general lemma.

\begin{lemma}\label{lem-full}
Suppose that $(A,\alpha,L)$ is an Exel system such that $L$ is almost faithful on $\phi^{-1}(\K(M_L))$. Let $(C_\infty,\beta)$ be the Stacey system constructed in Theorem~\ref{CP=St} and let $K:b\mapsto \beta^{-1}\big(\overline{\beta}(1)b\overline{\beta}(1)\big)$ be the transfer operator for $(C_\infty, \beta)$ from Proposition~\ref{cornerendo}.
Define\footnote{It's not clear whether there is much point in doing this lemma in this generality. In the application to Theorem~\ref{prop6term} we are assuming that $\phi\subset\K(M_L)$ and that $A$ ia unital. But under these hypotheses, $\K(M_L^{\otimes i})=\L(M_L^{\otimes i})$ is unital, and so is $C_\infty$.}
\[
I_i:=(\K(M_L)\otimes_A1_{i-1})+\cdots+\K(M_L^{\otimes i}).
\]
Then $I_i$ is an ideal in $C_i$, $\varinjlim I_i$ is naturally isomorphic to an ideal $I_\infty:=\overline{\bigcup_{i=1}^\infty\iota^i(I_i)}$ in $C_\infty$, and this ideal is precisely $J=\overline{C_\infty\beta(C_\infty)C_\infty}$.
Moreover, the following are equivalent:
\begin{enumerate}\setlength\itemindent{-16pt}
\item\label{lem-fullbi} $A$ acts by compact operators on $M_L$;
\item\label{lem-fullbii}  $C_\infty$ acts by compact operators on $M_K$;
\item\label{lem-fullbiii} $J=C_\infty$;
\item\label{lem-fullbiv}  the range of $\beta$ is a full corner in $C_\infty$.
\end{enumerate}
\end{lemma}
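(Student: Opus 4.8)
The plan is to establish the three structural claims about the ideals $I_i$ and $I_\infty$ first, and then prove the equivalences in a cycle. For the structural part: each $I_i$ is an ideal in $C_i$ because $\K(M_L^{\otimes j})\otimes_A 1_{i-j}$ is an ideal in $\L(M_L^{\otimes j})\otimes_A 1_{i-j}$ (and hence absorbs products with elements of $C_i\subseteq\L(M_L^{\otimes i})$), and sums of such ideals are ideals; the key point is that $\phi_i(T)=T\otimes_A 1_i$ carries $I_i$ into $I_{i+1}$, so the $I_i$ form a subsystem of $(C_i,\phi_i)$ and $\varinjlim I_i$ sits inside $\varinjlim C_i=C_\infty$ as the closed union $I_\infty:=\overline{\bigcup_i\iota^i(I_i)}$, which is an ideal since each $\iota^i(I_i)$ is an ideal in $\iota^i(C_i)$ and the union is increasing. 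To identify $I_\infty$ with $J=\overline{C_\infty\beta(C_\infty)C_\infty}$ I would argue both inclusions. For $I_\infty\subseteq J$: the computation already carried out in the proof of Proposition~\ref{CP=CP}(\ref{CP=CPa}) shows that $\iota^i(\K(M_L^{\otimes i}))\subseteq J$ for $i\geq 1$, and more generally $\iota^i(\K(M_L^{\otimes j})\otimes_A 1_{i-j})=\iota^j(\K(M_L^{\otimes j}))\subseteq J$ for $1\leq j\leq i$; adding over $j$ gives $\iota^i(I_i)\subseteq J$, hence $I_\infty\subseteq J$. For $J\subseteq I_\infty$: it suffices to see $\overline{\beta}(1)\in I_\infty$, since $J$ is the ideal generated by $\overline{\beta}(1)$ and $I_\infty$ is an ideal; but $\overline{\beta}(1)=\overline{\kappa}^{-1}(VV^*)$ and $VV^*=(k_M,k_A)^{(1)}(1_1)$, which under $\kappa$ corresponds to $\iota^1(1_1\otimes_A\cdots)$ — more precisely, via \eqref{defkappa}, $VV^*$ is the image of the identity operator $1_1\in\K(M_L)$, so $\overline{\beta}(1)=\iota^1(1_1)\in\iota^1(\K(M_L))\subseteq I_\infty$. (One must be slightly careful that $1_1\in\K(M_L)$, i.e.\ that $M_L$ is algebraically finitely generated projective, is \emph{not} being assumed; rather, $VV^*$ lies in the core and pulls back into $\iota^1(\K(M_L))$ because $VV^*=k_M(q(e_\lambda))k_M(q(e_\lambda))^*\to$ a projection that is the strict limit of elements $(k_M,k_A)^{(1)}(\Theta_{q(e_\lambda),q(e_\lambda)})$, each lying in $\kappa(\iota^1(\K(M_L)))$, and $I_\infty$ being norm-closed and containing these — I would phrase it via the approximate-identity computation for $V$ from the proof of Theorem~\ref{relCPalg=St}.)

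For the equivalences I would run the cycle $(\ref{lem-fullbi})\Rightarrow(\ref{lem-fullbiii})\Rightarrow(\ref{lem-fullbiv})\Rightarrow(\ref{lem-fullbii})\Rightarrow(\ref{lem-fullbiii})$, plus $(\ref{lem-fullbiii})\Rightarrow(\ref{lem-fullbi})$, or some economical variant. The implication $(\ref{lem-fullbi})\Rightarrow(\ref{lem-fullbiii})$: if $A$ acts by compacts on $M_L$, then $\phi(A)\subseteq\K(M_L)$, so $C_i=\K(M_L^{\otimes 1})\otimes_A 1_{i-1}+\cdots$ already has $A\otimes_A 1_i\subseteq(\K(M_L)\otimes_A 1_{i-1})\otimes_A 1_1+\cdots$ wait — more directly, $A\otimes_A 1_i=\phi(A)\otimes_A 1_{i-1}\subseteq\K(M_L)\otimes_A 1_{i-1}\subseteq I_i$, so $C_i=I_i$ for all $i\geq 1$, hence $C_\infty=I_\infty=J$. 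Conversely $(\ref{lem-fullbiii})\Rightarrow(\ref{lem-fullbi})$: if $J=C_\infty$ then $I_1=\iota^1$-preimage considerations give $\iota^1(C_1)\subseteq I_\infty$, so $C_1\subseteq I_1$, i.e.\ $A\otimes_A 1_1=\phi(A)\subseteq\K(M_L)=I_1$ — that is, $A$ acts by compacts. The equivalence $(\ref{lem-fullbiii})\Leftrightarrow(\ref{lem-fullbiv})$: by Proposition~\ref{CP=CP}(\ref{CP=CPa}) $J$ is essential; $\beta$ has range $\overline{\beta}(1)C_\infty\overline{\beta}(1)$ by Theorem~\ref{CP=St}, and this corner is full exactly when the ideal it generates, namely $J=\overline{C_\infty\beta(C_\infty)C_\infty}=\overline{C_\infty\overline{\beta}(1)C_\infty}$ (the two are equal by the approximate-identity argument already used in Proposition~\ref{cornerendo}), is all of $C_\infty$. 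Finally $(\ref{lem-fullbiii})\Leftrightarrow(\ref{lem-fullbii})$: apply the already-proved equivalence $(\ref{lem-fullbi})\Leftrightarrow(\ref{lem-fullbiii})$ to the Exel system $(C_\infty,\beta,K)$ in place of $(A,\alpha,L)$; this is legitimate because Proposition~\ref{CP=CP}(\ref{CP=CPb}) tells us $(C_\infty,\beta,K)$ satisfies the standing hypothesis ($K$ almost faithful on $\phi^{-1}(\K(M_K))$, since $K_\beta=\phi^{-1}(\K(M_K))$ and $K$ is almost faithful on $K_\beta$ by Proposition~\ref{cornerendo}), so the lemma applies to it and gives: $C_\infty$ acts by compacts on $M_K$ $\iff$ $\overline{C_\infty\beta(C_\infty)C_\infty}=C_\infty$ $\iff$ $J=C_\infty$.

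The main obstacle I anticipate is the bookkeeping in identifying $I_\infty$ with $J$ — specifically, being careful about which operators genuinely lie in $\K(M_L^{\otimes i})$ versus $\L(M_L^{\otimes i})$, since without assuming $A$ unital or $\phi(A)\subseteq\K(M_L)$ the algebra $C_i$ properly contains $I_i$ and $\overline{\beta}(1)$ need not be a "finite-rank-looking" element a priori. The cleanest route is to note that $\overline{\beta}(1)=VV^*$ where $V$ is a strict limit of $k_M(q(e_\lambda))$, so $VV^*$ is a strict — hence, after pulling into the core which is a genuine $C^*$-subalgebra, norm — limit of $(k_M,k_A)^{(1)}(\Theta_{q(e_\lambda),q(e_\lambda)})=\kappa(\iota^1(\Theta_{q(e_\lambda),q(e_\lambda)}))$, placing $\overline{\beta}(1)\in\overline{\kappa(\iota^1(\K(M_L)))}=\kappa(I_\infty)\cap(\text{stuff from level }1)$; I would spell this out as the one delicate step. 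A secondary subtlety is confirming that $\overline{C_\infty\beta(C_\infty)C_\infty}=\overline{C_\infty\overline{\beta}(1)C_\infty}$ so that "full corner" and "$J=C_\infty$" are literally the same condition — but this is exactly the approximate-identity argument concluding the proof of Proposition~\ref{cornerendo}, applied with $(B,\beta)=(C_\infty,\beta)$, so it can be quoted rather than repeated.
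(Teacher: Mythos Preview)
Your overall architecture is right, and several pieces (the ideal structure of $I_i$, the inclusion $I_\infty\subseteq J$, and the equivalence $(\ref{lem-fullbiii})\Leftrightarrow(\ref{lem-fullbiv})$) match the paper. But there are three places where the argument, as written, does not go through.

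\textbf{The inclusion $J\subseteq I_\infty$.} Your plan is to show $\overline{\beta}(1)\in I_\infty$, but $\overline{\beta}(1)=VV^*$ is only a multiplier of $C_\infty$, and your proposed fix --- that the strict limit of the $\Theta_{q(e_\lambda),q(e_\lambda)}$ becomes a norm limit ``after pulling into the core'' --- is not valid: norm-closed subspaces need not be strictly closed. The paper bypasses this entirely by observing directly that $\beta(C_\infty)\subseteq I_\infty$: from \eqref{defbeta2}, $\beta(\iota^j(S))=\iota^{j+1}(U_jSU_j^*)$, and $U_jSU_j^*\in\K(M_L^{\otimes(j+1)})$ whenever $S\in\K(M_L^{\otimes j})$ (conjugating a compact by an adjointable map gives a compact; for $j=0$ recall $\K(A_A)=A$). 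Since $I_\infty$ is an ideal, $J=\overline{C_\infty\beta(C_\infty)C_\infty}\subseteq I_\infty$ follows.

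\textbf{The implication $(\ref{lem-fullbiii})\Rightarrow(\ref{lem-fullbi})$.} You assert that $\iota^1(C_1)\subseteq I_\infty$ forces $C_1\subseteq I_1$, but this is exactly the nontrivial point: an element of $\iota^0(A)$ lying in $I_\infty=\overline{\bigcup\iota^i(I_i)}$ could in principle be approximated by elements of $\iota^i(I_i)$ for large $i$ without itself coming from $I_1$. The paper handles this by proving that $\iota^0$ induces an \emph{isomorphism} $A/\phi_A^{-1}(\K(M_L))\cong C_\infty/I_\infty$; the key ingredient is \cite[Lemma~4.5]{FMR}, which says $T\otimes_A1_i\in\K(M_L^{\otimes(i+1)})$ implies $T\in\K(M_L)$, giving $(\iota^0)^{-1}(I_\infty)=\phi_A^{-1}(\K(M_L))$. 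Without this you cannot pull the compactness condition back down to level one.

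\textbf{The equivalence $(\ref{lem-fullbii})\Leftrightarrow(\ref{lem-fullbiii})$.} Applying the present lemma recursively to $(C_\infty,\beta,K)$ does not give what you claim: the ``$(\ref{lem-fullbiii})$'' for the new system refers to a \emph{new} direct-limit algebra $\widetilde{C}_\infty$ built from $M_K$, not to the original $C_\infty$, so you cannot conclude $\overline{C_\infty\beta(C_\infty)C_\infty}=C_\infty$. The paper's route is much shorter and you already have the ingredient in hand: Proposition~\ref{CP=CP}(\ref{CP=CPb}) gives $J=\phi^{-1}(\K(M_K))$, so $C_\infty$ acts by compacts on $M_K$ if and only if $\phi^{-1}(\K(M_K))=C_\infty$ if and only if $J=C_\infty$.
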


\begin{proof} Recall that $(C_\infty, \iota^i)=\varinjlim(C_i,\phi_i)$ where  $\phi_i:C_i\to C_{i+1}$ is $\phi_i(T)=T\otimes_A 1_i$.

An induction argument shows that each $I_i$ is the sum of a $C^*$-subalgebra $I_{i-1}\otimes_A1$ and the ideal $\K(M_L^{\otimes i})$ of $\L(M_L^{\otimes i})$, hence is a $C^*$-algebra. For fixed $i\geq 1$, and for $0\leq j\leq i$ and $1\leq l\leq i$,  the product
\[
\big(\K(M_L^{\otimes j})\otimes 1_{i-j}\big)\big(\K(M_L^{\otimes l})\otimes 1_{i-l}\big)
\]
of typical summands of $C_i$ and $I_i$ is back in $I_i$, whence
 $I_i$ is an ideal in $C_i$. Thus $I_\infty:=\overline{\bigcup_{i=1}^\infty\iota^i(I_i)}$ is an ideal in $C_\infty=\overline{\bigcup_{i=1}^\infty\iota^i(C_i)}$. The maps $\iota^i$ induce an isomorphism of $\varinjlim I_i$ onto $I_\infty$. 
 
 Next observe that, since $\beta$ maps $\iota^i(\K(M_L^{\otimes i}))$ into 
$\iota^{i+1}(\K(M_L^{\otimes (i+1)}))$ (see \eqref{defbeta2}), the range of $\beta$ is contained in $I_\infty$, and hence $J\subset I_\infty$. But we showed in the proof above that $\iota^i(\K(M_L^{\otimes i}))$ is contained in $J$ for every $i\geq 1$, and hence so is
\[
\iota^i(I_i)=\iota^i\big((\K(M_L)\otimes_A1_{i-1}+\cdots+\K(M_L^{\otimes i})\big)=\iota^1(\K(M_L))+\cdots+\iota^i(K(M_L^{\otimes i})).
\]
Thus $I_\infty\subset J$, and we have proved $I_\infty=J$.  

Since $\phi_0:C_0\to C_1$ is the map $\phi_A:A\to \L(M_L)$, and since $\iota^1\circ\phi_0=\iota^0$,  $\iota^0:A\to C_\infty$ maps $\phi_A^{-1}(\K(M_L))$ into $\iota^1(\K(M_L))\subset I_\infty$.  Also, $T\otimes_A1_i\in \K(M_L^{\otimes(i+1)})$ implies $T\in \K(M_L)$  by \cite[Lemma 4.5]{FMR}, so $\phi_A^{-1}(\K(M_L))=(\iota^0)^{-1}(I_\infty)$. Thus $\iota^0$ induces an injection of $A/\phi_A^{-1}(\K(M_L))$ into $C_\infty/I_\infty$. Since every element of $\iota^i(C_i)$ has the form $\iota^0(a)+\iota^i(c)$ for some $a\in A$ and $c\in I_i$, $\iota$ has dense range, and must be surjective. Thus $\iota$ is an isomorphism of $A/\phi_A^{-1}(\K(M_L))$ onto $C_\infty/I_\infty$. From above, $I_\infty=J$, and from Proposition~\ref{CP=CP}, $J=\phi^{-1}(\K(M_K))$. So the existence of this isomorphism implies that $C_\infty$ acts by compact operators on $M_K$ if and only if $A$ acts by compact operators on $M_L$. This gives the equivalence of (\ref{lem-fullbi})  and (\ref{lem-fullbii}).  The equivalence of (\ref{lem-fullbii})  and (\ref{lem-fullbiii}) follows from Lemma~\ref{cornervsK}. The range of $\beta$ is $\overline{\beta}(1)C_\infty\overline{\beta}(1)$ by Theorem~\ref{CP=St},  and $C_\infty\overline{\beta}(1)C_\infty=J$ by Proposition~\ref{cornerendo}, so (\ref{lem-fullbiii})  and (\ref{lem-fullbiv}) are equivalent.
\end{proof}

\begin{prop}\label{prop6term}
Let $(A, \alpha, L)$ be an Exel system with $A$ unital. Also suppose that $A$ acts by compact operators on $M_L$ and that $L$ is almost faithful on $\phi^{-1}(\K(M_L))$=A.  Let $(C_\infty,\beta)$ be the Stacey system constructed in Theorem~\ref{CP=St} and let $K:b\mapsto \beta^{-1}\big({\beta}(1)b{\beta}(1)\big)$ be the transfer operator for $(C_\infty, \beta)$ from Proposition~\ref{cornerendo}. Assume that $C_\infty$ is unital, and let $k=k_{C_\infty}:C_\infty\to\O(M_K)$ be the canonical map. Then there exists a cyclic exact sequence
\begin{equation}\label{6term}
\xymatrix{
K_0(C_\infty)\ar[r]^{\beta_*-\id}&K_0(C_\infty)\ar[r]^-{k_*}&K_0(\O(M_K))\ar[d]_{\delta_0}
\\
K_1(\O(M_K))\ar[u]_{\delta_1}&K_1(C_\infty)\ar[l]_-{k_*}&K_1(C_\infty).\ar[l]_{\beta_*-\id}
}
\end{equation}
\end{prop}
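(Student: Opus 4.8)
The plan is to recognise $\O(M_K)$ as the Cuntz--Pimsner algebra of a particularly well-behaved correspondence over $C_\infty$, apply Pimsner's six-term exact sequence, and then identify the resulting self-map of $K_*(C_\infty)$ with $\beta_*-\id$. By Theorem~\ref{CP=St} and Proposition~\ref{CP=CP} we already know that $\O(M_K)=C_\infty\rtimes_{\beta,K}\N=\O(K_\beta,M_K)$ is the Cuntz--Pimsner algebra of the $C_\infty$-correspondence $M_K$, that $\phi:C_\infty\to\L(M_K)$ is injective, and that $K_\beta=\phi^{-1}(\K(M_K))$. The hypothesis that $A$ acts by compact operators on $M_L$ forces $J:=\overline{C_\infty\beta(C_\infty)C_\infty}=C_\infty$, and then $C_\infty$ acts by compact operators on $M_K$ by Lemma~\ref{lem-full}(\ref{lem-fullbii}); together with injectivity of $\phi$ this puts $M_K$ in the class to which Pimsner's results apply. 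Moreover $C_\infty$ is unital with $\langle q(1)\,,\,q(1)\rangle=K(1)=\beta^{-1}(\overline{\beta}(1))=1$, so $M_K$ is full, and $\K(M_K)=\phi(J)=\phi(C_\infty)$ is unital by Lemma~\ref{cornervsK}(a), so $M_K$ is finitely generated and projective over $C_\infty$. Thus \cite[Theorem~4.9]{Pim} produces a cyclic six-term exact sequence whose maps into $K_*(\O(M_K))$ are induced by the canonical homomorphism $k=k_{C_\infty}$ (which is injective here), and whose self-maps of $K_*(C_\infty)$ are $\id-[M_K]_*$, where $[M_K]_*$ is the map on K-theory determined by the class of $M_K$ in $KK(C_\infty,C_\infty)$.

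The key step is to compute $[M_K]_*$. Using $k_M(q(c))=k(c)V$ together with Stacey covariance one checks that $M_K$ is isomorphic, as a $C_\infty$-correspondence, to the right-Hilbert module $C_\infty\overline{\beta}(1)$ on which $C_\infty$ acts on the left by left multiplication and on the right through $\beta$, with inner product $\langle x\overline{\beta}(1)\,,\,y\overline{\beta}(1)\rangle=\beta^{-1}(\overline{\beta}(1)x^*y\overline{\beta}(1))$. Since $\overline{\beta}(1)=\beta(1_{C_\infty})$ is a full projection in $C_\infty$---fullness being precisely the identity $J=C_\infty$ from Lemma~\ref{lem-full}---the module $C_\infty\overline{\beta}(1)$ is a $C_\infty$--$\beta(C_\infty)$ imprimitivity bimodule, and the resulting isomorphism $K_*(C_\infty)\to K_*(\beta(C_\infty))$ is inverse to the isomorphism $K_*(\beta(C_\infty))\to K_*(C_\infty)$ induced by the full-corner inclusion $\beta(C_\infty)=\overline{\beta}(1)C_\infty\overline{\beta}(1)\hookrightarrow C_\infty$. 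Factoring $\beta:C_\infty\to C_\infty$ as this inclusion composed with the corestricted isomorphism $C_\infty\to\beta(C_\infty)$ then shows at once that $\beta_*$ is an automorphism of $K_*(C_\infty)$ and that $[M_K]_*=(\beta_*)^{-1}$.

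It remains to put the sequence in the form \eqref{6term}. The identity $\id-(\beta_*)^{-1}=-(\beta_*)^{-1}\circ(\beta_*-\id)$, the invertibility of $\beta_*$, and the relation $k_*\circ\beta_*=k_*$ (valid because $k(\beta(c))=Vk(c)V^*$ and the homomorphisms $c\mapsto k(c)$ and $c\mapsto Vk(c)V^*$ become unitarily equivalent after adding a zero summand, via a unitary built from $V$) together yield an isomorphism of Pimsner's six-term sequence onto \eqref{6term}, with the $\delta_i$ taken to be the transported connecting maps. I expect the identification $[M_K]_*=(\beta_*)^{\pm1}$ to be the main obstacle: one must pin down how the $KK$-class of the ``$\beta$-twisted'' imprimitivity bimodule $C_\infty\overline{\beta}(1)$ acts on K-theory, and it is exactly the full-corner hypothesis delivered by Lemma~\ref{lem-full} that makes $\overline{\beta}(1)$ full and so makes this computation possible; the rest---fullness and finite generation of $M_K$, injectivity of $k$, and the sign-and-automorphism bookkeeping---is routine given the earlier results. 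One could also avoid \cite{Pim} altogether by realising $C_\infty\rtimes_{\beta,K}\N$ as a full corner in $\varinjlim(C_\infty,\beta)\rtimes\Z$ and noting that, since $\beta_*$ is an automorphism, $K_*(\varinjlim(C_\infty,\beta))=\varinjlim(K_*(C_\infty),\beta_*)\cong K_*(C_\infty)$, so that the Pimsner--Voiculescu exact sequence for this $\Z$-crossed product collapses to \eqref{6term}.
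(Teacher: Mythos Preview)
Your argument is essentially correct, but it takes a substantially different route from the paper's.  The paper's proof is two lines: it observes that, by Lemma~\ref{lem-full}, the hypothesis that $A$ acts by compacts on $M_L$ makes the range of $\beta$ a full corner in $C_\infty$, and then it invokes Paschke's theorem \cite[Theorem~4.1]{P2} for $K$-theory of circle actions (equivalently, of Stacey crossed products by full-corner endomorphisms) together with the isomorphism $C_\infty\times_\beta\N\cong\O(M_K)$ of Proposition~\ref{CP=CP}.  That theorem packages exactly the Pimsner--Voiculescu/dilation argument you sketch in your final paragraph, so in effect the paper outsources to \cite{P2} precisely the step you propose doing by hand.

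Your main line of attack---applying Pimsner's six-term sequence for $\O(M_K)$ and then identifying $[M_K]_*$ with $(\beta_*)^{\pm1}$ via the description of $M_K$ as the twisted imprimitivity bimodule $C_\infty\overline\beta(1)$---is legitimate and self-contained, but it is considerably more work: you must verify the hypotheses of \cite{Pim}, carry out the $KK$-identification, check $k_*\circ\beta_*=k_*$, and then do the sign-and-automorphism bookkeeping to put the sequence in the form~\eqref{6term}.  (Incidentally, your displayed identity has a sign slip: $(\beta_*)^{-1}\circ(\beta_*-\id)=\id-(\beta_*)^{-1}$, without the minus sign.)  The payoff is that your argument makes explicit why the connecting map is $\beta_*-\id$ rather than just $\id-[M_K]_*$, whereas the paper simply inherits this from \cite{P2}.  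Your closing remark about dilating to a $\Z$-action and applying Pimsner--Voiculescu is, in fact, exactly Paschke's method, so you have independently rediscovered the paper's cited source.
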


\begin{proof}
By assumption $A$ acts by compact operators on $M_L$, and hence the range of $\beta$ is a full corner in $C_\infty$ by Lemma~\ref{lem-full}.  Hence Theorem~4.1 of \cite{P2} applies, and,  using the isomorphism $k_{C_\infty}\times V$ of the Stacey crossed product $C_\infty\times_\beta \N$ onto $\O(M_K)$ from Proposition~\ref{CP=CP}, gives \eqref{6term}.
\end{proof}
It may seem at first glance that Proposition~\ref{prop6term} has a lot of hypotheses. But these hypotheses hold in many situations,  and in particular when we start with the system  $(C(X),\alpha, L)$  associated to a local homeomorphism $\sigma:X\to X$ (see  \S\ref{sec-IM}). There  $\O(M_K)$ is isomorphic to the $C^*$-algebra of the Deaconu-Renault groupoid $G$ and $C_\infty$ is isomorphic to the $C^*$-algebra $C^*(R_\infty)$ of a subgroupoid.  Thus Proposition~\ref{prop6term} includes \cite[Theorem~2]{D2} and \cite[Proposition~9.1]{DS}.


\section{Graph algebras as crossed products}\label{ckex}
Let $E=(E^0, E^1, r,s)$ be a locally finite directed graph with no sources or sinks. We use the conventions of \cite{CBMS}.  Briefly, we think of $E^0$ as vertices, $E^1$ as edges, and $r,s:E^1\to E^0$ as describing the range and source of an edge.  Locally finite means that $E$ is both row-finite and column-finite, so that both $r^{-1}(v)$ and $s^{-1}(v)$ are finite for every $v\in E^0$.  We write $E^*$ for the set of finite paths $\mu=\mu_1\dots\mu_n$ satisfying $s(\mu_i)=r(\mu_{i+1})$ for $1\leq i\leq n-1$, and $|\mu|$ for the length $n$ of this path.  Similarly, we write $E^n$ for the set of paths of length $n$ and  $E^\infty$ for the set of infinite paths $\eta=\eta_1\eta_2\dots$.   We equip the path space $E^\infty$ with the product topology inherited from $\Pi_{n=1}^\infty E^1$, which is locally compact and Hausdorff  and has a basis  consisting of the cylinder sets $Z(\mu):=\{\eta\in E^\infty:\eta_i=\mu_i\ \text{for $1\leq i\leq |\mu|$}\}$ parametrised by $\mu\in E^*$.

Now consider the backward shift $\sigma$ on $E^\infty$ defined by $\sigma(\eta_1\eta_2\dots)=\eta_2\eta_3\dots$. Since $E$ has no sinks, $\sigma$ is surjective.  Since  $E$ is column-finite, $\sigma$ is a local homeomorphism which is proper in the sense that inverse images of compact sets are compact (see \cite[\S2.2]{BRV}).  Since $\sigma$ is proper, $\alpha:f\mapsto f\circ\sigma$ is a nondegenerate endomorphism of $C_0(E^\infty)$; since $E$ is column-finite, $\sigma^{-1}(\eta)$ is finite, and  
\[
L(f)(\eta)=\frac{1}{|\sigma^{-1}(\eta)|}\sum_{\sigma(\xi)=\eta} f(\xi)=\frac{1}{|s^{-1}(r(\eta))|}\sum_{s(e)=r(\eta)}f(e\eta)
\]
defines a transfer operator $L:C_0(E^\infty)\to C_0(E^\infty)$ for $\alpha$ by \cite[Lemma~2.2]{BRV}. Moreover, $L$ extends suitably to $M(C_0(E^\infty))$ so that $(C_0(E^\infty), \alpha, L)$ is an Exel system of the sort we've been considering. By \cite[Corollary~4.2]{BRV}, using a partition of unity argument similar to the one sketched in Corollary~\ref{cor-IM}, the action of $C_0(E^\infty)$ on $M_L$ is by compact operators. Hence $C_0(E^\infty)\rtimes_{\alpha, L}\N:=\O(K_\alpha, M_L)=\O(M_L)$.

A Cuntz-Krieger $E$-family in a $C^*$-algebra $B$ consists of a set $\{P_v:v\in E^0\}$ of mutually orthogonal projections and a family $\{T_e:e\in E^1\}$ of partial isometries such that $T^*_eT_e=P_{s(e)}$ for all $e\in E^1$ and $P_v=\sum_{r(e)=v}T_eT_e^*$ for all $v\in E^0$.  The $C^*$-algebra $C^*(E)$ of $E$  is the $C^*$-algebra universal for Cuntz-Krieger $E$-families; we write $\{t,p\}$ for the universal Cuntz-Krieger $E$-family that generates $C^*(E)$.
See \cite{CBMS} for more details.

 For $e\in E^1$, we define $m_e:=|s^{-1}(s(e))|^{1/2}q(\chi_{Z(e)})$. Since $E$ is locally finite with no sources or sinks,  we know from \cite[Theorem~5.1]{BRV} that
\[
T_e:=k_M(m_e)= |s^{-1}(s(e))|^{1/2}k_M(q(\chi_{Z(e)}))\text{\ and\ }P_v:=k_A(\chi_{Z(v)})
\]
form a Cuntz-Krieger $E$-family in $\O(M_L)$, and that $\pi_{T,P}:t_e\mapsto T_e$ and $p_v\mapsto P_v$ is an isomorphism of the graph algebra $C^*(E)$ onto $C_0(E^\infty)\rtimes_{\alpha,L}\N=\O(M_L)$. Note that $\pi_{T,P}$ is equivariant for the gauge actions.  Pulling over the endomorphism $\alpha'$ of Theorem~\ref{relCPalg=St} gives an endomorphism $\beta=\pi_{T,P}^{-1}\circ\alpha'\circ \pi_{T,P}$ of the core
\[C^*(E)^\gamma=\clsp\big\{t_\mu t_\nu^*:\mu,\nu\in E^*\text{ satisfy }|\mu|=|\nu|\big\}.\] 
We are going to compute $\beta$.

Let $\mu=\mu_1\dots\mu_i$ be a finite path  and $m_\mu:=m_{\mu_1}\otimes_A\cdots\otimes_A m_{\mu_i}$. Then
\[
T_\mu=T_{\mu_1}T_{\mu_2}\cdots T_{\mu_i}=k_M(m_{\mu_1})k_M(m_{\mu_2})\cdots k_M(m_{\mu_i})=k_M^{\otimes i}(m_\mu).
\]
To compute $\beta$ we first note that
 $m_\mu=\chi_{Z(r(\mu))}\cdot m_\mu$ and \[\alpha(\chi_{Z(\mu)})=\chi_{Z(\mu)}\circ\sigma=\sum_{s(e)=\mu}\chi_{Z(e)}.\]  So for paths $\mu$ and $\nu$ of length $i$ we have
\begin{align*}
\pi_{T,P}&(\beta(t_\mu t_\nu^*))
=\alpha'(T_\mu T_\nu^*)
=\alpha'(k_M^{\otimes i}(m_\mu)k_M^{\otimes i}(m_\nu)^*)\\
&=\alpha'\big(k_M^{\otimes i}(\chi_{Z(r(\mu))}\cdot m_\mu)k_M^{\otimes i}(\chi_{Z(r(\nu))}\cdot m_\nu)^*\big)\\
&=k_M^{\otimes i+1}(q(\alpha(\chi_{Z(r(\mu))})\otimes m_\mu)k_M^{\otimes i+1}(q(\alpha(\chi_{Z(r(\nu))})\otimes m_\nu)^*\quad  \text{(using \eqref{defalpha'})}\\
&=\sum_{s(e)=r(\mu),\;s(f)=r(\nu)}(|s^{-1}(s(e))s^{-1}(s(f))|)^{-1/2}k_M^{\otimes i+1}(m_e\otimes_Am_\mu)k_M^{\otimes i+1}(m_f\otimes_Am_\nu)^*\\
&=\sum_{s(e)=r(\mu),\;s(f)=r(\nu)}(|s^{-1}(s(e))s^{-1}(s(f))|)^{-1/2}k_M^{\otimes i+1}(m_{e\mu})k_M^{\otimes i+1}(m_{f\nu})^*.
\end{align*}
Now recall that $\alpha'$ is conjugation by an isometry   $V=\lim_\lambda k_M(q(e_\lambda))$, where $\{e_\lambda\}$ is  is any approximate identity of $C_0(E^\infty)$.   A quick calculation with, for example,  the approximate identity $\{e_F=\sum_{e\in F} \chi_{Z(e)}\}$ indexed by finite subsets $F$ of $E^1$, shows that  $W:=\pi_{T,P}^{-1}(V)=\sum_{e\in E^1}|s^{-1}(s(e))|^{-1/2}t_e$. So Theorem~\ref{relCPalg=St} gives:

\begin{prop}\label{CK=cp}
Suppose that $E$ is a locally-finite directed graph with no sources or sinks,  and $\{t_e,p_v\}$ is the universal Cuntz-Krieger $E$-family which generates $C^*(E)$. Then there is an endomorphism $\beta$ of the core $C^*(E)^\gamma$ such that
\begin{equation}\label{defbeta3}
\beta(t_\mu t_\nu^*)=\sum_{s(e)=r(\mu),\;s(f)=r(\nu)}\big(|s^{-1}(r(\mu))|\,|s^{-1}(r(\nu))|\big)^{-1/2}t_{e\mu} t_{f\nu}^*.
\end{equation}
The series
\begin{equation*}
\sum_{e\in E^1}|s^{-1}(s(e))|^{-1/2}t_e
\end{equation*}
converges strictly in $M(C^*(E))$ to an isometry $W$  satisfying $\beta(t_\mu t_\nu^*)=Wt_\mu t_\nu^* W^*$. If $\iota$ is the inclusion of the core $C^*(E)^\gamma$ in $C^*(E)$, then the associated  representation $\iota\times W$ of the Stacey crossed product $C^*(E)^\gamma\times_{\beta}\N$ is an isomorphism   onto $C^*(E)$. 
\end{prop}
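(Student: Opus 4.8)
The plan is to transport every assertion from Theorem~\ref{relCPalg=St} along the gauge-equivariant isomorphism $\pi_{T,P}\colon C^*(E)\to C_0(E^\infty)\rtimes_{\alpha,L}\N=\O(M_L)$ of \cite[Theorem~5.1]{BRV}. As noted above, $C_0(E^\infty)$ acts on $M_L$ by compact operators and $\alpha$ is nondegenerate, so $K_\alpha=\phi^{-1}(\K(M_L))$ and $\O(K_\alpha,M_L)=\O(M_L)$; thus Theorem~\ref{relCPalg=St} with $J=K_\alpha$ produces an endomorphism $\alpha'$ of $\O(M_L)^\gamma$ and an isometry $V\in M(\O(M_L))$ with $k_M(q(f))=k_A(f)V$. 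Since $\pi_{T,P}$ intertwines the gauge actions it restricts to an isomorphism of $C^*(E)^\gamma$ onto $\O(M_L)^\gamma$, and I would set $\beta:=\pi_{T,P}^{-1}\circ\alpha'\circ\pi_{T,P}$ on the core. Formula~\eqref{defbeta3} is then obtained by applying $\pi_{T,P}^{-1}$ to the computation of $\pi_{T,P}(\beta(t_\mu t_\nu^*))$ displayed just before the statement, using that $s(e)=r(\mu)$ and $s(f)=r(\nu)$ for every $e,f$ occurring in the sum, so that $\big(|s^{-1}(s(e))|\,|s^{-1}(s(f))|\big)^{-1/2}=\big(|s^{-1}(r(\mu))|\,|s^{-1}(r(\nu))|\big)^{-1/2}$.

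Next I would produce the isometry $W$. The isomorphism $\pi_{T,P}$ extends to an isomorphism $\overline{\pi_{T,P}}$ of $M(C^*(E))$ onto $M(\O(M_L))$ that is a homeomorphism for the strict topologies, so $W:=\overline{\pi_{T,P}}^{-1}(V)$ is an isometry in $M(C^*(E))$; since $\alpha'=\Ad V|_{\O(M_L)^\gamma}$, applying $\pi_{T,P}^{-1}$ gives $\beta(x)=WxW^*$ for $x\in C^*(E)^\gamma$, in particular $\beta(t_\mu t_\nu^*)=Wt_\mu t_\nu^*W^*$. To identify $W$ with the stated series, recall $T_e=k_M(m_e)=|s^{-1}(s(e))|^{1/2}k_M(q(\chi_{Z(e)}))$, so $\pi_{T,P}^{-1}\big(k_M(q(\chi_{Z(e)}))\big)=|s^{-1}(s(e))|^{-1/2}t_e$. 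The net $\{k_M(q(e_F))\}_F$ with $e_F=\sum_{e\in F}\chi_{Z(e)}$ ($F\subset E^1$ finite) is the image under $k_M\circ q$ of an approximate identity for $C_0(E^\infty)$, hence converges strictly to $V$ by (the proof of) Theorem~\ref{relCPalg=St}; applying the strict homeomorphism $\overline{\pi_{T,P}}^{-1}$ shows $\sum_{e\in F}|s^{-1}(s(e))|^{-1/2}t_e\to W$ strictly.

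It remains to see that $\iota\times W$ is an isomorphism of $C^*(E)^\gamma\times_\beta\N$ onto $C^*(E)$. First, $(\iota,W)$ is Stacey-covariant for $(C^*(E)^\gamma,\beta)$: the inclusion $\iota$ is nondegenerate because $t_\mu t_\nu^*=(t_\mu t_\mu^*)(t_\mu t_\nu^*)$ with $t_\mu t_\mu^*\in C^*(E)^\gamma$, and $W$ is an isometry in $M(C^*(E))$ with $\beta(x)=WxW^*$, so $\iota\times W$ is a well-defined nondegenerate homomorphism into $C^*(E)$. For faithfulness I would apply the dual-invariant uniqueness theorem (Proposition~\ref{diut}) to the gauge action $\gamma$ on $C^*(E)$: it is strongly continuous, fixes $\iota(C^*(E)^\gamma)$ pointwise, and strict continuity of $\overline{\gamma_z}$ applied to the series gives $\overline{\gamma_z}(W)=zW$, while $\iota$ is faithful. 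For surjectivity, note that the range of $\iota\times W$ is a $C^*$-subalgebra containing $\iota(C^*(E)^\gamma)$ and $\iota(C^*(E)^\gamma)W$, hence contains every $p_v$ and, since $(t_et_e^*)W=|s^{-1}(s(e))|^{-1/2}t_e$ by orthogonality of the $t_f$, every $t_e=|s^{-1}(s(e))|^{1/2}(t_et_e^*)W$; as $\{t_e,p_v\}$ generates $C^*(E)$, the range is all of $C^*(E)$, so $\iota\times W$ is an isomorphism.

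There is no serious obstacle: the substance is in Theorem~\ref{relCPalg=St}, and the formula for $\beta$ has already been computed in the text. The only point needing care is the transfer of the strict convergence of $\{k_M(q(e_F))\}$ through $\overline{\pi_{T,P}}$, i.e.\ the standard fact that an isomorphism of $C^*$-algebras extends to a strict homeomorphism of multiplier algebras, together with checking that this extension matches the identifications $T_e\leftrightarrow t_e$, $P_v\leftrightarrow p_v$; once this is in place, everything else is bookkeeping with the Cuntz--Krieger relations.
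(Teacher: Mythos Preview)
Your proposal is correct and follows essentially the same route as the paper: the discussion preceding the statement computes $\beta$ and $W$ by pulling $\alpha'$ and $V$ back along the gauge-equivariant isomorphism $\pi_{T,P}$, and then Theorem~\ref{relCPalg=St} supplies the isomorphism $\iota\times W$. The only difference is that where the paper simply transports the conclusion $\id\times V\colon C_{K_\alpha}\times_{\alpha'}\N\xrightarrow{\cong}\O(M_L)$ of Theorem~\ref{relCPalg=St} across $\pi_{T,P}$, you instead re-verify faithfulness and surjectivity of $\iota\times W$ directly via Proposition~\ref{diut} and the generator calculation $t_e=|s^{-1}(s(e))|^{1/2}(t_et_e^*)W$; this is harmless but unnecessary, since those are exactly the ingredients of the proof of Theorem~\ref{relCPalg=St} itself.
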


\begin{remark}
Suppose that $E$ is the bouquet of $n$ loops on a single vertex, so that $C^*(E)$ is the Cuntz algebra $\OO_n$. Then the endomorphism $\beta$ is not the usual endomorphism $\alpha:\bigotimes_{k=1}^\infty a_k\mapsto e_{11}\otimes\big(\bigotimes_{k=2}^\infty a_{k-1}\big)$ of the UHF core $\OO_n^\gamma=\bigotimes_{k=1}^\infty M_n(\C)$ for which $\OO_n=\OO_n^\gamma\times_\alpha\N$, but it is closely related: if $p=\sum_{i,j=1}^n n^{-1}e_{ij}$, then $p$ is also a rank-one projection, and $\beta\big(\bigotimes_{k=1}^\infty a_k\big)=p\otimes\big(\bigotimes_{k=2}^\infty a_{k-1}\big)$.
\end{remark}

Although we found the endomorphism $\beta$ using our general construction, and were surprised to find it, we have now learned that other authors have shown that Cuntz-Krieger algebras can be realised as a Stacey crossed product by an endomorphism of the core, for example, \cite[Example~2.5]{Ror} and \cite{Kwa} (see Remark~\ref{remark-kwa} below). Now that we have found our $\beta$ we should be able to prove Proposition~\ref{CK=cp} directly. Before doing this we will revisit the need for the hypotheses on the graph $E$: that $E$ is row finite with no sources ensured that the  path space $E^\infty$ is locally compact, but a direct proof should  not go through $C_0(E^\infty)$.
Our formula for $\beta$ only makes sense when $E$ is column-finite and has no sinks: the coefficients in \eqref{defbeta3} are crucial, as we will see in the proof of the next result. It seems likely that column-finiteness is necessary. There is no obvious way to adjust for sinks, either: if we try to interpret empty sums as $0$, then $\beta(t_\mu t_\nu^*)$ would be $0$ if either $\mu$ or $\nu$ ends at a sink, but this property is not preserved by multiplication. (If $\nu$ ends at a sink but $\mu$ doesn't, then we'd have $\beta(t_\mu t_\nu^*)=0$ but $\beta((t_\mu t_\nu^*)(t_\nu t_\mu^*))=\beta(t_\mu t_\mu^*)\not=0$.) So the best we can do is the following.

\begin{thm}\label{genendodecomp}
Suppose that $E$ is a column-finite directed graph with no sinks, and $\{t_e,p_v\}$ is the universal Cuntz-Krieger $E$-family which generates $C^*(E)$. Then there is an endomorphism $\beta$ of the core $C^*(E)^\gamma$ such that
\begin{equation}\label{defbeta666}
\beta(t_\mu t_\nu^*)=\sum_{s(e)=r(\mu),\;s(f)=r(\nu)}\big(|s^{-1}(r(\mu))|\,|s^{-1}(r(\nu))|\big)^{-1/2}t_{e\mu} t_{f\nu}^*.
\end{equation}
The series 
\begin{equation*}
\sum_{e\in E^1}|s^{-1}(s(e))|^{-1/2}t_e
\end{equation*}
converges strictly in $M(C^*(E))$ to an isometry $W$  satisfying $\beta(t_\mu t_\nu^*)=Wt_\mu t_\nu^* W^*$. If $\iota$ is the inclusion of the core $C^*(E)^\gamma$ in $C^*(E)$, then the associated  representation $\iota\times W$ of the Stacey crossed product $C^*(E)^\gamma\times_{\beta}\N$ is an isomorphism  onto $C^*(E)$. 
 \end{thm}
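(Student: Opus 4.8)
The approach is to build the isometry $W$ directly in $M(C^*(E))$, let $\beta$ be conjugation by $W$ restricted to the core, and then recognise $C^*(E)$ as $C^*(E)^\gamma\times_\beta\N$ by means of the dual-invariant uniqueness theorem, Proposition~\ref{diut}. Throughout I would work with the standard dense spanning set $C^*(E)=\clsp\{t_\mu t_\nu^*:\mu,\nu\in E^*\}$ (with the convention $t_v=p_v$) and the description $C^*(E)^\gamma=\clsp\{t_\mu t_\nu^*:|\mu|=|\nu|\}$ of the core.

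First I would set $W_F:=\sum_{e\in F}|s^{-1}(s(e))|^{-1/2}t_e$ for finite $F\subseteq E^1$ and show that $\{W_F\}$ is a bounded, strictly Cauchy net, hence converges strictly to some $W\in M(C^*(E))$. Boundedness comes from the fact that the range projections $t_et_e^*$ are mutually orthogonal over all of $E^1$ --- two with a common range are orthogonal by the Cuntz--Krieger relation, and two with distinct ranges lie under orthogonal vertex projections --- so $W_FW_F^*=\sum_{e\in F}|s^{-1}(s(e))|^{-1}t_et_e^*$ has norm at most $1$. Strict Cauchiness uses column-finiteness: for a path $\mu$ the product $t_et_\mu$ is nonzero only for the finitely many $e$ in $s^{-1}(r(\mu))$, so $W_Ft_\mu$ stabilises, and computing $t_\mu t_\nu^*W_F$ with $t_\nu^*t_e=\delta_{\nu_1,e}t_{\nu_2\cdots\nu_{|\nu|}}^*$ (when $|\nu|\geq 1$, and grouping by source vertex when $|\nu|=0$) gives convergence there too; an $\epsilon/3$ argument over the dense spanning set then shows $\{bW_F\}$ and $\{W_Fb\}$ converge for all $b\in C^*(E)$. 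That $W$ is an isometry comes from $W_F^*W_F=\sum_{e\in F}|s^{-1}(s(e))|^{-1}p_{s(e)}$: grouping by source vertex, the coefficient of $p_v$ tends to $1$ as $F\uparrow E^1$ --- this is where ``no sinks'' enters, guaranteeing $s^{-1}(v)\neq\varnothing$ --- so $W_F^*W_F$ converges strictly to $\sum_{v\in E^0}p_v=1$, and joint strict continuity of multiplication on bounded sets gives $W^*W=1$. (By contrast $WW^*=\sum_e|s^{-1}(s(e))|^{-1}t_et_e^*$ is in general not $1$, so $W$ is a non-unitary isometry.)

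Next, from $W_Ft_\mu\to|s^{-1}(r(\mu))|^{-1/2}\sum_{s(e)=r(\mu)}t_{e\mu}$ I would read off
\[
Wt_\mu t_\nu^*W^*=\big(|s^{-1}(r(\mu))|\,|s^{-1}(r(\nu))|\big)^{-1/2}\sum_{s(e)=r(\mu),\ s(f)=r(\nu)}t_{e\mu}t_{f\nu}^*,
\]
which for $|\mu|=|\nu|$ lies in $C^*(E)^\gamma$. Since $W^*W=1$, the map $\Ad W:T\mapsto WTW^*$ is an endomorphism of $C^*(E)$, and by continuity it carries $C^*(E)^\gamma$ into itself; its restriction is the desired $\beta$, which satisfies \eqref{defbeta666} and $\beta(t_\mu t_\nu^*)=Wt_\mu t_\nu^*W^*$ by construction. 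The inclusion $\iota:C^*(E)^\gamma\hookrightarrow C^*(E)$ is faithful, and it is nondegenerate because the finite sums $\sum_{v\in G}p_v$ form an approximate identity for $C^*(E)$ lying in the core; hence $(\iota,W)$ is a Stacey-covariant representation of $(C^*(E)^\gamma,\beta)$.

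Finally I would apply Proposition~\ref{diut} with $D=C^*(E)$ and the gauge action $\gamma$: it fixes $\iota(C^*(E)^\gamma)$ pointwise, and $\overline{\gamma}_z(W)=\lim_F\gamma_z(W_F)=zW$, so $\iota\times W$ is faithful on $C^*(E)^\gamma\times_\beta\N$. For surjectivity, the range of $\iota\times W$ is a $C^*$-subalgebra containing the core --- in particular all $p_v$ and all $t_et_e^*$ --- together with $W$; since $t_et_e^*W=|s^{-1}(s(e))|^{-1/2}t_e$ (using $t_e^*t_{e'}=\delta_{e,e'}p_{s(e)}$), it contains every $t_e$, hence all of $C^*(E)$, so $\iota\times W$ is an isomorphism. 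I expect the genuinely delicate point to be the first step: establishing simultaneously that $W$ converges strictly and is an isometry. That is where both hypotheses on $E$ are used and where care is needed in reconciling the strict topology on $M(C^*(E))$ with the Cuntz--Krieger relations; everything afterwards is bookkeeping together with an appeal to the dual-invariant uniqueness theorem.
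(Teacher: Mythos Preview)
Your approach is correct and in fact more economical than the paper's. One small slip: your formula $W_FW_F^*=\sum_{e\in F}|s^{-1}(s(e))|^{-1}t_et_e^*$ is wrong, because $t_et_f^*$ need not vanish when $e\neq f$ share a source (think of $\mathcal{O}_2$). Orthogonality of the \emph{range} projections kills $t_e^*t_f$ for $e\neq f$, not $t_et_f^*$; so the clean diagonal formula is for $W_F^*W_F$, which you compute correctly later, and boundedness $\|W_F\|^2=\|W_F^*W_F\|\leq 1$ should be read off from that instead. With that fix the argument goes through.

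The paper takes a genuinely different route for the first half. Rather than building $W$ first, it constructs $\beta$ purely algebraically on the dense subalgebra $\bigcup_i C_i$ of the core: it checks that the right-hand side of \eqref{defbeta666} sends matrix units to matrix units (this is where the coefficients are seen to be ``just right''), and then uses a structural analysis of the core (an appendix proving that each $c\in C_i$ has a unique ``$i$-expansion'' $c=\sum_{j=0}^i c_j$ with $c_j$ supported over singular vertices for $j<i$) to assemble compatible maps $\beta^i:C_i\to C_{i+1}$ into a $*$-homomorphism. Only afterwards does it build $W$ (grouped by source vertex as partial isometries $T_v=\sum_{s(e)=v}|s^{-1}(v)|^{-1/2}t_e$ with orthogonal initial and range projections) and verify $\beta=\Ad W|_{C^*(E)^\gamma}$. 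Your route---construct $W$ first and \emph{define} $\beta$ as $\Ad W$ restricted to the core---gets well-definedness, linearity, multiplicativity and $*$-preservation for free, and completely bypasses the appendix on the core's structure. What the paper's approach buys is an explicit demonstration that the specific coefficients in \eqref{defbeta666} are forced by the matrix-unit relations, and a self-contained description of $C^*(E)^\gamma$ for arbitrary graphs; what yours buys is a much shorter proof of the theorem itself. The endgame---Stacey covariance of $(\iota,W)$, injectivity via Proposition~\ref{diut} applied to the gauge action, and surjectivity via $t_et_e^*W=|s^{-1}(s(e))|^{-1/2}t_e$---is the same in both.
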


\begin{proof}
For each $v\in E^0$, $\{t_\mu t_\nu^*:|\mu|=|\nu|=i, s(\mu)=s(\nu)=v\}$ is a set of matrix units for $\F_i(v)$ (see \cite[page~312]{BPRS}). We claim their images $e_{\mu,\nu}$ under $\beta$ in $\F_{i+1}(v)$, defined by the right-hand side of \eqref{defbeta666}, are also matrix units. The product $e_{\mu,\nu}e_{\kappa,\lambda}$ contains terms like $t_{e\mu}t_{f\nu}^*t_{g\kappa}t_{h\lambda}^*$, which is zero unless $f=g$ and $\nu=\kappa$. Since we then have $r(\nu)=r(\kappa)$, the two central terms in the coefficient are the same, and
\[
e_{\mu,\nu}e_{\kappa,\lambda}=\sum_{s(e)=r(\mu),\;s(f)=r(\nu),\;s(h)=r(\lambda)}|s^{-1}(r(\mu))|^{-1/2}|s^{-1}(r(\nu))|^{-1}|s^{-1}(r(\lambda))|^{-1/2}t_{e\mu}t_{h\lambda}^*.
\]
For fixed $e$ and $h$, there are $|s^{-1}(r(\nu))|$ edges $f$  with $s(f)=r(\nu)$, and for each of these the summand is exactly the same. So 
\[
e_{\mu,\nu}e_{\kappa,\lambda}=\sum_{s(e)=r(\mu),\;s(h)=r(\lambda)}|s^{-1}(r(\mu))|^{-1/2}|s^{-1}(r(\lambda))|^{-1/2}s_{e\mu}s_{h\lambda}^*=e_{\mu,\lambda}.
\]
(Notice that the coefficients in the definition of $e_{\mu,\nu}$ had to be just right for this to work.) Thus $\{e_{\mu,\nu}\}$ is a set of matrix units as claimed, and there is a well-defined homomorphism $\beta_i:\F_i(v)\to \F_{i+1}(v)$ satisfying \eqref{defbeta666} (well, with $\beta$ replaced by $\beta_i$). These combine to give a homomorphism $\beta_i$ of $\F_i=\bigoplus_v \F_i(v)$ into $\F_{i+1}=\bigoplus_v\F_{i+1}(v)$.

To define $\beta$ on $C_i:=\F_0+\F_1+\cdots +\F_i$, we take the $i$-expansion $c=\sum_{j=0}^ic_j$ described in Proposition~\ref{describecore}(b), and define
$\beta^i(c)=\sum_{j=0}^i\beta_j(c_j)$. The uniqueness of the $i$-expansion implies that this gives a well-defined function $\beta^i$ on each $C_i$; to check that they give a well-defined function on $\bigcup_{i=0}^\infty C_i$, we need to check that $\beta^i(c)=\beta^{i+1}(c)$ for $c\in C_i\subset C_{i+1}$. Suppose that $c\in C_i$ has $i$-expansion $c=\sum_{j=0}^ic_j$. Then the $(i+1)$-expansion is $c=\big(\sum_{j=0}^{i-1}c_j\big)+c_i'+d$, where $c_i'\in \EE_i$ and $d\in \F_i\cap \F_{i+1}$ are uniquely determined by $c_i=c_i'+d$. Lemma~\ref{idcap} implies that if $t_\mu t_\nu^*$ belongs to $\F_i\cap \F_{i+1}$, then $v:=s(\mu)=s(\nu)$ satisfies $0<|r^{-1}(v)|<\infty$, and two applications of the Cuntz-Krieger relation at $v$ show that
\begin{align*}
\beta_{i+1}(t_\mu t_\nu^*)&=\beta_{i+1}\Big(\sum_{r(g)=v}t_{\mu g} t_{\nu g}^*\Big)\\
&=\sum_{r(g)=v,\;s(e)=r(\mu g),\;s(f)=r(\nu g)}\big(|s^{-1}(r(\mu))|\,|s^{-1}(r(\nu))|\big)^{-1/2}t_{e\mu g} t_{f\nu g}^*\\
&=\sum_{s(e)=r(\mu),\;s(f)=r(\nu)}\big(|s^{-1}(r(\mu))|\,|s^{-1}(r(\nu))|\big)^{-1/2}t_{e\mu} t_{f\nu}^*\\
&=\beta_i(t_\mu t_\nu^*).
\end{align*}
Thus $\beta_i(c_i')+\beta_{i+1}(d)=\beta_i(c_i')+\beta_i(d)=\beta_i(c_i)$. Thus
\[
\beta^{i+1}(c)=\Big(\sum_{j=0}^{i-1} \beta_i(c_j)\Big)+\beta_i(c_i')+\beta_{i+1}(d)=\Big(\sum_{j=0}^{i-1} \beta_i(c_j)\Big)+\beta_i(c_i)=\beta^i(c).
\]
  
At this stage we have a well-defined map $\beta:\bigcup_{i=0}^\infty C_i\to C^*(E)^\gamma$ satisfying \eqref{defbeta666}. This map is certainly linear, and we need to prove that it is multiplicative. We consider $t_\mu t_\nu^*\in \F_i$ and $t_\kappa t_\lambda^*\in \F_j$, and we may as well suppose $i\leq j$. Then multiplying together the two formulas for $\beta^i(t_\mu t_\nu^*)$ and $\beta^j(t_\kappa t_\lambda^*)$ gives a linear combination of things like $t_{e\mu}t_{f\nu}^*t_{g\kappa} t_{h\lambda}^*$. Because $i\leq j$, this product is $0$ unless $g\kappa=f\nu\kappa'$, and then it is $t_{e\mu\kappa'}t_{h\lambda}^*$. So the sum collapses just as it did in the first paragraph, and we obtain the formula for 
\[
\beta(t_{\mu\kappa'}t_{\lambda}^*)=\beta((t_\mu t_\nu^*)(t_\kappa t_\lambda^*)).
\]
Thus $\beta$ is multiplicative. Since it is clearly $*$-preserving, it is a $*$-homomorphism, and as such is automatically norm-decreasing on each $C_i$. Thus $\beta$ extends to an endomorphism, also called $\beta$, of $\overline{\bigcup_{i=0}^\infty C_i}=C^*(E)^\gamma$. 

Next, note that for each $v\in E^0$, the partial isometries $\{t_e:s(e)=v\}$ have the same initial projection $p_v$, and mutually orthogonal range projections $t_et_e^*$, so\footnote{It is important here that there are only finitely many summands, so  column-finiteness is crucial.} $T_v:=\sum_{s(e)=v}|s^{-1}(v)|^{-1/2}t_e$ is a partial isometry with initial projection $p_v$ and range projection $\sum_{s(e)=v=s(f)}|s^{-1}(v)|^{-1}t_et_f^*$. Now the partial isometries $\{T_v:v\in E^0\}$ have mutually orthogonal initial projections and mutually orthogonal range projections, and hence their sum converges strictly to a partial isometry $W$ with initial projection $\sum_{v\in E^0}p_v=1_{M(C^*(E))}$. In other words, $W$ is an isometry.

The covariance relation $\beta(t_\mu t_\nu^*)=Wt_\mu t_\nu^* W^*$ is easy to check, and the universal property of the Stacey crossed product $(C^*(E)^\gamma\times_{\beta}\N,i_{C^*(E)^\gamma},v)$ gives a homomorphism $\iota\times W$ such that $(\iota\times W)\circ i_{C^*(E)^\gamma}=\iota$ and $(\iota\times W)(v)=W$. This homomorphism satisfies $(\iota\times W)\circ\hat\beta_z=\gamma_z\circ(\iota\times W)$, and since $\iota$ is faithful (being an inclusion), the dual-invariant uniqueness theorem (Proposition~\ref{diut}) implies that $\iota\times W$ is injective. The range contains each $t_e=|s^{-1}(s(e))|^{1/2}\iota(t_et_e^*)W$, and hence $\iota\times W$ is surjective.
\end{proof}

\begin{remark}\label{remark-kwa}
When the graph $E$ is finite and has no sinks, the endomorphism $\beta$ has also been found by Kwa\'sniewski \cite{Kwa}. He proves in \cite[Theorem~3.2]{Kwa} that $C^*(E)$ is isomorphic to a partial-isometric crossed product $C^*(E)^\gamma\rtimes_\beta\Z$ as introduced in \cite{ABL}. He then applies general results about partial-isometric crossed products from \cite{ABL} and \cite[\S1]{Kwa} to $(C^*(E)^\gamma,\beta)$, and recovers many of the main structure theorems for graph $C^*$-algebras, as they apply to finite graphs with no sinks \cite[\S3]{Kwa}.  For such graphs $E$, the endomorphism $\beta$ is conjugation by an isometry in $C^*(E)$, and Theorem~4.15 of \cite{ABL} implies that $C^*(E)^\gamma\rtimes_\beta\Z$ is isomorphic to the Exel crossed product $C^*(E)^\gamma\rtimes_{\beta,K}\N$. Since $\beta$ is injective, we can then deduce from \cite[Theorem~4.7]{E} that $C^*(E)$ is isomorphic to the Stacey crossed product, as in Theorem~\ref{genendodecomp}.
\end{remark}

\appendix

\section{The core in a graph algebra}\label{analysecore}

Suppose that $E$ is an arbitrary directed graph, which could have infinite receivers, infinite emitters, sources and/or sinks.  In Theorem~\ref{genendodecomp}, we wanted a description of the core $C^*(E)^\gamma$ which did not depend on row finiteness, and since we cannot recall seeing a suitable description in the literature, we give one here. As in the row-finite case, which is done in \cite{BPRS} and \cite{CBMS}, our description uses the subspaces
\[
\F_i:=\clsp\{ t_\mu t_\nu^*:|\mu|=|\nu|=i,\ s(\mu)=s(\nu)\},
\]
which one can easily check are in fact $C^*$-subalgebras. (By convention, $\F_0=\clsp\{p_v:v\in E^0\}$.)

We refer to vertices which are either infinite receivers or sources as ``singular vertices''. Recall that no Cuntz-Krieger relation is imposed at a singular vertex $v$, but if $v$ is an infinite emitter then we impose the inequality $p_v\geq \sum_{e\in F}t_et_e^*$ for every finite subset $F$ of $r^{-1}(v)$.

\begin{prop}\label{describecore}
Let $E$ be a directed graph, and define $\F_i$ as above.

\smallskip
\textnormal{(a)} For $i\geq 0$, $C_i:=\F_0+\F_1+\cdots+\F_i$ is a $C^*$-subalgebra of $C^*(E)^\gamma$, $C_i\subset C_{i+1}$ and $C^*(E)^\gamma=\overline{\bigcup_{i=0}^\infty C_i}$.

\smallskip
\textnormal{(b)} For each $i\geq 0$ and each $c\in C_i$, there are unique elements
\[
c_j\in \EE_j:=\clsp\{t_\mu t_\nu^*:|\mu|=|\nu|=j \text{ and }\ s(\mu)=s(\nu)\text{ is singular}\}
\]
for $0\leq j<i$ and $c_i\in\F_i$ such that $c=\sum_{j=0}^i c_j$.
\end{prop}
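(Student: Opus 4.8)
The plan is to prove (a) and (b) together, since the only substantial point is the \emph{uniqueness} of the $i$-expansion in (b) --- which is equivalent to the finite sum $C_i$ being closed, hence to the $C^*$-subalgebra assertion in (a). The routine preliminaries are: each $\F_i$ is a $C^*$-subalgebra (it is $*$-closed, and $t_\mu t_\nu^*\,t_\kappa t_\lambda^*=\delta_{\nu,\kappa}t_\mu t_\lambda^*$ because $\nu,\kappa$ have equal length), in fact $\F_i=\bigoplus_v^{c_0}\F_i(v)$ with $\F_i(v):=\clsp\{t_\mu t_\nu^*:s(\mu)=s(\nu)=v\}\cong\K(\ell^2(\{\mu:|\mu|=i,\ s(\mu)=v\}))$. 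Partitioning $E^0$ into regular vertices ($0<|r^{-1}(v)|<\infty$) and singular ones, write $\F_j=\F_j^{\mathrm{reg}}\oplus\EE_j$; at a regular $v$ the relation $p_v=\sum_{r(e)=v}t_et_e^*$ gives $t_\mu t_\nu^*=\sum_{r(e)=v}t_{\mu e}t_{\nu e}^*$, so $\F_j^{\mathrm{reg}}\subseteq\F_{j+1}$. Then $C_i\subseteq C_{i+1}$ is immediate; $C^*(E)^\gamma=\overline{\bigcup_iC_i}$ follows since the gauge-invariant conditional expectation $\Phi$ has range $\clsp\{t_\mu t_\nu^*:|\mu|=|\nu|\}$ and each such element lies in $\F_{|\mu|}\subseteq C_{|\mu|}$; and iterating $\F_j^{\mathrm{reg}}\subseteq\F_{j+1}$ inside $C_i=\F_0+\cdots+\F_i$ absorbs each $\F_j^{\mathrm{reg}}$ upward, leaving $C_i=\EE_0+\EE_1+\cdots+\EE_{i-1}+\F_i$, which is the existence half of (b).

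\textbf{The tool.} The hard step is that this last sum is direct with bounded coordinate projections. I would use the representation $\pi_\partial$ of $C^*(E)$ on $\ell^2(\partial E)$, where $\partial E$ is the boundary path space (infinite paths, together with finite paths $\mu$ with $s(\mu)$ singular) and $\pi_\partial$ is built from the Cuntz--Krieger $E$-family $P_v=\text{(projection onto }\{\eta:r(\eta)=v\})$, $S_e\delta_\eta=[\,s(e)=r(\eta)\,]\delta_{e\eta}$; one checks directly that this is a Cuntz--Krieger family (full Cuntz--Krieger at regular vertices, the inequality at singular ones). Two structural facts drive the proof: (i) $\pi_\partial(t_\mu t_\nu^*)$ preserves the length of a boundary path whenever $|\mu|=|\nu|$, so $\pi_\partial(C_i)$ commutes with the projections $P^{=m}$ onto $\ell^2(\{\text{boundary paths of length }m\})$; (ii) $\pi_\partial(t_\mu t_\nu^*)$ with $|\mu|=|\nu|=j$ is supported on $\{\text{length}\ge j\}$, and if $s(\mu)=s(\nu)$ is singular then its compression $P^{=j}\pi_\partial(t_\mu t_\nu^*)P^{=j}$ is exactly the rank-one matrix unit $e_{\mu\nu}$ on $\ell^2(\{\text{length-}j\text{ paths ending at a singular vertex}\})$. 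Consequently $\sigma_k:=P^{=k}\pi_\partial(\cdot)P^{=k}$ is a $*$-homomorphism on $C_i$ that vanishes on $\F_j$ for $j>k$ (in particular on $\F_i$) and restricts to an \emph{isometric} isomorphism of $\EE_k$ onto $\bigoplus^{c_0}_{w\ \mathrm{sing}}\K(\ell^2(\{\mu:|\mu|=k,\ s(\mu)=w\}))$ --- here one uses that the $t_\mu t_\nu^*$ are nonzero matrix units, which is itself visible from $\pi_\partial$.

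\textbf{The estimate.} Form the external direct sum $D:=\EE_0\oplus\cdots\oplus\EE_{i-1}\oplus\F_i$ and the summing map $\Theta:D\to C^*(E)$, $(c_0,\dots,c_i)\mapsto c_0+\cdots+c_i$, whose image is $C_i$. I would show $\Theta$ is bounded below by a triangular recursion: since $\sigma_k$ kills $\F_i$ and kills $\EE_j$ for $j>k$, one has $\sigma_k(\Theta(c))=\sigma_k(c_k)+\sum_{j<k}\sigma_k(c_j)$, hence $\|c_0\|=\|\sigma_0(\Theta(c))\|\le\|\Theta(c)\|$, then $\|c_1\|=\|\sigma_1(c_1)\|\le\|\sigma_1(\Theta(c))\|+\|\sigma_1(c_0)\|\le 2\|\Theta(c)\|$, and inductively $\|c_k\|\le 2^k\|\Theta(c)\|$ for $k<i$, so finally $\|c_i\|=\|\Theta(c)-\sum_{k<i}c_k\|\le 2^i\|\Theta(c)\|$; thus $\|c\|_D=\max_k\|c_k\|\le 2^i\|\Theta(c)\|$. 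Being bounded below, $\Theta$ is injective --- this is precisely the uniqueness in (b) --- and has closed range, so $C_i$ is closed; as $C_i$ is moreover a $*$-subalgebra ($\F_a\F_b\subseteq\F_{\max(a,b)}\subseteq C_i$), it is a $C^*$-subalgebra, finishing (a).

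\textbf{Expected obstacle.} I expect the real work to be organizational rather than conceptual: pinning down the boundary path space and $\pi_\partial$ correctly for a \emph{completely} general graph (infinite emitters, infinite receivers, sources and sinks simultaneously allowed) and verifying (i)--(ii), the delicate point being that the length-$k$ slice of $\partial E$ is exactly the disjoint union, over singular vertices $w$, of the length-$k$ paths ending at $w$ --- this is what makes $\sigma_k$ detect $\EE_k$ isometrically rather than detecting some proper corner of it. A more self-contained route to the uniqueness alone (though not to the closedness) is a direct induction on $i$: any $w\in(\EE_0+\cdots+\EE_{i-1})\cap\F_i$ satisfies $t_e^*wt_f\in(\EE_0+\cdots+\EE_{i-2})\cap\F_{i-1}$ for all edges $e,f$ (compression by $t_e^*(\cdot)t_f$ lowers every level by one), so the case $i-1$ forces $w=0$ --- after first killing the $\EE_0$-component by compressing with the defect multiplier $p_v-\overline{\sum}_{r(e)=v}t_et_e^*$ at each singular $v$.
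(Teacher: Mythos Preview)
Your argument is correct, and it takes a genuinely different route from the paper's. One small slip: uniqueness of the expansion and closedness of $C_i$ are \emph{not} equivalent (injectivity of a bounded map between Banach spaces does not imply closed range); but your bounded-below estimate $\|c\|_D\le 2^i\|\Theta(c)\|$ gives both at once, so the proof itself is unaffected.

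The paper handles (a) and (b) purely internally. For (a) it never touches an estimate: it observes that $\F_{i+1}$ is an ideal in the $C^*$-algebra generated by $C_{i+1}$, and then uses the standard fact that (closed subalgebra)$+$(closed ideal) is closed to conclude $C_{i+1}=C_i+\F_{i+1}$ is a $C^*$-subalgebra by induction on $i$. For (b), rather than building a representation, it proves a lemma computing $\F_j(v)\cap D_{j+1,i}$ (where $D_{j+1,i}=\F_{j+1}+\cdots+\F_i$): this intersection is an ideal in the simple algebra $\F_j(v)\cong\K(\ell^2)$, so it is $0$ or all of $\F_j(v)$; a short trace argument then shows it is $0$ exactly when $v$ is singular. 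Iterating gives $C_i=\EE_0+\cdots+\EE_{i-1}+\F_i$, and uniqueness is a short induction using $\EE_j\cap D_{j+1,i}=\{0\}$.

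What each approach buys: the paper's argument is self-contained inside $C^*(E)$ and avoids constructing any auxiliary representation, at the cost of the (slightly slick) trace step for infinite receivers. Your approach externalises the problem via the boundary-path representation, which makes transparent \emph{why} singular vertices are the obstruction---they are exactly the terminal points of finite boundary paths, so the length-$k$ compression $\sigma_k$ sees $\EE_k$ isometrically---and gives explicit bounds on the coordinate projections. Your expected obstacle is real but manageable: the only thing you actually need from $\pi_\partial$ is that the matrix units in $\EE_k$ map to nonzero matrix units on $\ell^2(\{\mu:|\mu|=k,\ s(\mu)\text{ singular}\})$, not faithfulness on all of $C^*(E)$.
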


Since $C_i\subset C_j$ for $i<j$, an element of $C_i$ has lots of the expansions described in (b). We refer to the one obtained by viewing $c$ as an element of $C_j$ as the \emph{$j$-expansion} of~$c$. 

\begin{proof}[Proof of Proposition~\ref{describecore}\textnormal{(a)}]
Since $\bigcup_i C_i$ is a vector space containing every element of the form $t_\mu t_\nu^*$ with $|\mu|=|\nu|$, it is dense in $C^*(E)^\gamma$, and we trivially have $C_i\subset C_{i+1}$. We prove that $C_i$ is a $C^*$-subalgebra by induction on $i$. For $i=0$, it's straightforward to see that $C_0=\F_0$ is a $C^*$-subalgebra. Suppose that $C_i$ is a $C^*$-subalgebra. If $|\kappa|=|\lambda|=i+1$ and $|\mu|=|\nu|\leq i+1$, then the formula
\[
(t_\mu t_\nu^*)(t_\kappa t_\lambda^*)=
\begin{cases}
0&\text{ unless $\kappa$ has the form $\nu\kappa'$}\\
t_{\mu\kappa'}t_\lambda^*&\text{if $\kappa=\nu\kappa'$}
\end{cases}
\]
shows that $\F_{i+1}$ is an ideal in the $C^*$-subalgebra $C^*(C_{i+1})$ of $C^*(E)^\gamma$ generated by $C_{i+1}$. Since $C_i$ is a $C^*$-subalgebra of $C^*(C_{i+1})$, the sum $C_{i+1}=C_i+\F_{i+1}$ is also a $C^*$-subalgebra (and in fact $C^*(C_{i+1})=C_i+\F_{i+1}=C_{i+1}$).
\end{proof}

For part (b), we need to do some preparation. The standard argument of \cite[\S2]{BPRS} or \cite[Chapter~3]{CBMS} shows that, for fixed $i$ and $v\in E^0$, 
\[
\{t_\mu t_\nu^*:|\mu|=|\nu|=i,\ s(\mu)=s(\nu)=v\}
\]
is a set of non-zero matrix units, and hence their closed span $\F_i(v)$ is a $C^*$-subalgebra of $\F_i$ isomorphic to $\K(\ell^2(E^i\cap s^{-1}(v)))$. Since $\F_i(v)\F_i(w)=\{0\}$ for $v\not=w$, $\F_i$ is the $C^*$-algebraic direct sum $\bigoplus_{v\in E^0}\F_i(v)$. For $j$ satisfying $0\leq j\leq i$, we set
\[
D_{j,i}:=\F_j+\cdots+\F_i=\clsp\{ t_\mu t_\nu^*:j\leq |\mu|=|\nu|\leq i,\ s(\mu)=s(\nu)\},
\]
which is another $C^*$-subalgebra by the argument in the previous proof. It is an ideal in $C_i$. Now we prove a lemma.

\begin{lemma}\label{idcap}
For every $i\geq 1$ and every $j<i$, we have
\[
\F_j(v)\cap D_{j+1,i}=
\begin{cases}
0&\text{if $v$ is a singular vertex}\\
\F_j(v)&\text{if $0<|r^{-1}(v)|<\infty$.}
\end{cases}
\]
\end{lemma}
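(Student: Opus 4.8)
The two cases in the statement are exhaustive: either $v$ is singular (that is, $r^{-1}(v)=\emptyset$ or $|r^{-1}(v)|=\infty$) or $0<|r^{-1}(v)|<\infty$. Put $\Lambda:=\{\mu\in E^j:s(\mu)=v\}$. If $\Lambda=\emptyset$ then $\F_j(v)=\{0\}$ and there is nothing to prove, so assume $\Lambda\neq\emptyset$; recall from the discussion preceding the lemma that $\F_j(v)\cong\K(\ell^2(\Lambda))$ with the $\{t_\mu t_\nu^*:\mu,\nu\in\Lambda\}$ as matrix units. In particular, for $x\in\F_j(v)$ and $\mu,\nu\in\Lambda$ one has $(t_\mu t_\mu^*)\,x\,(t_\nu t_\nu^*)=c\,t_\mu t_\nu^*$, where $c$ is the $(\mu,\nu)$ matrix entry of $x$, and $x=0$ if and only if all such entries vanish.

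Suppose first that $0<|r^{-1}(v)|<\infty$, so the Cuntz--Krieger relation $p_v=\sum_{r(e)=v}t_et_e^*$ holds and the sum is finite. For $\mu,\nu\in\Lambda$ we have $s(\mu)=s(\nu)=v$, hence $t_\mu=t_\mu p_v$, and inserting $p_v$ gives
\[
t_\mu t_\nu^*=t_\mu p_v t_\nu^*=\sum_{r(e)=v}t_\mu t_e t_e^* t_\nu^*=\sum_{r(e)=v}t_{\mu e}t_{\nu e}^*,
\]
a finite sum of elements of $\F_{j+1}$. Since $j<i$ we have $\F_{j+1}\subseteq D_{j+1,i}$, so $t_\mu t_\nu^*\in D_{j+1,i}$; taking closed spans gives $\F_j(v)\subseteq D_{j+1,i}$, i.e.\ $\F_j(v)\cap D_{j+1,i}=\F_j(v)$.

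Now suppose $v$ is singular; we must show $\F_j(v)\cap D_{j+1,i}=\{0\}$. The plan is to use the representation of $C^*(E)$ on $\ell^2(\partial E)$, where $\partial E$ is the boundary path space consisting of the infinite paths together with the finite paths whose source is a singular vertex. The operators $\pi(p_w)=$ the projection onto $\{y\in\partial E:r(y)=w\}$ and $\pi(t_e)\delta_y=\delta_{ey}$ if $r(y)=s(e)$ and $0$ otherwise are readily checked to form a Cuntz--Krieger $E$-family (the relation at a regular vertex $w$ holds precisely because $\emptyset_w\notin\partial E$ for such $w$), hence induce a representation $\pi$. Since $v$ is singular, each $\mu\in\Lambda$ is a genuine element of $\partial E$, and two facts are immediate: $\pi(t_\mu t_\mu^*)\delta_\mu=\delta_\mu$ (because $t_\mu^*\delta_\mu=\delta_{\emptyset_v}$), and $\pi(t_\lambda^*)\delta_\mu=0$ whenever $|\lambda|>j$, since a path of length $>j$ cannot be an initial segment of $\mu$. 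Applying the second fact to the spanning set $\{t_\kappa t_\lambda^*:j+1\le|\kappa|=|\lambda|\le i\}$ of $D_{j+1,i}$ and using continuity of $\pi$ shows that $\pi(x)\delta_\mu=0$ for every $x\in D_{j+1,i}$ and every $\mu\in\Lambda$.

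To finish, let $x\in\F_j(v)\cap D_{j+1,i}$ and $\mu,\nu\in\Lambda$, and write $(t_\mu t_\mu^*)x(t_\nu t_\nu^*)=c\,t_\mu t_\nu^*$. Applying $\pi$ and evaluating at $\delta_\nu$, the left side equals $\pi(t_\mu t_\mu^*)\pi(x)\pi(t_\nu t_\nu^*)\delta_\nu=\pi(t_\mu t_\mu^*)\big(\pi(x)\delta_\nu\big)=0$ by the previous paragraph, while the right side equals $c\,\pi(t_\mu t_\nu^*)\delta_\nu=c\,\delta_\mu$; hence $c=0$. As $\mu,\nu\in\Lambda$ were arbitrary, every matrix entry of $x$ vanishes, so $x=0$, completing the singular case. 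I expect the only real content to lie in this singular case, and specifically in bringing in the boundary-path representation: the point to get right is that a finite path $\mu$ is a legitimate element of $\partial E$ exactly when its source is singular, which is what makes the vector $\delta_\mu$ available there and (correctly) unavailable in the regular case. The verification that the path-space family satisfies the Cuntz--Krieger relations, and the manipulations in the regular case, are routine.
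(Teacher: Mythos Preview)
Your argument is correct. The regular case is handled exactly as in the paper, via the Cuntz--Krieger relation at $v$. For the singular case, however, your route is genuinely different from the paper's.

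The paper argues internally and by contraposition. It first notes that $\F_j(v)\cap D_{j+1,i}$ is an ideal in the simple algebra $\F_j(v)\cong\K(\ell^2(\Lambda))$, hence is $\{0\}$ or $\F_j(v)$. Assuming the intersection is all of $\F_j(v)$, the authors pick $\mu\in\Lambda$ and show that $t_\mu t_\mu^*\in D_{j+1,i}$ forces $v$ to receive an edge (so $v$ is not a source), and then that in fact $t_\mu t_\mu^*\in\F_{j+1}=\bigoplus_w\F_{j+1}(w)$. Since a projection in a $c_0$-direct sum of compact-operator algebras must have finite trace, while $t_\mu t_\mu^*$ dominates the mutually orthogonal rank-one projections $t_{\mu e}t_{\mu e}^*$ for each $e\in r^{-1}(v)$, one gets $\operatorname{Tr}(t_\mu t_\mu^*)\ge|r^{-1}(v)|$, so $|r^{-1}(v)|<\infty$. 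Thus $v$ is not singular.

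Your approach is external and direct: you invoke the boundary-path representation on $\ell^2(\partial E)$ and exploit the defining feature of singular vertices, namely that finite paths ending at them are genuine elements of $\partial E$. This produces vectors $\delta_\mu$ that are visibly annihilated by $\pi(D_{j+1,i})$ but not by $\pi(\F_j(v))$, and the matrix-unit compression cleanly extracts each coefficient of $x$. The advantage of your method is conceptual transparency --- it pinpoints exactly why singularity matters --- at the cost of importing the path-space representation; the paper's trace argument is self-contained and avoids any auxiliary representation, but is a bit more delicate to set up.
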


\begin{proof}
The result is trivially true if $\F_j(v)$ is $\{0\}$, so we suppose it isn't. Since $\F_j(v)\cap D_{j+1,i}$ is an ideal in $\F_j(v)$, it is either $\{0\}$ or $\F_j(v)$.    
First suppose that $v$ is not a singular vertex. Then the Cuntz-Krieger relation at $v$ implies that $\F_j(v)\subset D_{j+1,i}$, and hence $\F_j(v)\cap D_{j+1,i}=\F_j(v)$.

Now suppose that $\F_j(v)\cap D_{j+1,i}=\F_j(v)$; we will show that $v$ is not singular. Choose $\mu\in $$E^j$ with $s(\mu)=v$. Then $t_\mu t_\mu^*$ is a non-zero element of element of $D_{j+1,i}$, so there exist $\kappa$ and $\lambda$ satisfying $j+1\leq|\kappa|=|\lambda|\leq i$ and $(t_\mu t_\mu^*)(t_\kappa t_\lambda^*)\not= 0$. But this implies that $\kappa$ has the form $\mu\kappa'$, and $v=s(\mu)$ cannot be a source. It also implies that $t_\mu t_\mu^*t_\kappa t_\kappa^*$ is non-zero, and hence so is the larger projection $t_\mu t_\mu^* t_{\mu\kappa_{j+1}}t_{\mu\kappa_{j+1}}^*$. Thus $\F_j(v)\cap \F_{j+1}\neq \{0\}$, and since $\F_j(v)\cap \F_{j+1}$ is  an  ideal in $\F_j(v)$, it follows that $\F_j(v)\cap \F_{j+1}=\F_j(v)$.

We now know that the projection $t_\mu t_\mu^*$ belongs to $\F_{j+1}$, and hence is a projection in a $C^*$-algebraic direct sum $\bigoplus_{w\in E^0}\F_{j+1}(w)$. The norms of elements in this direct sum are arbitrarily small off finite subsets of $E^0$, and projections have norm $0$ or $1$, so there are a finite subset $F$ of $E^0$ and projections $q_w\in \F_{j+1}(w)$ such that $t_\mu t_\mu^*=\sum_{w\in F}q_w$. Each $q_w$ is a projection in $\F_{j+1}(w)=\K(\ell^2(E^{j+1}\cap s^{-1}(w)))$, and hence has finite trace. Thus $t_\mu t_\mu^*$ has finite trace. On the other hand, for every edge $e$ with $r(e)=s(\mu)=v$, we have $t_\mu t_\mu^*\geq t_{\mu e}t_{\mu e}^*$; since $\{t_{\mu e}t_{\mu e}^*:r(e)=s(\mu)\}$ is a family of mutually orthogonal projections of trace $1$, we have
\[
\Tr (t_\mu t_\mu^*)\geq \sum_{r(e)=s(\mu)}\Tr(t_{\mu e}t_{\mu e}^*)=|r^{-1}(s(\mu))|=|r^{-1}(v)|.
\]
Since we have already eliminated the possibility that $v$ is a source, this proves that it is not a singular vertex, as required.
\end{proof}

\begin{proof}[Proof of Proposition~\ref{describecore}\textnormal{(b)}]
Lemma~\ref{idcap} implies that
\[
\F_j\cap D_{j+1,i}=\bigoplus\{\F_j(v):0<|r^{-1}(v)|<\infty\},
\]
and that $\F_j$ is the direct sum of $\F_j\cap D_{j+1,i}$ and 
\[
\EE_j=\bigoplus\{\F_j(v):\text{$v$ is a singular vertex}\}.
\]
Since $D_{j,i}=\F_j+D_{j+1,i}$, we have
\begin{align*}
C_i&=\F_0+D_{1,i}=(\EE_0+(\F_0\cap D_{1,i}))+D_{1,i}=\EE_0+D_{1,i}\\
&=\EE_0+(\EE_1+(\F_1\cap D_{2,i}))
=\EE_0+\EE_1+D_{2,i}\\
&\ \vdots \\
&=\EE_0+\cdots \EE_{i-1}+D_{i,i}=\EE_0+\cdots \EE_{i-1}+\F_i,
\end{align*}
which shows that $c$ has the claimed expansion.

To establish uniqueness, suppose that $c_j,d_j\in \EE_j$ for $j<i$, that $c_i, d_i\in \F_i$, and that $\sum_{j=0}^ic_j=\sum_{j=0}^id_j$. Then $c_0-d_0=\sum_{j=1}^i(d_j-c_j)$ belongs to $\EE_0\cap \F_0$, because the left-hand side does, and to $D_{1,i}$, because the right-hand side does. Since $\F_0=\EE_0\oplus(\F_0\cap D_{1,i})$, we have $\EE_0\cap (\F_0\cap D_{1,i})=\{0\}$, and we deduce that $c_0=d_0$ and $\sum_{j=1}^ic_j=\sum_{j=1}^id_j$. Now an induction argument using $\EE_j\cap (F_j\cap D_{j+1,i})=\{0\}$ gives the result. 
\end{proof}

\section{A question posed by Ionescu and Muhly}\label{sec-IMqu}
Let $\sigma$ be a local homeomorphism of a compact Hausdorff space $X$ and let $G$ be the Deaconu-Renault groupoid described in \S\ref{sec-IM}.  When $M_L$ has an orthonormal basis $\{m_i\}_{i=1}^n$,  Ionescu and Muhly ask on page~201 of \cite{IM} if  $C^*(G)$ is isomorphic to a  Stacey multiplicity-$n$ crossed product involving the $n$ isometries $\{k_{M_L}(m_i)\}_{i=1}^n$. Their question was prompted by their Theorem~4.3. In this section we show that the answer to this question is  negative (see Example~\ref{IM-qu}).  We start by generalising \cite[Theorem~4.3]{IM}.

Recall that a finite set $\{m_i:1\leq i\leq n\}$ in a right Hilbert $A$-module $M$ is a \emph{Parseval frame} if
\begin{equation}\label{defParseval}
m=\sum_{i=1}^nm_i\cdot\langle m_i, m\rangle_A\ \text{ for every $m\in M$;}
\end{equation}
equivalently, $\{m_i\}$ is a Parseval frame if and only if the finite-rank operator $\sum_i\Theta_{m_i,m_i}$ is the
identity operator $1\in \L(M)$. 
The discussion in \cite[page~5]{PR1} shows that $M$ has a (finite) Parseval frame exactly when $M$ is finitely generated and projective. 

\begin{prop}\label{prop-universal}
Let $(A,\alpha,L)$ be an Exel system with $A$ unital. Suppose $\{m_i\}_{i=1}^n$ is a Parseval frame for the associated right-Hilbert bimodule $M_L$, and set $s_i=k_{M_L}(m_i)$ for $1\leq i\leq n$.
\begin{enumerate}
\item\label{prop-universal-a} The set $k_A(A)\cup\{s_i:1\leq i\leq n\}$ generates $A\rtimes_{\alpha,L}\N=\O(M_L)$, $\sum_{i=1}^n s_is_i^*=1$ and  
\begin{equation}\label{H1forjS}
s_i^*k_A(a)s_j=k_A(\langle m_i\,,\, a\cdot m_j\rangle_L)\quad\text{for $a\in A$.}
\end{equation}
\item\label{prop-universal-b} Suppose that $\pi$ is a unital representation of $A$ on $\H$, and that $\{S_i:1\leq i\leq n\}\subset B(\H)$ satisfies $\sum_{i=1}^n S_iS_i^*=1$ and
\begin{equation*}\label{eq-family}
S_i^*\pi(a)S_j=\pi(\langle m_i\,,\, a\cdot m_j\rangle_L)\quad\text{for $a\in A$.}
\end{equation*}
Then there is a representation $\pi\rtimes\{S_i\}$ of $A\rtimes_{\alpha,L}\N$ on  a Hilbert space $\H$ such that $\pi\rtimes\{S_i\}\circ k_A=\pi$ and $\pi\rtimes\{S_i\}(s_i)=S_i$ for $1\leq i\leq n$.

\item\label{prop-universal-c}  If $A$ is commutative, then $\alpha(a)\cdot m=m\cdot a$ for all $a\in A$ and $m\in M_L$, and
\begin{equation}\label{Staceycovariance}
k_A(\alpha(a))=\sum_{j=1}^n s_jj_A(a)s_j^*.
\end{equation}
\end{enumerate}
\end{prop}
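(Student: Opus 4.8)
The plan is to treat the three parts in order, using the universal Cuntz--Pimsner covariant representation $(k_M,k_A)$ of $\O(M_L)$ and reducing everything to manipulations of the frame identity $m=\sum_i m_i\cdot\langle m_i,m\rangle_L$. The basic observation, used throughout, is that the Parseval condition says exactly $\sum_{i=1}^n\Theta_{m_i,m_i}=1_{\L(M_L)}$, so that $\phi(a)=\phi(a1)=\phi(a)\sum_i\Theta_{m_i,m_i}=\sum_i\Theta_{a\cdot m_i,m_i}\in\K(M_L)$ for every $a\in A$; hence $\phi^{-1}(\K(M_L))=A$, and since one also has $K_\alpha=A$, the Exel crossed product $A\rtimes_{\alpha,L}\N=\O(K_\alpha,M_L)$ is the Cuntz--Pimsner algebra $\O(M_L)$, on which $(k_M,k_A)$ is coisometric on all of $A$.

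For part (a): the frame identity gives $k_M(m)=\sum_i k_M(m_i\cdot\langle m_i,m\rangle_L)=\sum_i s_ik_A(\langle m_i,m\rangle_L)$, and since $\O(M_L)=\clsp\{k_M^{\otimes p}(M_L^{\otimes p})k_M^{\otimes q}(M_L^{\otimes q})^*\}$ with $k_M^{\otimes p}$ of a simple tensor a product of $k_M(m)$'s, the set $k_A(A)\cup\{s_i\}$ generates $\O(M_L)$. Applying the homomorphism $(k_M,k_A)^{(1)}$ to $\phi(1)=\sum_i\Theta_{m_i,m_i}$ and using coisometry on $A$ yields $1=k_A(1)=(k_M,k_A)^{(1)}(\phi(1))=\sum_i k_M(m_i)k_M(m_i)^*=\sum_i s_is_i^*$. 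Finally \eqref{H1forjS} is immediate from the representation relations $k_A(a)k_M(m)=k_M(a\cdot m)$ and $k_M(m)^*k_M(n)=k_A(\langle m,n\rangle_L)$: indeed $s_i^*k_A(a)s_j=k_M(m_i)^*k_M(a\cdot m_j)=k_A(\langle m_i,a\cdot m_j\rangle_L)$.

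For part (b), given $(\pi,\{S_i\})$ as hypothesised, I would define $\Psi\colon M_L\to B(\H)$ by $\Psi(m):=\sum_{i=1}^n S_i\pi(\langle m_i,m\rangle_L)$ --- the only candidate compatible with $\Psi(m_i)=S_i$ and the frame identity --- and show $(\Psi,\pi)$ is a Cuntz--Pimsner covariant representation of $M_L$. First, putting $a=1$ in the hypothesis gives $S_i^*S_j=\pi(\langle m_i,m_j\rangle_L)$, and then $\sum_j S_jS_j^*=1$ gives $\Psi(m_i)=\sum_j S_jS_j^*S_i=S_i$. Linearity of $\Psi$ and $\Psi(m\cdot a)=\Psi(m)\pi(a)$ are routine; $\Psi(a\cdot m)=\pi(a)\Psi(m)$ is obtained by inserting $\sum_iS_iS_i^*=1$ in front of $\pi(a)\Psi(m)$, applying $S_i^*\pi(a)S_j=\pi(\langle m_i,a\cdot m_j\rangle_L)$, and then using the frame identity to collapse the double sum back to $\Psi(a\cdot m)$; and $\Psi(m)^*\Psi(n)=\pi(\langle m,n\rangle_L)$ follows from $S_i^*S_j=\pi(\langle m_i,m_j\rangle_L)$ and two applications of the frame identity. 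Coisometry on $\phi^{-1}(\K(M_L))=A$ is then $(\Psi,\pi)^{(1)}(\phi(a))=\sum_i\Psi(a\cdot m_i)\Psi(m_i)^*=\pi(a)\sum_iS_iS_i^*=\pi(a)$, and the universal property of $\O(M_L)=\O(\phi^{-1}(\K(M_L)),M_L)=A\rtimes_{\alpha,L}\N$ produces $\pi\rtimes\{S_i\}:=\Psi\times\pi$ with $(\pi\rtimes\{S_i\})\circ k_A=\pi$ and $(\pi\rtimes\{S_i\})(s_i)=\Psi(m_i)=S_i$.

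For part (c): when $A$ is commutative, $\alpha(a)\cdot q(b)=q(\alpha(a)b)=q(b\alpha(a))=q(b)\cdot a$ on the dense submodule $q(A)$, so $\alpha(a)\cdot m=m\cdot a$ for all $m\in M_L$ by continuity; then $s_jk_A(a)=k_M(m_j\cdot a)=k_M(\alpha(a)\cdot m_j)=k_A(\alpha(a))k_M(m_j)$, and summing over $j$ with $\sum_j s_js_j^*=1$ gives $k_A(\alpha(a))=\sum_j s_jk_A(a)s_j^*$, which is \eqref{Staceycovariance} (the $j_A$ there being $k_A$ after the identification of Remark~\ref{idcpCP}). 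The one step I expect to require genuine care is the verification in part (b) that $(\Psi,\pi)$ is a representation of $M_L$, specifically the identities $\Psi(a\cdot m)=\pi(a)\Psi(m)$ and $\Psi(m)^*\Psi(n)=\pi(\langle m,n\rangle_L)$: these are the only places where both hypotheses ($\sum_iS_iS_i^*=1$ and the formula for $S_i^*\pi(a)S_j$) must be played against the frame identity, and one must be careful about the order in which the double sums are collapsed. Everything else is bookkeeping with the universal properties recalled in \S\ref{mainthm}.
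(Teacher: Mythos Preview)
Your proposal is correct and follows essentially the same approach as the paper: the same observation that $\phi(a)=\sum_i\Theta_{a\cdot m_i,m_i}\in\K(M_L)$ forces $\phi^{-1}(\K(M_L))=A=K_\alpha$, the same use of $(k_M,k_A)^{(1)}(\phi(1))=k_A(1)$ for $\sum_i s_is_i^*=1$, the identical definition $\Psi(m)=\sum_iS_i\pi(\langle m_i,m\rangle_L)$ with the same verifications in part~(b), and the same identity $\alpha(a)\cdot m=m\cdot a$ in part~(c). Your derivation of \eqref{Staceycovariance} is marginally slicker than the paper's --- you first establish $s_jk_A(a)=k_A(\alpha(a))s_j$ and then sum against $s_j^*$, whereas the paper inserts $\sum_is_is_i^*=1$ on both sides of $k_A(\alpha(a))$ and collapses the resulting double sum via \eqref{H1forjS} --- but the content is the same.
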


\begin{proof}
A straightforward calculation using the reconstruction formula \eqref{defParseval} shows that for every $a\in A$ we have
\begin{equation}\label{LHactionFR}
\phi(a)=\sum_{i=1}^n\Theta_{a\cdot m_i,m_i}
\end{equation}
as operators on $M_L$; thus $\phi^{-1}(\K(M_L))=A=K_\alpha$, and 
$\O(M_L)=\O(K_\alpha, M_L)=A\rtimes_{\alpha,L}\N$.

We have
\begin{align*}
\sum_{i=1}^n s_is_i^*&=\sum_{i=1}^nk_{M_L}(m_i)k_{M_L}(m_i)^*=\sum_{i=1}^n(k_{M_L},\pi)^{(1)}(\Theta_{m_i,m_i})\\
&=(k_{M_L},k_A)^{(1)}(\phi(1))=k_A(1)=1.
\end{align*}
For  $a\in A$ we have
\[
s_i^*k_A(a)s_j=k_{M_L}(m_i)^*k_A(a)k_{M_L}(m_j)=k_{M_L}(m_i)^*k_{M_L}(a\cdot m_j)=k_A(\langle m_i\,,\, a\cdot m_j\rangle_L).
\]
This gives (\ref{prop-universal-a}).

Now suppose we have $(\pi, \{S_i\}_{i=1}^n)$ as in (\ref{prop-universal-b}), and define $\psi:M_L\to B(\H)$ by
\[
\psi(m)=\sum_{i=1}^n S_i\pi(\langle m_i\,,\, m\rangle_L).
\]
We claim that that $(\psi, \pi)$ is a  representation of $M_L$ on $\H$. For $m,n\in M_L$ and $a\in A$ we have
\[
\psi(m\cdot a)=\sum_{i=1}^n S_i\pi(\langle m_i\,,\, m\cdot a\rangle_L)=\sum_{i=1}^n S_i\pi(\langle m_i\,,\, m\rangle_La)=\psi(m)\pi(a),
\]
\begin{align*}
\psi(m)^*\psi(n)&=\sum_{i,j=1}^n \pi(\langle m_i\,,\, m\rangle_L)^*S_i^*S_j\pi(\langle m_j\,,\, n\rangle_L)\\
&=\sum_{i,j=1}^n \pi(\langle m_i\,,\, m\rangle_L)^*\pi(\langle m_i\,,\,m_j\rangle_L)\pi(\langle m_j\,,\, n\rangle_L)\\
&=\sum_{i,j=1}^n\pi\big(\big\langle m_i\cdot\langle m_i\,,\, m\rangle_L\,,\,m_j\cdot \langle m_j\,,\, n\rangle_L\big\rangle_L\big)\\
&=\pi(\langle m\,,\, n\rangle_L),
\end{align*}
and 
\begin{align*}
\psi(a\cdot m)&=\sum_{i=1}^n S_i\pi(\langle m_i\,,\, a\cdot m\rangle_L)
=\sum_{i=1}^n S_i\pi\Big( \Big\langle m_i\,,\, a\cdot\sum_{j=1}^n m_j\cdot\langle m_j\,,\, m\rangle_L\Big\rangle_L \Big)\\
&=\sum_{i,j=1}^n S_i\pi(\langle m_i\,,\, a\cdot m_j\rangle_L\langle m_j\,,\, m\rangle_L)
=\sum_{i,j=1}^n S_iS_i^*\pi(a)S_j\pi(\langle m_j\,,\, m\rangle_L)\\
&=\Big(\sum_{i=1}^nS_iS_i^*\Big)\Big(\sum_{j=1}^n\pi(a)S_j\pi(\langle m_j\,,\, m\rangle_L)\Big)=\pi(a)\psi(m).
\end{align*}
Thus $(\psi,\pi)$ is a  representation of $M_L$ as claimed. We use the description of $\phi(a)$ in \eqref{LHactionFR} to compute $(\psi,\pi)^{(1)}(\phi(a))$:
\begin{align*}
(\psi,\pi)^{(1)}(\phi(a))
&=(\psi,\pi)^{(1)}\Big(\sum_{i=1}^n  \Theta_{a\cdot m_i, m_i}\Big)
=\sum_{i=1}^n \psi(a\cdot m_i)\psi(m_i)^*\\
&=\sum_{i=1}^n\pi(a)\psi(m_i)\psi(m_i)^*=\pi(a)\Big(\sum_{i=1}^n S_iS_i^*\Big)=\pi(a),
\end{align*}
and so $(\psi,\pi)$ is Cuntz-Pimsner covariant.

The universal property of $\O(M_L)$ gives a representation $\pi\rtimes\psi$ of $A\rtimes_{\alpha,L}\N=\O(M_L)$ such that $(\pi\rtimes\psi)\circ k_A = \pi$ and
\begin{align*}
(\pi\rtimes\psi)(s_i)
&=\psi(m_i)=\sum_{i=1}^nS_j\pi(\langle m_j\,,\,m_i\rangle_L)
=\sum_{i=1}^nS_jS_j^*S_i=S_i.
\end{align*}
Thus $\pi\rtimes\{S_i\}:=\psi\times\pi$ has the required properties, and we have proved (\ref{prop-universal-b}).

For (\ref{prop-universal-c}),  suppose that $A$ is commutative. For $a\in A$ and $q(b)\in q(A)\subset M_L$, we have
\begin{equation}\label{relforcommA}
\alpha(a)\cdot q(b)=q(\alpha(a)b)=q(b\alpha(a))=q(b)\cdot a,
\end{equation}
and this extends by continuity to $\alpha(a)\cdot m=m\cdot a$ for $m\in M_L$. Now for $a\in A$ we have
\begin{align*}
k_A(\alpha(a))&=\sum_{i,j=1}^n s_is_i^*k_A(\alpha(a))s_js_j^* =\sum_{i,j=1}^n s_ik_A(\langle m_i\,,\, \alpha(a)\cdot m_j\rangle_Ls_j^*\\
&=\sum_{i,j=1}^n s_ik_A(\langle m_i\,,\,  m_j\cdot a\rangle_Ls_j^*=\sum_{i,j=1}^n S_ik_A(\langle m_i\,,\,  m_j\rangle_L)k_A(a)s_j^*\\
&=\sum_{i,j=1}^n s_is_i^*s_jk_A(a)s_j^*=\sum_{j=1}^n s_jk_A(a)s_j^*,
\end{align*}
and we have proved part (\ref{prop-universal-c}).
\end{proof}

\begin{remark} To recover Theorem~4.3 of \cite{IM} from Proposition~\ref{prop-universal}, consider the Exel system $(C(X),\alpha,L)$  of Theorem~\ref{IMthm1}, and suppose that $\{m_i\}_{i=1}^n$ is an orthonormal basis for $M_L$. Then taking $a=1$ in \eqref{H1forjS} shows that each $s_i=k_{M_L}(m_i)$ is an isometry, and Proposition~\ref{prop-universal}(\ref{prop-universal-a}) says that $\{s_i\}_{i=1}^n$ is a Cuntz family.  
Equation~\eqref{Staceycovariance} is property (3) of $C^*(G)$ in \cite[Theorem~4.3]{IM}. This immediately implies property (2) in \cite[Theorem~4.3]{IM} (just multiply \eqref{Staceycovariance} on the left by $s_i$); property (1), which concerns the operator $S=k_{M_L}(1)$, follows from \eqref{relforcommA}:
\[
k_A(\alpha(a))S=k_A(\alpha(a))k_{M_L}(q(1))=k_{M_L}(\alpha(a)\cdot q(1))=k_{M_L}(q(1)\cdot a)=Sk_A(a).
\]
\end{remark}

\begin{lemma}\label{lem-basis} Let $X=\T$ and $\sigma:\T\to \T: z\mapsto z^2$, and consider the Exel system $(C(\T),\alpha, L)$ of Theorem~\ref{IMthm1}.  Let $U$ be  a small neighbourhood of $1$ in $\T$. There is an orthonormal basis $\{m_0, m_1\}$  of $M_L$ such that $m_0$ is identically $\sqrt{2}$ on $U$ and $m_1$ is identically $0$ on $U$.  
\end{lemma}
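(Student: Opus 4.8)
The plan is to produce $m_0$ and $m_1$ as $q$ of two explicit continuous functions on $\T$ and then to verify, by direct computation, the two defining properties of an orthonormal basis: that $\langle m_i\,,\,m_j\rangle=\delta_{ij}1$ for $i,j\in\{0,1\}$, and that $\Theta_{m_0,m_0}+\Theta_{m_1,m_1}=1$ in $\L(M_L)$. Throughout I will use that $q(C(\T))$ is dense in $M_L$, that for $z=w^2\in\T$ the two $\sigma$-preimages of $z$ are $w$ and $-w$, so that for $a,b\in C(\T)$
\[
\langle q(a)\,,\,q(b)\rangle(z)=L(\overline a\,b)(z)=\tfrac12\bigl(\overline{a(w)}\,b(w)+\overline{a(-w)}\,b(-w)\bigr),
\]
and that the right action of $C(\T)$ on $M_L$ is $q(a)\cdot b=q\bigl(a\,(b\circ\sigma)\bigr)$.

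First I would construct a continuous $f\colon\T\to[0,1]$ with: (i) $f\equiv1$ on a neighbourhood of $\overline U$; (ii) $f\equiv0$ on a neighbourhood of $-\overline U$ (here I use that $U$ is small enough that $\overline U$ and $-\overline U$ are disjoint); and, crucially, (iii) $f(w)^2+f(-w)^2=1$ for every $w\in\T$. Writing $\T=\R/\Z$ and choosing $\delta\in(0,\tfrac14)$ with $\overline U\subseteq\{e^{2\pi is}:|s|<\delta\}$, I would put $f\equiv1$ on $[-\delta,\delta]$, $f\equiv0$ on $[\tfrac12-\delta,\tfrac12+\delta]$, let $f$ decrease continuously and monotonically from $1$ to $0$ on $[\delta,\tfrac12-\delta]$ (say $f(s)=\cos\bigl(\tfrac{\pi}{2}\cdot\tfrac{s-\delta}{1/2-2\delta}\bigr)$), and then \emph{define} $f$ on the remaining arc by $f(s):=\bigl(1-f(s+\tfrac12)^2\bigr)^{1/2}$ for $s\in[\tfrac12+\delta,1-\delta]$. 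Checking the four seams $s=\pm\delta$ and $s=\tfrac12\pm\delta$ shows $f$ is continuous, and (iii) holds on each of the four arcs because the involution $s\mapsto s+\tfrac12$ interchanges $[\delta,\tfrac12-\delta]$ with $[\tfrac12+\delta,1-\delta]$ and swaps the regions where $f\equiv1$ and $f\equiv0$.

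Next I would set
\[
m_0:=q\bigl(\sqrt2\,f\bigr),\qquad m_1:=q(g)\quad\text{where}\quad g(w):=-\sqrt2\,w\,f(-w),
\]
in which $w$ denotes the identity function $\T\hookrightarrow\C$; both $\sqrt2\,f$ and $g$ are continuous. Since $f\equiv1$ on $U$ and $f(-w)=0$ for $w\in U$ (as then $-w\in-\overline U$), we indeed have $m_0\equiv\sqrt2$ and $m_1\equiv0$ on $U$, as required. For orthonormality, note $g(-w)=\sqrt2\,w\,f(w)$, so for $z=w^2$:
\[
\langle m_0\,,\,m_0\rangle(z)=f(w)^2+f(-w)^2=1,\qquad \langle m_1\,,\,m_1\rangle(z)=|w|^2\bigl(f(-w)^2+f(w)^2\bigr)=1,
\]
while $\langle m_0\,,\,m_1\rangle(z)=\tfrac12\bigl(-2w\,f(w)f(-w)+2w\,f(w)f(-w)\bigr)=0$ and hence also $\langle m_1\,,\,m_0\rangle(z)=0$. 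For the frame relation it suffices, by density of $q(C(\T))$ and boundedness of the $\Theta$'s, to check $\sum_i m_i\cdot\langle m_i\,,\,q(b)\rangle=q(b)$ for $b\in C(\T)$; writing $f_0=\sqrt2\,f$ and $f_1=g$, evaluating the left side at $w$ gives $\tfrac12 b(w)\sum_i|f_i(w)|^2+\tfrac12 b(-w)\sum_i f_i(w)\overline{f_i(-w)}$, and here $\sum_i|f_i(w)|^2=2f(w)^2+2f(-w)^2=2$ and $\sum_i f_i(w)\overline{f_i(-w)}=2f(w)f(-w)-2|w|^2f(w)f(-w)=0$, so the sum equals $b(w)$. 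Thus $\Theta_{m_0,m_0}+\Theta_{m_1,m_1}=1$ and $\{m_0,m_1\}$ is an orthonormal basis of $M_L$.

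I do not expect a serious obstacle. The only point that needs a moment's thought is that one cannot take $m_1$ to be a real-valued (let alone constant) function: the double cover $z\mapsto z^2$ of $\T$ is connected, which forces a twist, and this is exactly why the factor $w$ is inserted in $g$; relatedly, the pointwise condition (iii) for $f$ must be arranged compatibly with the involution $s\mapsto s+\tfrac12$ on the connected cover, which is the purpose of the ``define on one arc, propagate, check the seams'' construction above.
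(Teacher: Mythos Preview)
Your proof is correct and takes essentially the same approach as the paper: both construct $m_0$ as a real-valued quadrature mirror filter (i.e., a function satisfying $|m_0(w)|^2+|m_0(-w)|^2=2$) that is identically $\sqrt{2}$ near $1$, and take $m_1$ to be the associated conjugate mirror filter $m_1(w)=\pm w\,\overline{m_0(-w)}$. The paper names these filters and cites \cite{PR1} for the orthonormality and reconstruction, while you write out the verifications explicitly; your $f$ is just $m_0/\sqrt{2}$ in the paper's notation, and the specific interpolating function (your cosine profile versus the paper's $g$) is irrelevant once the filter condition is arranged.
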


\begin{proof}
A function $m_0$ satisfying 
\begin{equation*}\label{filter}
|m_0(w)|^2+|m_0(-w)|^2=2 \text{\ for $w\in\T$}.
\end{equation*}
 is called a \emph{quadrature mirror filter}. The \emph{conjugate mirror filter} $m_1$, defined by  $m_1(z)=z\overline{m_0(-z)}$, also satisfies \eqref{filter}. It is then straightforward to check that $\{m_0, m_1\}$ is orthonormal and that $m=m_0\cdot\langle m_0\,,\, m\rangle_L+ m_1\cdot\langle m_1\,,\, m\rangle_L$ for all $m\in M_L$ (see, for example, \cite[Theorem~1]{PR1}). Thus  $\{m_0, m_1\}$ is an orthonormal basis. If  $m_0(u)=\sqrt{2}$ for $u\in U$ then \eqref{filter} implies that $m_1(u)=u\overline{m_0(-u)}=0$. So it suffices to construct a suitable $m_0$.
For this, choose a continuous function $g:[-\frac{\pi}{2},\frac{\pi}{2}]\to \R$ such that $g(\theta)=1$ for $\theta$ near $0$ and $g(-\frac{\pi}{2})=g(\frac{\pi}{2})=\frac{1}{\sqrt{2}}$, and for $-\pi\leq \theta\leq \pi$ define
\[
m_0(e^{i\theta})=\begin{cases}
\sqrt{2}\sqrt{1-g(\theta+\pi)^2}&\text{ for $\theta\in[-\pi,-\frac{\pi}{2})$}\\
\sqrt{2}\sqrt{1-g(\theta-\pi)^2}&\text{ for $\theta\in(\frac{\pi}{2},\pi]$}\\
\sqrt{2}g(\theta)				&\text{ for $\theta\in [-\frac{\pi}{2},\frac{\pi}{2}]$}.
\end{cases}\qedhere
\]
\end{proof}

We will use the next lemma in  Example~\ref{IM-qu} to obtain a contradiction when we assume there that a particular $C^*(G)$ is a Stacey crossed product of multiplicity $n$.

\begin{lemma}\label{lem-notStacey}
Let $(A,\alpha,L)$ be an Exel system in which $A$ is unital and commutative, and $M_L$ has an orthonormal basis $\{m_i\}_{i=1}^n$.  Assume that $(\O_{M_L},\{S_i\}):=(\O_{M_L},\{k_M(m_i)\})$ is a Stacey crossed product of multiplicity $n$ for $(A,\alpha)$. Let $\{n_i\}_{i=1}^n$ be another orthonormal basis for $M_L$.   Then 
\begin{equation}\label{equalcoeff}
\langle n_i\,,\, a\cdot n_j\rangle_L=\langle m_i\,,\, a\cdot m_j\rangle_L\quad\text{for $a\in A$.}
\end{equation}
\end{lemma}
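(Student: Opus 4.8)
The plan is to exploit the universal property of the Stacey multiplicity-$n$ crossed product to build an endomorphism of $\O_{M_L}$ that transports the frame $\{m_i\}$ to $\{n_i\}$, and then to read off \eqref{equalcoeff} by applying that endomorphism to the relation \eqref{H1forjS}. First I would note that the second orthonormal basis $\{n_i\}_{i=1}^n$ is in particular a Parseval frame for $M_L$, so Proposition~\ref{prop-universal} applies to it verbatim. Writing $s_i':=k_{M_L}(n_i)$, part (a) gives $\sum_{i=1}^n s_i'(s_i')^*=1$ together with the analogue $s_i'^*k_A(a)s_j'=k_A(\langle n_i\,,\,a\cdot n_j\rangle_L)$ of \eqref{H1forjS}; taking $a=1$ here and using that $\{n_i\}$ is orthonormal shows the $s_i'$ are isometries with mutually orthogonal ranges. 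Since $A$ is commutative, part (c) gives the covariance relation $k_A(\alpha(a))=\sum_{j=1}^n s_j'k_A(a)s_j'^*$ (the analogue of \eqref{Staceycovariance}). Hence $(k_A,\{s_i'\})$ is again a multiplicity-$n$ Stacey-covariant representation of $(A,\alpha)$, this time realised inside the very same algebra $\O_{M_L}$.

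Next I would invoke the hypothesis. By assumption $(\O_{M_L},\{S_i\})$ --- with $k_A$ serving as the embedding of $A$ --- is the universal Stacey multiplicity-$n$ crossed product of $(A,\alpha)$, so the covariant representation $(k_A,\{s_i'\})$ induces an endomorphism $\Phi$ of $\O_{M_L}$ with $\Phi\circ k_A=k_A$ and $\Phi(S_i)=s_i'=k_{M_L}(n_i)$ for $1\le i\le n$ (it is unital because $A$ is). Applying the $*$-homomorphism $\Phi$ to \eqref{H1forjS}, and using $\Phi\circ k_A=k_A$ on one side together with the $\{n_i\}$-analogue of \eqref{H1forjS} on the other, gives, for every $a\in A$,
\begin{align*}
k_A\big(\langle m_i\,,\,a\cdot m_j\rangle_L\big)&=\Phi\big(k_A(\langle m_i\,,\,a\cdot m_j\rangle_L)\big)=\Phi\big(S_i^*k_A(a)S_j\big)\\
&=k_{M_L}(n_i)^*k_A(a)k_{M_L}(n_j)=k_A\big(\langle n_i\,,\,a\cdot n_j\rangle_L\big).
\end{align*}

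Finally I would cancel $k_A$. Since $k_A\colon A\to\O_{M_L}$ is injective --- which is part and parcel of $(\O_{M_L},\{S_i\})$ being the Stacey multiplicity-$n$ crossed product, the canonical copy of $A$ being faithful there --- this yields \eqref{equalcoeff}. I expect this last step to be the one requiring care: the mere existence of an orthonormal basis for $M_L$ does not by itself force $k_A$ to be injective (that is equivalent to $\phi\colon A\to\L(M_L)$ being injective, i.e.\ to $L$ being almost faithful on $A$), so the injectivity must be drawn out of the standing hypothesis that we are genuinely dealing with the universal Stacey crossed product; everything preceding it is bookkeeping with Proposition~\ref{prop-universal} and the universal property. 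In the concrete situation of Example~\ref{IM-qu}, where $A=C(\T)$ and $L$ averages over the two preimages under $z\mapsto z^2$, $L$ is faithful, so $\phi$ is injective and $k_A$ is injective by \cite[Proposition~2.1]{MS} in any case.
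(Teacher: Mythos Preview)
Your proof is correct and follows essentially the same route as the paper's: both apply Proposition~\ref{prop-universal} to the second basis to see that $(k_A,\{k_{M_L}(n_i)\})$ is a multiplicity-$n$ Stacey-covariant representation, invoke the universal property to obtain a homomorphism (called $\rho$ in the paper, $\Phi$ by you) fixing $k_A(A)$ and sending $S_i\mapsto k_{M_L}(n_i)$, and then apply it to \eqref{H1forjS}. If anything you are more scrupulous than the paper, which passes from $k_A(\langle m_i,a\cdot m_j\rangle_L)=k_A(\langle n_i,a\cdot n_j\rangle_L)$ to \eqref{equalcoeff} without explicitly invoking injectivity of $k_A$.
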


\begin{proof}
Since $\{n_i\}$ is orthonormal, we have 
\[
k_{M_L}(n_i)^*k_{M_L}(n_i)^*=k_A(\langle n_i,n_i\rangle)=k_A(1_A)=1_{\O(M_L)}.
\]
Thus  Proposition~\ref{prop-universal} says that $\{T_i:=k_M(n_i)\}$ is a Cuntz family in $\O_{M_L}$ such that
\begin{equation}\label{H1forT}
T_i^*k_A(a)T_j=k_A(\langle n_i\,,\, a\cdot n_j\rangle_L)\quad\text{and}\quad
k_A(\alpha(a))=\sum_{i=1}^n T_ik_A(a)T_i^* \quad\text{for $a\in A$.}
\end{equation}
Since  $(\O_{M_L},\{S_i\})$ is by assumption a Stacey crossed product of multiplicity $n$, there is a homomorphism $\rho:\O_{M_L}\to \O_{M_L}$ such that $\rho\circ k_A=k_A$ and $\rho(S_i)=T_i$. Applying $\rho$ to
\eqref{H1forjS} implies that
\[
T_i^*k_A(a)T_j=k_A(\langle m_i\,,\, a\cdot m_j\rangle_L)\quad\text{for $a\in A$,}
\]
which in view of \eqref{H1forT} gives \eqref{equalcoeff}.
\end{proof}

Equation \eqref{equalcoeff} holds, for example, if $a=\alpha(b)$: then
\begin{align*}
\langle n_i\,,\, a\cdot n_j\rangle_L=\langle n_i\,,\, n_j\cdot b\rangle_L
&=\langle n_i\,,\, n_j\rangle_Lb
=\langle m_i\,,\, m_j\rangle_Lb=\langle m_i\,,\, a\cdot m_j\rangle_L.
\end{align*}
So to find somewhere it does not hold, we need to look at elements $a$ which are not in the range of $\alpha$.

\begin{example}\label{IM-qu}
Let  $A=C(\T)$, $\alpha(f)(z)=f(z^2)$ and $L(f)(z)=\frac{1}{2}(f(w)+f(-w))$ where $w^2=z$. Let $U$ be a neighbourhood of $1$ in $\T$ such that $z\mapsto z^2$ is injective on $U$, and let $\{m_0, m_1\}$ be the orthonormal basis from Lemma~\ref{lem-basis}. Let $f$ be a nonzero  function in $C(\T)$ with support in $U$. Then  $f\cdot m_0=\sqrt{2}f$ and $f\cdot m_1=0$. We immediately have $\langle m_1,f\cdot m_1\rangle_L=0$. Both summands in
\[
\langle m_0,f\cdot m_0\rangle_L(z)=\frac{1}{2}\big(\overline{m_0(w)}\sqrt{2}f(w)+\overline{m_0(-w)}\sqrt{2}f(-w)\big)\ \text{ where $w^2=z$}
\]
vanish unless $z=w^2=(-w)^2$ is in $U^2$; for such $z$, we can choose the square root $w$ in $U$, and then $f(-w)=0$, so
\[
\langle m_0,f\cdot m_0\rangle_L(z)=
\begin{cases}
f(w)&\text{ where $w\in U$ and $w^2=z$}\\
0&\text{ if $z\notin U^2$.}
\end{cases}
\]
Now to see that \eqref{equalcoeff} does not hold, note that $\{n_0,n_1\}:=\{m_1,m_0\}$ is also an orthonormal basis for $M_L$, and $\langle n_0,f\cdot n_0\rangle_L=\langle m_1,f\cdot m_1\rangle_L=0$ is not the same as $\langle m_0,f\cdot m_0\rangle_L$.
By Lemma~\ref{lem-notStacey},  $(\O(M_L),\{S_i\}):=(\O(M_L),\{k_M(m_i)\})$ is not the Stacey multiplicity-$n$ crossed product. Thus $C^*(G)\cong \O(M_L)$ is not isomorphic to this crossed product either. 
\end{example}


\end{document}